\newcommand{\R}{\mathbb{R}}
\newcommand{\bmtx}{\begin{bmatrix}}
\newcommand{\emtx}{\end{bmatrix}}
\newcommand{\bsmtx}{\left[ \begin{smallmatrix}} 
\newcommand{\esmtx}{\end{smallmatrix} \right]} 
\newcommand{\bmatarray}[1]{\left[\begin{array}{#1}}
\newcommand{\ematarray}{\end{array}\right]}
\newcommand{\norm}[1]{\left\lVert#1\right\rVert}
\newcommand{\bmat}[1]{\begin{bmatrix} #1 \end{bmatrix}}
\newcommand{\tp}{T}
\newtheorem{theorem}{Theorem}[section]
\newtheorem{lemma}[theorem]{Lemma}
\newtheorem{remark}{Remark}[section]
\newtheorem{proposition}{Proposition}[section]
\DeclareMathOperator{\diag}{diag}
\DeclareMathOperator*{\minimize}{minimize}
\DeclareMathOperator*{\argmin}{arg min}
\colorlet{texcscolor}{blue!50!black}
\colorlet{texemcolor}{red!70!black}
\colorlet{texpreamble}{red!70!black}
\colorlet{codebackground}{black!25!white!25}
\title{Bounds for the Tracking Error and Dynamic Regret of \\Inexact Online Optimization Methods: A General Analysis via Sequential Semidefinite Programs
\thanks{\textbf{Funding}: The work of Usman Syed and Bin Hu was supported
by the NSF Award CAREER under Grant 2048168.}
}
\author{
Usman Syed\thanks{U. Syed is with the Coordinated Science Laboratory and the Department of Electrical and Computer Engineering, University of Illinois at Urbana--Champaign, Email: usyed3@illinois.edu}\and
Emiliano Dall'Anese\thanks{E. Dall'Anese is with the Department of Electrical, Computer and Energy Engineering, University of Colorado Boulder, Email: emiliano.dallanese@colorado.edu}\and
  Bin Hu\thanks{B.~Hu is with the Coordinated Science Laboratory and the Department of Electrical and Computer Engineering, University of Illinois at Urbana--Champaign, Email: binhu7@illinois.edu} 
}
\begin{document}
\maketitle

%% ------------------------------------------------------------------
%% ABSTRACT
%% ------------------------------------------------------------------

\begin{abstract}
In this paper, we develop a unified framework for analyzing 
 the tracking error  and dynamic regret of   inexact online optimization methods under a variety of settings. Specifically, we leverage the quadratic constraint approach from control theory to formulate sequential semidefinite programs (SDPs) whose feasible points naturally correspond to  tracking error bounds 
of various inexact online optimization methods including  the inexact online gradient descent (OGD) method, the online gradient descent-ascent method, the online stochastic gradient method, and the inexact proximal online gradient method. We provide exact analytical solutions for our proposed sequential SDPs, and obtain fine-grained tracking error bounds for  the online algorithms studied in this paper. We also provide a simple routine to convert the obtained tracking error bounds into dynamic regret bounds. The main novelty of our analysis is that we derive exact analytical solutions for our proposed sequential SDPs under various inexact oracle assumptions in a unified manner.
\end{abstract}

\section{Introduction}
\label{sec:intro}

The connections between optimization and control theory are deep and fundamental. 
There is a recent trend of research tailoring the quadratic constraint approach from control theory \cite{Megretski1997} for unified analysis and computer-aided design of iterative optimization algorithms~\cite{Lessard2014, nishihara2015,hu17a,fazlyab2017analysis,sundararajan2017robust,hu17b,van2017fastest,aybat2018robust,cyrus2018robust,hu2018dissipativity,dhingra2018proximal,han2019systematic,badithela2019analysis,seidman2019control, xiong2020analytical,mohammadi2020robustness,sundararajan2020analysis,scherer2021convex,hassan2021proximal,hu2021analysis,gannot2021frequency,datar2022robust}. 
In these works, Lyapunov arguments have been combined with quadratic constraints to derive small semidefinite programs (SDPs) whose feasible points correspond to convergence or robustness guarantees of iterative optimization algorithms (see \cite{lessard2022analysis} for a recent survey). 
An advantage of such a SDP-based approach is that the resultant SDP conditions can be typically modified in a modularized manner to address different assumptions on the gradient oracle (e.g. being noisy~\cite{aybat2018robust} or inexact~\cite{Lessard2014,hu2021analysis,gannot2021frequency}).
In the setting where the objective function is stationary and does not change over time, the quadratic constraint approach has led to novel SDP conditions that naturally
cover the complexity analysis of 
momentum-based accelerated methods~\cite{Lessard2014,hu17a,fazlyab2017analysis,van2017fastest,cyrus2018robust,badithela2019analysis,xiong2020analytical,sundararajan2020analysis,scherer2021convex}, proximal/operator-splitting methods~\cite{nishihara2015,dhingra2018proximal,seidman2019control,hassan2021proximal}, distributed gradient-based methods~\cite{sundararajan2017robust,han2019systematic,sundararajan2020analysis,datar2022robust}, and stochastic finite-sum methods~\cite{hu17b,hu2018dissipativity}.
 However, in many practical settings, the objective function is changing over time, and  there is a significant gap between the above quadratic constraint framework and the current theory of time-varying online optimization~\cite{simonetto2020time,hazan2016introduction,shalev2011online,dixit2019online,madden2020bounds}. In this paper, we bridge this gap via developing a sequential SDP approach streamlining the analysis  of inexact online optimization methods.
 
%  There is also a deep connection between dynamic regret and tracking error\cite{dixit2019online}.

 For illustrative purposes, consider time-varying cost functions $f_t:\R^p \to \R$, where $t\in \{0,1,2\cdots\}$ is the time index. The popular online gradient descent (OGD) method iterates as
\begin{align}
\label{eq:OGD}
x_{t+1}=x_t-\alpha_t \nabla f_t(x_t), 
\end{align}
where at each time step, the decision variable is updated using the gradient of the objective function at time $t$.
Such algorithms take full advantage of the available streaming data and can be used to exploit the time-varying nature of  dynamic environments. 
Noticeably, regret and tracking error have been used as two main performance metrics for online optimization algorithms.  Regret has been extensively studied in the online learning literature and provides a precise characterization of the exploration-exploitation trade-off in decision making tasks including online portfolio selection and web ranking~\cite{hazan2016introduction,shalev2011online}, while tracking error is a popular metric quantifying the tracking capability of online algorithms in applications related to target tracking, signal detection, and path estimation~\cite{simonetto2020time,dixit2019online,madden2020bounds,fazlyab2017prediction}.  Both metrics provide complementary benefits and are vital for understanding the performance of online optimization methods.

%   for a large family of online algorithms including OGD, Inexact OGD, Stochastic OGD, and inexact proximal OGD (IP-OGD). 

In this paper, we are particularly interested in analyzing the tracking error and dynamic regret of inexact online optimization methods. 
Currently, tracking error analysis for online optimization methods are still performed in a case-by-case manner, with the help of deep expert insights~\cite{simonetto2020time,dixit2019online,madden2020bounds,fazlyab2017prediction}. Such analysis may not be generalizable when the underlying assumptions on the oracle are changed. For example, the tracking error analysis of OGD in \cite{madden2020bounds} cannot be directly applied to address the case where the online gradient oracle is subject to some relative error. 
Therefore, it is beneficial to develop more coherent techniques for such analysis.
In this paper, we present a unified SDP-based analysis which covers a large family of inexact online optimization methods in both the deterministic and stochastic settings. Our contributions are summarized as follows.
\begin{enumerate}
    \item We leverage control theory to formulate sequential SDP conditions whose feasible points naturally correspond to upper bounds for tracking error\footnote{Suppose $x_t^*$ is the global minimum for $f_t$, and $\{x_t\}$ is generated by some online algorithm. In this paper, the tracking error is measured as $\norm{x_t-x_t^*}^2$ in the deterministic setting or $\mathbb{E}\norm{x_t-x_t^*}^2$ in the stochastic setting.} of various inexact online optimization methods including inexact OGD, stochastic OGD, and the inexact proximal online gradient descent (IP-OGD) method.  
    \item We present exact analytical solutions to our proposed sequential SDPs, leading to the sharpest tracking error bounds that can be obtained via our proposed SDPs. These new tracking error bounds provide insights for understanding the performance of online optimization methods in the presence of oracle inexactness. In addition, our analysis leads to the first bound on the mean-square tracking error of IP-OGD, complementing the origial $L_1$ error bounds presented in \cite{dixit2019online}.
    \item We provide a simple routine converting our tracking error bounds into dynamic regret bounds. This leads to new dynamic regret bounds for the inexact online algorithms studied in this paper.
\end{enumerate}
\emph{Related work.}
Previously, quadratic constraints have been leveraged to formulate sequential SDPs for analyzing biased stochastic gradient descent (SGD) with inexact oracle \cite{hu2021analysis}. 
 On the conceptual level, our sequential SDP approach can be viewed as an extension of the work in \cite{hu2021analysis} to the online setting. 
 However, a unique novelty of our paper is that we obtain exact analytical solutions for our proposed sequential SDP conditions via rigorous algebraic arguments.
 In contrast, 
  the sequential SDPs  in \cite{hu2021analysis} have not been exactly solved in an analytical manner. Specifically,
  the biased SGD bounds in \cite{hu2021analysis} are obtained via choosing a particular feasible point under the guidance of numerical simulations, and it remains unclear whether the chosen feasible point gives the optimal solution for the original sequential SDPs or not (see \cite[Section 2.2]{hu2021analysis} for a detailed discussion). Therefore, the exact SDP solutions obtained in this paper complement the original analysis in \cite{hu2021analysis} and further strengthen our understanding of the quadratic constraint approach. 
 Another related line of work
that uses SDPs to analyze optimization methods is built upon the formulation of the performance estimation problem
(PEP) \cite{drori2014,taylor2017,taylor2017exact}. 
When the objective function is stationary, the PEP framework can be applied to obtain tight upper and lower bounds for worst-case performance of first-order optimization methods under various settings~\cite{taylor2017,taylor2017exact,ryu2020operator,de2017worst,taylor19a,dragomir2022optimal,kim2016,kim2018generalizing,kim2018another}. 
Currently, the PEP framework has not been adapted to the online setting. 
Given the fundamental connection between the
quadratic constraint approach and the PEP framework \cite{taylor18a}, it is possible to extend the PEP framework for tight tracking error analysis of online optimization methods. This is an interesting future task.

\subsection{Notation}
The $p\times p$ identity matrix and the $p \times p$ zero matrix are denoted as $I_p$ and $0_p$, respectively. The Kronecker product of two matrices $A$ and $B$ is denoted by $A \otimes B$. When a matrix $P$ is negative semidefinite (definite), we will use the notation $P\preceq (\prec) 0$.
A differentiable function $f:\R^p\rightarrow \R$ is $L$-smooth if $\|\nabla f(x)-\nabla f(y)\|\le L \|x-y\|$ for all $x, y\in \R^p$ and
is $m$-strongly convex if
$f(x)\ge f(y)+\nabla f(y)^T (x-y)+\frac{m}{2} \|x-y\|^2$ for all $x,y \in \R^p$.
Let $\mathcal{F}(m,L)$ denote the set of differentiable
functions $f: \R^p \rightarrow \R$  that are $L$-smooth and $m$-strongly convex. 
For any $f \in \mathcal{F}(m,L)$ with $m>0$, there exists a unique $x^*$ satisfying $\nabla f(x^*)=0$, and the following inequality holds for any $x\in \R^p$ \cite[Proposition 5]{Lessard2014}:
\begin{align}
\label{eq:gradient3}
\bmat{ x-x^* \\ \nabla f(x) }^T \bmat{ -2mL I_p & (L+m) I_p \\  (L+m) I_p & -2 I_p} 
\bmat{ x-x^* \\ \nabla f(x) } \ge 0.
\end{align}
However, a function satisfying the above inequality may not be convex. The set of differentiable functions satisfying \eqref{eq:gradient3} with some unique global minimum $x^*$  is denoted as $\mathcal{S}(m,L)$. This class of functions has sector-bounded gradients.  The condition \eqref{eq:gradient3} is closely related to  the regularity condition in the signal processing literature~\cite[Appendix A]{xiong2020analytical}.
Obviously, we have $\mathcal{F}(m,L)\subset \mathcal{S}(m,L)$.

\subsection{Problem setting} In this paper, we consider time-varying objective functions $f_t:\R^p\rightarrow \R$, where we recall that $t$ is the time index. Depending on the algorithm to be analyzed, we will assume either $f_t\in\mathcal{F}(m_t, L_t)$ or $f_t\in \mathcal{S}(m_t, L_t)$. The global minimum of the objective function at time $t$ is denoted as $x_t^*$. We further assume that the sequence $\{\sigma_t\}$ satisfies the following inequality for all $t$:
\begin{align}
\norm{x_{t+1}^*-x_t^*}\le \sigma_t.
\end{align}
Here $\{\sigma_t\}$ can be viewed as prescribed bounds characterizing how fast the global minimum changes over time. 
Finally, for ease of exposition, we adopt the following notation:
\begin{align}\label{Eq:Mu_def}
\begin{split}
  Y_t& := \bmat{2L_t m_t & -\left(L_t +m_t\right)\\ -\left(L_t +m_t\right) &  2}, \,\,\,\,\,\,M_t=\bmat{2m_t & -1 \\-1 & 0},\\
  \mu_t& :=  \left\{
    \begin{array}{ll}
    1-m_t\alpha_t  &  \mbox{if } 0\le \alpha_t \le \frac{2}{m_t+L_t}\\
     \alpha_t L_t-1 &  \mbox{if } \frac{2}{m_t+L_t}\le \alpha_t \le \frac{2}{L_t}
    \end{array}
  \right.
  \end{split}
\end{align}
where $\alpha_t$ is the stepsize for the underlying online optimization method to be analyzed. 

\section{Preliminaries: A sequential SDP framework for algorithm analysis}\label{sec:prelim}
Built upon the work in \cite{hu2021analysis}, we first present a general routine that can be used to formulate sequential SDPs for tracking error analysis of online optimization algorithms. For ease of exposition, we adopt the dissipativity perspective on the quadratic constraint approach from \cite{hu17a}.
First, we introduce the following discrete-time dynamical system in the linear state-space form:
\begin{align}
\label{eq:gen_algo}
\xi_{t+1}=\xi_t +\left( B_t  \otimes I_p \right) w_t,
\end{align}
where $\xi_t\in \R^p$ is the state, $B_t\in \R^{1\times q}$ is some input matrix, and $w_t\in \R^{n_w}$ is the input. To make the dimensions compatible, we have $n_w=pq$. We can
interpret $w_t$ as a driving force, and a key concept from dissipativity theory is the \textit{supply rate} condition which
characterizes the energy change in the state $\xi_t$ 
due to the input $w_t$. In this paper, we consider the supply rate condition in the quadratic constraint form:
 \begin{align}
\label{eq:gen_ineq}
\bmat{ \xi_t \\ w_t }^T (X_t^{(j)}\otimes I_p) \bmat{ \xi_t \\ w_t } \le \Lambda_t^{(j)},
\end{align}
where $X_t^{(j)}  \in \R^{(1+q) \times (1+q)}$ and $\Lambda_t^{(j)} \in \R$ 
 are pre-specified for $j=1, 2,\ldots, J$.  The left side of \eqref{eq:gen_ineq} can be viewed as the energy supplied from the driving force~$w_t$ to the internal state $\xi_t$, and $\Lambda_t^{(j)}$ just provides an upper bound for the supplied energy. 
 Later we will show that many online optimization algorithms can be recast in the form of \eqref{eq:gen_algo} (with $\xi_t$ being the tracking error at $t$) subject to some supply rate condition~\eqref{eq:gen_ineq}. The tracking error analysis reduces to finding tight upper bounds on $\norm{\xi_t}^2$ (in the deterministic setting) or $\mathbb{E}\norm{\xi_t}^2$ (in the stochastic setting).
Our analysis relies on the following standard result from dissipativity theory~\cite{willems72a,hu17a}.
\begin{proposition}\label{prop:main}
Consider \eqref{eq:gen_algo} subject to the supply rate condition~\eqref{eq:gen_ineq}.
 If there exist non-negative scalars $\{\lambda_t^{(j)}\}_{j=1}^J$ and $\rho_t$ such that the following condition~holds
\begin{equation}\label{eq:LMI1}
\bmat{1-\rho_t^2 & B_t \\ B_t^T & B_t^TB_t} \preceq \sum_{j=1}^{J} \lambda_t^{(j)} X_t^{(j)},
\end{equation} 
then the iterations of \eqref{eq:gen_algo} satisfy
$\| \xi_{t+1} \|^2 \le \rho_t^2 \|\xi_t \|^2 + \sum_{j=1}^J \lambda_t^{(j)} \Lambda_t^{(j)}$.
In addition, we can obtain the following bound if \eqref{eq:LMI1} holds for all $t$:
\begin{align}
\label{eq:error}
\norm{\xi_t}^2\le  \left(\prod_{k=0}^{t-1} \rho_k^2 \right) \norm{\xi_0}^2+\sum_{k=0}^{t-1} \left(\left(\prod_{l=k+1}^{t-1}\rho_l^2\right)\left(\sum_{j=1}^J \lambda_k^{(j)} \Lambda_k^{(j)}\right)\right).
\end{align}
\end{proposition}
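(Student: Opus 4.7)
The plan is to establish the one-step inequality
$\|\xi_{t+1}\|^2 \le \rho_t^2\|\xi_t\|^2 + \sum_j \lambda_t^{(j)}\Lambda_t^{(j)}$
directly from the dynamics \eqref{eq:gen_algo} and the LMI \eqref{eq:LMI1}, and then to iterate this recursion to obtain \eqref{eq:error}.

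\textbf{Step 1: Rewrite $\|\xi_{t+1}\|^2 - \rho_t^2\|\xi_t\|^2$ as a quadratic form.}
I would expand $\|\xi_{t+1}\|^2 = \|\xi_t + (B_t\otimes I_p)w_t\|^2$ directly, yielding
$$\|\xi_{t+1}\|^2 = \|\xi_t\|^2 + 2\xi_t^T(B_t\otimes I_p)w_t + w_t^T(B_t^TB_t\otimes I_p)w_t.$$
Subtracting $\rho_t^2\|\xi_t\|^2$ and recognizing the right-hand side as a block quadratic form, I get
$$\|\xi_{t+1}\|^2 - \rho_t^2\|\xi_t\|^2 = \bmat{\xi_t \\ w_t}^T \left(\bmat{1-\rho_t^2 & B_t \\ B_t^T & B_t^TB_t}\otimes I_p\right)\bmat{\xi_t \\ w_t}.$$
The key observation here is that the $(1+q)\times(1+q)$ matrix on the left side of \eqref{eq:LMI1} is exactly the one arising from this expansion after tensoring with $I_p$.

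\textbf{Step 2: Apply the LMI hypothesis.}
The LMI \eqref{eq:LMI1} is preserved under Kronecker product with $I_p$ (positive semidefiniteness is preserved by this operation). Conjugating by $[\xi_t^T, w_t^T]^T$ on both sides, combined with the supply rate condition \eqref{eq:gen_ineq} and the non-negativity of $\lambda_t^{(j)}$, produces
$$\|\xi_{t+1}\|^2 - \rho_t^2\|\xi_t\|^2 \le \sum_{j=1}^J \lambda_t^{(j)} \bmat{\xi_t \\ w_t}^T (X_t^{(j)}\otimes I_p)\bmat{\xi_t \\ w_t} \le \sum_{j=1}^J \lambda_t^{(j)}\Lambda_t^{(j)},$$
which is precisely the claimed one-step bound.

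\textbf{Step 3: Iterate to obtain \eqref{eq:error}.}
I would proceed by induction on $t$. The base case $t=0$ is trivial. For the inductive step, applying the one-step bound at index $t-1$ gives $\|\xi_t\|^2 \le \rho_{t-1}^2\|\xi_{t-1}\|^2 + \sum_j \lambda_{t-1}^{(j)}\Lambda_{t-1}^{(j)}$, and substituting the inductive hypothesis for $\|\xi_{t-1}\|^2$ produces the product-sum structure in \eqref{eq:error}. This step is purely bookkeeping of the telescoping products $\prod_k \rho_k^2$.

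\textbf{Main obstacle.}
There is no genuine analytical difficulty; the result is a standard Lyapunov/dissipativity argument. The only care required is in Step 1, where one must cleanly track the Kronecker structure and verify that an inequality between $(1+q)\times(1+q)$ matrices, once tensored with $I_p$, continues to bound the relevant $(p+pq)$-dimensional quadratic form. Once this correspondence is made explicit, the remainder of the proof is routine algebra.
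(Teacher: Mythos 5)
Your proposal is correct and follows essentially the same route as the paper's own proof: expand $\|\xi_{t+1}\|^2-\rho_t^2\|\xi_t\|^2$ as the block quadratic form in $(\xi_t,w_t)$, invoke the LMI \eqref{eq:LMI1} tensored with $I_p$ together with the supply rates and the non-negativity of the multipliers to get the one-step bound, and then unroll by induction. The paper does the identical computation (its equation \eqref{eq:step1} is your Step 2), so there is nothing to add.
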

\begin{proof} For completeness, we briefly sketch the proof.
From \eqref{eq:LMI1}, we directly have
\begin{align}
\label{eq:step1}
 \bmat{ \xi_t \\ w_t }^T \left( \bmat{1-\rho_t^2 & B_t \\ B_t^T & B_t^TB_t} \otimes I_p \right) \bmat{ \xi_t \\ w_t }  \le \sum_{j=1}^{J} \lambda_t^{(j)} \bmat{ \xi_t \\ w_t }^T \left( X_t^{(j)} \otimes I_p \right) \bmat{ \xi_t \\ w_t }.
\end{align}
From \eqref{eq:gen_algo}, it is easy to verify that the left side of \eqref{eq:step1} is equal to $\norm{\xi_{t+1}}^2-\rho_t^2 \norm{\xi_t}^2$. 
Hence we can use the supply rate condition  \eqref{eq:gen_ineq} and the non-negativity of $\{\lambda_t^{(j)}\}_{j=1}^J$ to prove $\| \xi_{t+1} \|^2 \le \rho_t^2 \|\xi_t \|^2 + \sum_{j=1}^J \lambda_t^{(j)} \Lambda_t^{(j)}$. Then we can use induction to show $\norm{\xi_t}^2\le  \left(\prod_{k=0}^{t-1} \rho_k^2 \right) \norm{\xi_0}^2+C_t$
where $\{C_t\}$ is calculated from the recursion $C_{t+1}=\rho_t^2 C_t+\sum_{j=1}^J \lambda_t^{(j)} \Lambda_t^{(j)}$ initialized at $C_0=0$.
It is straightforward to verify that $C_t$ is equal to the second term on the right side of~\eqref{eq:error}. This completes the proof.
\end{proof}

  As we will show later, Proposition \ref{prop:main} can be used to obtain various tracking error bounds for online optimization methods. 
  Fixing $B_t$ and $\{X_t^{(j)}\}$, the condition \eqref{eq:LMI1} is linear in $\rho_t^2$ and $\lambda_t^{(j)}$, and hence becomes a linear matrix inequality (LMI) whose feasibility can be verified using SDP solvers. 
  Let $\mathcal{T}_t \subseteq \R_+^{J+1}$ denote the set of tuples $\left( \rho_t, \lambda_t^{(1)}, \hdots , \lambda_t^{(J)} \right) $ that satisfy the LMI condition \eqref{eq:LMI1}. 
Based on \eqref{eq:error}, any feasible point in the Cartesian product $\prod_{k=0}^{t-1}\mathcal{T}_k$ will lead to an upper bound on $\norm{\xi_t}^2$.  A key issue in computing useful
tracking error bounds  is how to choose $\{\rho_t\}$ and $\{\lambda_t^{(j)}\}$ such that the right side of \eqref{eq:error} is minimized. The best bound that can be obtained for $\norm{\xi_t}^2$ via the LMI \eqref{eq:error} should be the solution of the following minimization problem:
\begin{align}
\label{eq:opt1}
\minimize \limits_{(\rho_k, \lambda_k^{(j)})\in \mathcal{T}_k, 0\le k\le t-1} \quad \left(\prod_{k=0}^{t-1} \rho_k^2 \right) \norm{\xi_0}^2+\sum_{k=0}^{t-1} \left(\left(\prod_{l=k+1}^{t-1}\rho_l^2\right)\left(\sum_{j=1}^J \lambda_k^{(j)} \Lambda_k^{(j)}\right)\right).
\end{align}
By applying the argument in \cite[Section 2.2]{hu2021analysis}, one can show that the above problem can be exactly solved using a sequential SDP approach which is formalized below.

\begin{proposition}
\label{Prop:Greedy_app}
Denote the global optimal solution for \eqref{eq:opt1} as $\hat{U}_t$. Set $\hat{U}_0=U_0=\norm{\xi_0}^2$. Then the sequence~$\{\hat{U}_t\}$ satisfies the following recursion
 \begin{align}
 \label{Eq:Optimal_U}
 \hat{U}_{t+1}:=  \minimize_{(\rho_t, \lambda_t^{(j)}) \in \mathcal{T}_t }\quad \left(\rho_t^2 \hat{U}_t + \sum_{j=1}^{J} \lambda_t^{(j)} \Lambda_t^{(j)}\right).
 \end{align}
\end{proposition}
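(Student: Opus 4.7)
The plan is to prove the claim by a standard dynamic programming decomposition of the objective in \eqref{eq:opt1}. The key structural observation is that when we write out $\hat{U}_{t+1}$ by substituting $t+1$ into \eqref{eq:opt1}, the variables at time index $t$ enter the cost in a way that is affine in the earlier variables after one simple factoring, and the constraint set is a Cartesian product $\prod_{k=0}^{t}\mathcal{T}_k = \mathcal{T}_t \times \prod_{k=0}^{t-1}\mathcal{T}_k$. Together these let us split the joint minimization into a nested one without loss.

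First I would write the cost for $\hat{U}_{t+1}$ explicitly and isolate the $k=t$ summand, using the empty-product convention $\prod_{l=t+1}^{t}\rho_l^2=1$, so that
\begin{align*}
\left(\prod_{k=0}^{t}\rho_k^2\right)\|\xi_0\|^2 &+ \sum_{k=0}^{t}\left(\prod_{l=k+1}^{t}\rho_l^2\right)\left(\sum_{j=1}^{J}\lambda_k^{(j)}\Lambda_k^{(j)}\right) \\
&= \rho_t^2\!\left[\left(\prod_{k=0}^{t-1}\rho_k^2\right)\|\xi_0\|^2 + \sum_{k=0}^{t-1}\left(\prod_{l=k+1}^{t-1}\rho_l^2\right)\left(\sum_{j=1}^{J}\lambda_k^{(j)}\Lambda_k^{(j)}\right)\right] + \sum_{j=1}^{J}\lambda_t^{(j)}\Lambda_t^{(j)}.
\end{align*}
The bracketed quantity is precisely the objective of \eqref{eq:opt1} at index $t$, as a function of the earlier variables $\{(\rho_k,\lambda_k^{(j)})\}_{k=0}^{t-1}$.

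Next I would exploit the fact that the variables at time $t$ and the variables at times $k<t$ appear in separate constraint sets and that the bracketed expression is multiplied by the non-negative scalar $\rho_t^2$. Because $\rho_t\ge 0$ (being part of a feasible tuple in $\mathcal{T}_t\subseteq \R_+^{J+1}$) and because $\sum_{j}\lambda_t^{(j)}\Lambda_t^{(j)}$ does not depend on the earlier variables, the joint infimum factors as
\begin{align*}
\hat{U}_{t+1} \;=\; \inf_{(\rho_t,\lambda_t^{(j)})\in\mathcal{T}_t}\left(\rho_t^2 \cdot \!\!\inf_{(\rho_k,\lambda_k^{(j)})\in\mathcal{T}_k,\,0\le k\le t-1}\!\!\left[\cdots\right] \;+\; \sum_{j=1}^{J}\lambda_t^{(j)}\Lambda_t^{(j)}\right),
\end{align*}
where the inner infimum over earlier variables is by definition $\hat{U}_t$. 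Substituting yields the claimed recursion \eqref{Eq:Optimal_U}, and the base case $\hat{U}_0=\|\xi_0\|^2$ follows from the empty product/sum convention in \eqref{eq:opt1} at $t=0$.

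The step most in need of care is the interchange of the outer and inner minimizations. It relies on two facts: (i) the feasible set is a product set so that the two groups of variables are unconstrained with respect to one another, and (ii) the coefficient $\rho_t^2$ multiplying the earlier-time cost is non-negative, so that minimizing the product is equivalent to minimizing the earlier-time cost separately. If $\rho_t^2=0$ were allowed, the earlier variables would become irrelevant for that term, but the bound $\hat{U}_t\ge 0$ still makes the identity $\inf \rho_t^2 A = \rho_t^2 \inf A$ hold trivially; thus no pathological case arises. I would also briefly note that attainment of the minima is immaterial for the recursion itself, so the argument goes through whether we read $\min$ as an infimum or as an attained minimum over the closed feasibility sets defined by the LMI~\eqref{eq:LMI1}.
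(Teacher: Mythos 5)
Your proof is correct and takes essentially the same route as the paper's: the paper verifies the same cost recursion $U_{t+1}=\rho_t^2 U_t+\sum_{j=1}^{J}\lambda_t^{(j)}\Lambda_t^{(j)}$ and then appeals to the dynamic-programming argument of Proposition 1 in the cited prior work, which is exactly the factor-and-interchange step you carry out explicitly. Your explicit handling of the Cartesian-product constraint structure and the $\rho_t^2=0$ case simply fills in details the paper omits.
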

\begin{proof}
Denote the cost function in \eqref{eq:opt1} as $U_t$.  We can verify that $\{U_t\}$ can be recursively calculated via $U_{t+1} = \rho_t^2 U_t  +\sum_{j=1}^{J} \lambda_t^{(j)}  \Lambda_t^{(j)}$ with the initialization
$U_0=\norm{\xi_0}^2$.
Therefore, we can rewrite \eqref{eq:opt1} as
\begin{align*}
\hat{U}_t:= & \minimize_{(\rho_k, \lambda_k^{(j)})\}_{k=0}^{t-1}}
& & U_t \\
& \text{subject to} & &U_{k+1} =\rho_k^2 U_k + \sum_{j=1}^{J} \lambda_k^{(j)}  \Lambda_k^{(j)},\,\, \left(\rho_k, \lambda_k^{(j)} \right) \in \mathcal{T}_k,\,\, 0\le k \le t -1
\end{align*}
The rest of the proof is almost identical to the proof of Proposition 1 in \cite{hu2021analysis}. We can use a similar dynamical programming argument, and the details are omitted.
\end{proof}
 For every $t$, the subproblem \eqref{Eq:Optimal_U} reduces to a SDP, since the cost is linear in $\rho_t^2$ and $\lambda_t^{(j)}$. 
Hence we can follow a unified analysis routine including the following two steps.
\begin{enumerate}
\item Rewrite the online algorithm to be analyzed in the form of  \eqref{eq:gen_algo} with some supply rate condition \eqref{eq:gen_ineq}. The state $\xi_t$ should be set up as the tracking~error.
\item Apply Proposition \ref{prop:main} to obtain the LMI condition \eqref{eq:LMI1}, and solve the sequential SDP problem \eqref{Eq:Optimal_U} to bound $\norm{\xi_t}^2$. 
\end{enumerate}
For the purpose of deriving closed-form tracking error bounds, we need to obtain the exact analytical solutions of the sequential SDP problem \eqref{Eq:Optimal_U}. This is not an easy task. As a matter of fact, the sequential SDPs in \cite{hu2021analysis} have not been solved analytically. To illustrate this issue, let us consider the exact OGD update rule 
\eqref{eq:OGD}.  We denote $e_t=x_t^*-x_{t+1}^*$, and then the OGD iteration \eqref{eq:OGD} can be rewritten as $x_{t+1}-x_{t+1}^*=x_t-x_t^*-\alpha_t \nabla f_t(x_t)+e_t$. If we set $\xi_t=x_t-x_t^*$, $w_t=\bmat{\nabla f_t(x_t)^\tp & e_t^\tp}^\tp$, and $B_t=\bmat{-\alpha_t & 1}$, then the OGD update rule \eqref{eq:OGD} can be viewed as a special case of \eqref{eq:gen_algo}. 
The supply rate condition depends on the assumptions on $\{f_t\}$. If $f_t\in \mathcal{S}(m_t,L_t)$ and $\norm{x_{t+1}^*-x_t^*}\le \sigma_t$, then we have
\begin{align*}
\bmat{x_t-x_t^*\\ \nabla f_t(x_t)}^\tp\bmat{-2L_t m_t I_p & (L_t+m_t) I_p \\ (L_t+m_t) I_p & -2I_p} \bmat{x_t-x_t^*\\\nabla f_t(x_t)}\ge 0, \,\,\,\, \norm{e_t}^2\le \sigma_t^2.
\end{align*}
It is straightforward to rewrite the above two inequalities in the form of the supply rate condition~\eqref{eq:gen_ineq} (for $j=1,2$) with 
$X_t^{(1)}=\diag( Y_t , 0 )$, $\Lambda_t^{(1)}=0$, $X_t^{(2)}=\diag(0, 0, 1)$, and $\Lambda_t^{(2)}=\sigma_t^2$ (recall that $Y_t$ is defined in \eqref{Eq:Mu_def}). 
Based on Proposition \ref{prop:main}, 
If there exist non-negative scalars $(\rho_t, \lambda_t^{(1)}, \lambda_t^{(2)} )$ such that: 
\begin{align}
\label{eq:LMI_OGD}
\bmat{1-\rho_t^2 & -\alpha_t & 1 \\ -\alpha_t & \alpha_t^2 & -\alpha_t\\ 1 & -\alpha_t & 1-\lambda_t^{(2)}}+\lambda_t^{(1)} \bmat{-2L_t m_t & L_t+m_t & 0 \\ L_t+m_t & -2 & 0 \\ 0 & 0 & 0}\preceq 0,
\end{align}
then the OGD method~\eqref{eq:OGD} satisfies
$\norm{x_{t+1}-x_{t+1}^*}^2 \le \rho_t^2 \norm{x_t-x_t^*}^2+\lambda_t^{(2)}  \sigma_t^2$.
Based on Proposition \ref{Prop:Greedy_app}, the tightest tracking error bound that can be obtained from the LMI \eqref{eq:LMI_OGD} for exact OGD is given by the following sequential SDP problem:
\begin{equation}\label{Eq:OGD_Opt_stg1}
\hat{U}_{t+1}= \minimize_{(\rho_t,\lambda_t^{(1)},\lambda_t^{(2)})\in\mathcal{T}_t}
 \rho_t^2 \hat{U}_t + \lambda_t^{(2)} \sigma_t^2,
\end{equation}
where $\mathcal{T}_t \subseteq \R_+^3$ denotes the set of tuples $\left( \rho_t, \lambda_t^{(1)}, \lambda_t^{(2)} \right) $ that satisfy the condition~\eqref{eq:LMI_OGD}. Getting an exact formula for $\hat{U}_t$ is not easy. Getting an upper bound on $\hat{U}_t$ is less difficult, since one can just choose one particular feasible point without arguing about its optimality. Specifically, we can set $\nu_t=\frac{\sigma_t}{\mu_t \sqrt{\hat{U}_t}}$ and then verify that \eqref{eq:LMI_OGD} is feasible with the following choice of decision variables\footnote{If $0\le \alpha_t\le \frac{2}{L_t+m_t}$, the left side of  \eqref{eq:LMI_OGD} with the substitution of the above parameters becomes
\[
\frac{(1+\nu_t) \left( (m_t+L_t)\alpha_2^2-2\alpha_t \right)}{L_t-m_t}\bmat{m_t^2 & -m_t & 0 \\ -m_t & 1 & 0 \\ 0 & 0 & 0} - \nu_t \bmat{1\\ -\alpha_t \\ -\frac{1}{\nu_t}}\bmat{1 & -\alpha_t & -\frac{1}{\nu_t}}\preceq 0.
\]
The verification for the case with
$ \frac{2}{L_t+m_t} \le \alpha_t \le \frac{2}{L_t}$ is similar. }:
\[
\rho_t^2=\mu_t^2\left(1+ \nu_t\right), \,\,\lambda_t^{(1)}=\frac{\mu_t \alpha_t}{L_t-m_t}\left(1+\nu_t\right),\,\,\lambda_t^{(2)}=1+\frac{1}{\nu_t}.
\]

Consequently, we have $\hat{U}_{t+1}\le (\mu_t\sqrt{\hat{U}_t}+\sigma_t)^2$.
The original sequential SDP analysis in \cite{hu2021analysis} is exactly based on choosing particular feasible points (without arguing about optimality). However, obtaining the tightness is more difficult.
Next, we will develop novel algebraic arguments to derive exact analytical solutions for the sequential SDP problem \eqref{Eq:Optimal_U} formulated for various inexact online optimization methods. 
One of the results that we will develop for the inexact OGD method will actually cover the exact OGD analysis as a special case and states that 
the exact solution for \eqref{Eq:OGD_Opt_stg1} is actually given by $\hat{U}_{t+1}= (\mu_t\sqrt{\hat{U}_t}+\sigma_t)^2$ (see Remark \ref{remark1} in Section~\ref{sec:Inexact_OGD_Abs}).

\section{Tracking error bounds for inexact OGD}
\label{sec:Inexact_OGD}
We start from applying our sequential SDP approach to derive tracking error bounds for the inexact OGD method.

\subsection{Analysis of inexact OGD with additive absolute error} \label{sec:Inexact_OGD_Abs}
The update rule for the inexact OGD method is given~by:

 \begin{align}\label{eq:inexactOGD}
   x_{t+1}=x_t-\alpha_t (\nabla f_t(x_t)+v_t),   
 \end{align}
 
 where $v_t$ is an error term capturing the inexactness in the gradient.
We first~assume  $v_t$ is an absolute error satisfying the bound $\norm{v_t}\le c_t$. We have the following result.
\begin{lemma}
\label{lem:InexactOGD_adv_abs}
Consider the inexact OGD method \eqref{eq:inexactOGD}.
For all $t$, we assume: i)  $f_t\in\mathcal{S}(m_t,L_t)$; ii) $\|x_{t+1}^*-x_t^*\|\le \sigma_t$; iii) $\norm{v_t} \le c_t$ . If there exist non-negative scalars $(\rho_t, \lambda_t^{(1)}, \lambda_t^{(2)},\lambda_t^{(3)} )$ such that the following matrix inequality holds
\begin{align}
\label{eq: LMI_InExOGD_Abs}
\bmat{1-\rho_t^2-2\lambda_t^{(1)}L_t m_t & -\alpha_t+\lambda_t^{(1)} (L_t+m_t) & 1 & -\alpha_t\\ -\alpha_t+\lambda_t^{(1)}(L_t+m_t) & \alpha_t^2-2\lambda_t^{(1)} & -\alpha_t & \alpha_t^2\\ 1 & -\alpha_t & 1-\lambda_t^{(2)} & -\alpha_t \\ -\alpha_t & \alpha_t^2 & -\alpha_t & \alpha_t^2-\lambda_t^{(3)}}\preceq 0,
\end{align}
then the iterates from  \eqref{eq:inexactOGD} satisfy 
$\norm{x_{t+1}-x_{t+1}^*}^2 \le \rho_t^2 \norm{x_t-x_t^*}^2+\lambda_t^{(2)} \sigma_t^2+\lambda_t^{(3)} c_t^2$.
\end{lemma}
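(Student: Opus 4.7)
The plan is to recognize Lemma \ref{lem:InexactOGD_adv_abs} as a direct instantiation of Proposition \ref{prop:main} with an appropriately chosen state-space representation and three quadratic supply rate conditions, one for each of the three assumptions on $f_t$, $x_t^*$, and $v_t$. So the work amounts to (i) pushing the inexact OGD update into the form \eqref{eq:gen_algo} with the tracking error as the state, (ii) translating the three structural assumptions into quadratic constraints of the form \eqref{eq:gen_ineq}, and (iii) checking that the resulting LMI \eqref{eq:LMI1} is syntactically identical to \eqref{eq: LMI_InExOGD_Abs}.

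First, I would set $\xi_t := x_t - x_t^*$ and $e_t := x_t^* - x_{t+1}^*$. Subtracting $x_{t+1}^*$ from both sides of \eqref{eq:inexactOGD} yields
\[
\xi_{t+1} = \xi_t - \alpha_t \nabla f_t(x_t) + e_t - \alpha_t v_t.
\]
Stacking the three driving signals as $w_t := \bmat{\nabla f_t(x_t)^\tp & e_t^\tp & v_t^\tp}^\tp \in \R^{3p}$ and choosing $B_t = \bmat{-\alpha_t & 1 & -\alpha_t}$, the iteration takes exactly the form \eqref{eq:gen_algo}, and the tracking error is indeed $\|\xi_t\|^2$. This step mirrors what was already done in the exact OGD discussion preceding the lemma, just with an extra coordinate appended to $w_t$.

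Next, I would encode the three hypotheses as supply rates in the form \eqref{eq:gen_ineq}. From $f_t \in \mathcal{S}(m_t,L_t)$ together with $\nabla f_t(x_t^*)=0$, the sector inequality \eqref{eq:gradient3} written in terms of $\xi_t$ and $\nabla f_t(x_t)$ gives $X_t^{(1)} = \mathrm{diag}(Y_t,0,0)$ with $Y_t$ as in \eqref{Eq:Mu_def} and $\Lambda_t^{(1)}=0$; the bound $\|e_t\|\le \sigma_t$ gives $X_t^{(2)} = \mathrm{diag}(0,0,1,0)$ with $\Lambda_t^{(2)}=\sigma_t^2$; and $\|v_t\|\le c_t$ gives $X_t^{(3)} = \mathrm{diag}(0,0,0,1)$ with $\Lambda_t^{(3)}=c_t^2$. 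All three are of the exact form required by \eqref{eq:gen_ineq}, so Proposition \ref{prop:main} applies verbatim.

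Finally, I would compute $B_t^\tp B_t$ explicitly and assemble the $4\times 4$ block matrix
\[
\bmat{1-\rho_t^2 & B_t \\ B_t^\tp & B_t^\tp B_t} = \bmat{1-\rho_t^2 & -\alpha_t & 1 & -\alpha_t \\ -\alpha_t & \alpha_t^2 & -\alpha_t & \alpha_t^2 \\ 1 & -\alpha_t & 1 & -\alpha_t \\ -\alpha_t & \alpha_t^2 & -\alpha_t & \alpha_t^2}.
\]
Subtracting $\sum_{j=1}^3 \lambda_t^{(j)} X_t^{(j)}$ produces precisely the matrix on the left-hand side of \eqref{eq: LMI_InExOGD_Abs}, so its negative semidefiniteness is exactly the hypothesis \eqref{eq:LMI1} of Proposition \ref{prop:main}. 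The conclusion of that proposition then reads
\[
\|\xi_{t+1}\|^2 \le \rho_t^2 \|\xi_t\|^2 + \lambda_t^{(1)} \cdot 0 + \lambda_t^{(2)} \sigma_t^2 + \lambda_t^{(3)} c_t^2,
\]
which is the claimed one-step bound. Since the argument is purely a matching-of-matrices exercise, there is no real analytical obstacle; the only thing to be careful about is the sign convention (the sector condition \eqref{eq:gradient3} is stated as $\ge 0$, so one negates to land in the $\le \Lambda_t^{(j)}$ template of \eqref{eq:gen_ineq}) and the placement of the extra row/column corresponding to $v_t$, which is what distinguishes \eqref{eq: LMI_InExOGD_Abs} from the exact-OGD LMI \eqref{eq:LMI_OGD}.
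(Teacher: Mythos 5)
Your proposal is correct and follows essentially the same route as the paper's proof: the same state-space rewriting with $\xi_t=x_t-x_t^*$, $w_t=\bmat{\nabla f_t(x_t)^\tp & e_t^\tp & v_t^\tp}^\tp$, $B_t=\bmat{-\alpha_t & 1 & -\alpha_t}$, the same three supply-rate pairs $(X_t^{(j)},\Lambda_t^{(j)})$, and a direct application of Proposition \ref{prop:main}. The explicit assembly of $B_t^\tp B_t$ and the sign-convention remark are just slightly more detailed versions of steps the paper leaves implicit.
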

\begin{proof}
We can rewrite \eqref{eq:inexactOGD} as
$x_{t+1}-x_{t+1}^*=x_t-x_t^*-\alpha \nabla f_t(x_t) +e_t-\alpha v_t$,
where  $e_t=x_t^*-x_{t+1}^*$. 
If we set $\xi_t=x_t-x_t^*$, $w_t=\bmat{ \nabla f_t(x_t)^\tp &  e_t^\tp &  v_t^\tp}^\tp$, and  $B_t=\bmat{-\alpha_t & 1 & -\alpha_t}$, 
then \eqref{eq:inexactOGD} becomes a special case of \eqref{eq:gen_algo}.
The assumption $f_t\in \mathcal{S}(m_t,L_t)$ ensures  a supply rate condition with 
 $X_t^{(1)} = \diag(Y_t,0,0)$ and $\Lambda_t^{(1)}=0$ (recall that $Y_t$ is defined by \eqref{Eq:Mu_def}). 
The bound $\|x_{t+1}^*-x_t^*\|\le \sigma_t$ leads to the second supply rate condition with $X_t^{(2)} =  \diag( 0, 0, 1, 0 )$ and $\Lambda_t^{(2)}=\sigma_t^2$.
Based on $\norm{v_t} \le c_t$, we have $X_t^{(3)} = \diag( 0,0,0,1)$ and $\Lambda_t^{(3)}=c_t^2$. This leads to the desired LMI.
\end{proof}

Next, we solve the sequential SDP \eqref{Eq:Optimal_U} exactly and obtain the following bound.  

\begin{theorem}\label{th:InexactOGD_abs_param}
 Suppose $\alpha_t\le \frac{2}{L_t}$ for all $t$. Let $\mathcal{T}_t \subseteq \R_+^4$ denote the set of tuples $\left( \rho_t, \lambda_t^{(1)},  \lambda_t^{(2)} , \lambda_t^{(3)} \right) $ that satisfy the condition \eqref{eq: LMI_InExOGD_Abs}. Suppose the assumptions in Lemma~\ref{lem:InexactOGD_adv_abs} hold for all $t$. Then the bound $\hat{U}_t$ defined in \eqref{Eq:Optimal_U} with $J=3$ is given by
\begin{align}
  \label{Eq:U_inexactOGD_abs_full}
  \hat{U}_{t+1}= 
    \left( \mu_t \sqrt{\hat{U}_t}+\sigma_t+ \alpha_t c_t \right)^2.
\end{align}
\end{theorem}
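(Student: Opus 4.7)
The plan is to prove matching upper and lower bounds on $\hat U_{t+1}$: an upper bound by exhibiting an explicit feasible point of the LMI \eqref{eq: LMI_InExOGD_Abs} whose objective attains the claimed value, and a matching lower bound by constructing an adversarial instance of inexact OGD that attains equality.

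For the upper bound, I set $s_t := \sigma_t + \alpha_t c_t$ and introduce the slack $\nu_t := s_t/(\mu_t\sqrt{\hat U_t})$. Motivated by the exact-OGD parametrization given at the end of Section~\ref{sec:prelim} and by the tight AM-GM split $-2\alpha_t e_t^\tp v_t \le \alpha_t\bigl(\tfrac{c_t}{\sigma_t}\|e_t\|^2 + \tfrac{\sigma_t}{c_t}\|v_t\|^2\bigr)$ which turns the triangle inequality $\|e_t - \alpha_t v_t\|^2 \le s_t^2$ into a worst-case equality, I would try
\begin{align*}
\rho_t^2 = \mu_t^2(1+\nu_t),\quad \lambda_t^{(1)} = \frac{\mu_t\alpha_t(1+\nu_t)}{L_t-m_t},\quad \lambda_t^{(2)} = \Bigl(1+\tfrac{1}{\nu_t}\Bigr)\tfrac{s_t}{\sigma_t},\quad \lambda_t^{(3)} = \Bigl(1+\tfrac{1}{\nu_t}\Bigr)\tfrac{\alpha_t s_t}{c_t}.
\end{align*}
A direct substitution yields $\rho_t^2\hat U_t + \lambda_t^{(2)}\sigma_t^2 + \lambda_t^{(3)} c_t^2 = \mu_t^2(1+\nu_t)\hat U_t + (1+\tfrac{1}{\nu_t})s_t^2 = (\mu_t\sqrt{\hat U_t}+s_t)^2$ by the choice of $\nu_t$, so it suffices to verify that the $4\times 4$ matrix in \eqref{eq: LMI_InExOGD_Abs} is NSD. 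Following the strategy of the exact-OGD footnote, I would rewrite this matrix as a sum of two rank-one NSD pieces: a nonpositive multiple of the $[m_t,-1,0,0]^\tp$-outer-product (with coefficient proportional to $(1+\nu_t)((L_t+m_t)\alpha_t^2 - 2\alpha_t)/(L_t-m_t)$, which is $\le 0$ when $\alpha_t \le 2/(L_t+m_t)$), minus a rank-one outer product along an augmented direction that couples $\xi_t$, $e_t$, and $v_t$ with weights dictated by the AM-GM split above. The case $2/(L_t+m_t) \le \alpha_t \le 2/L_t$ is handled symmetrically using the corresponding branch of \eqref{Eq:Mu_def}, and edge cases ($\sigma_t=0$, $c_t=0$, or $L_t=m_t$) are covered by continuity.

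For the lower bound, I would construct an adversarial scalar ($p=1$) instance. Take $f_t(x) = \tfrac{\kappa_t}{2}(x-x_t^*)^2$ with $\kappa_t = m_t$ if $\alpha_t \le 2/(L_t+m_t)$ and $\kappa_t = L_t$ otherwise; then $f_t \in \mathcal{F}(m_t,L_t) \subset \mathcal{S}(m_t,L_t)$, and $x_t - \alpha_t\nabla f_t(x_t) - x_t^* = (1-\alpha_t\kappa_t)(x_t - x_t^*)$ has magnitude exactly $\mu_t|x_t - x_t^*|$. Choose signs $v_t\in\{\pm c_t\}$ and $x_{t+1}^*-x_t^*\in\{\pm\sigma_t\}$ so that $-\alpha_t v_t + (x_t^* - x_{t+1}^*)$ aligns with the post-gradient residual, giving $|x_{t+1}-x_{t+1}^*| = \mu_t|x_t - x_t^*| + s_t$. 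Starting from $|x_0-x_0^*| = \sqrt{\hat U_0}$ and iterating, this admissible trajectory satisfies $\|x_{t+1}-x_{t+1}^*\|^2 = (\mu_t\sqrt{\hat U_t}+s_t)^2$ at every step. Since Proposition~\ref{Prop:Greedy_app} forces $\hat U_{t+1} \ge \|x_{t+1}-x_{t+1}^*\|^2$ for every admissible instance, this yields $\hat U_{t+1} \ge (\mu_t\sqrt{\hat U_t}+s_t)^2$, matching the upper bound.

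The main obstacle will be the NSD verification of the $4\times 4$ LMI. Unlike the exact-OGD case, whose residual collapses to a single rank-one outer product, the inexact case couples $e_t$ and $v_t$ through the off-diagonal $(3,4)$ entries, and it is not immediate that the proposed $\lambda_t^{(2)},\lambda_t^{(3)}$ absorb that coupling cleanly. The guiding intuition is that $\beta = c_t/\sigma_t$ is precisely the AM-GM weight that makes $\|e_t - \alpha_t v_t\| \le s_t$ tight, so after the proposed substitution the residual should factor into a manifest rank-one outer product on an augmented direction, effectively reducing the calculation to the exact-OGD case with $\sigma_t$ replaced by $s_t$.
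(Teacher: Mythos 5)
Your proposal is correct in substance but takes a genuinely different route from the paper. The paper never guesses a feasible point: it performs an exhaustive change of variables ($\lambda_t^{(2)}=1+\tfrac{1}{\nu_t}$, $\lambda_t^{(3)}=\alpha_t^2(1+\nu_t)(1+\tfrac{1}{\zeta_t})$, then $\tau_t$ in place of $\lambda_t^{(1)}$), applies the Schur complement to convert \eqref{eq: LMI_InExOGD_Abs} into the equivalent scalar condition $(1+\nu_t)(1+\zeta_t)h(\tau_t)\le\rho_t^2$ with $h$ as in \eqref{eq:h_tau}, and then solves the nested minimization over $(\nu_t,\zeta_t,\tau_t)$ in closed form, so achievability and optimality come out of a single algebraic computation over the \emph{entire} feasible set. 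You instead prove two one-sided bounds: an explicit feasible point for the upper bound, and a worst-case one-dimensional quadratic instance for the lower bound. Both halves of your argument do go through. For the upper bound, your parameters are feasible: writing $u_1=[1,-\alpha_t,0,0]^\tp$, $u_2=[0,0,1,-\alpha_t]^\tp$, the LMI matrix at your point equals $\frac{(1+\nu_t)\left((L_t+m_t)\alpha_t^2-2\alpha_t\right)}{L_t-m_t}\,\diag\!\left(\bsmtx m_t^2 & -m_t\\ -m_t & 1\esmtx,0_2\right) - w_1w_1^\tp - \left(1+\tfrac{1}{\nu_t}\right)w_2w_2^\tp$ with $w_1=\sqrt{\nu_t}\,u_1-u_2/\sqrt{\nu_t}$ and $w_2=\sqrt{\beta_t}\,e_3+\alpha_t e_4/\sqrt{\beta_t}$, $\beta_t=\alpha_t c_t/\sigma_t$ --- so the residual is a sum of \emph{three} NSD pieces rather than the two you predicted, but the decomposition is exactly the one your AM--GM splits dictate. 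For the lower bound, the correct justification is to apply Lemma~\ref{lem:InexactOGD_adv_abs} to your adversarial instance for \emph{each} feasible tuple in $\mathcal{T}_t$ and then minimize (Proposition~\ref{Prop:Greedy_app} by itself does not assert $\hat{U}_{t+1}\ge\norm{x_{t+1}-x_{t+1}^*}^2$; that inequality follows from Lemma~\ref{lem:InexactOGD_adv_abs} plus the definition of $\hat{U}_{t+1}$ as a minimum), and the induction closes because the matching upper bound forces $|x_{t+1}-x_{t+1}^*|=\sqrt{\hat{U}_{t+1}}$ at every step. What each approach buys: the paper's derivation discovers the value of $\hat U_{t+1}$ rather than verifying a guess, and transfers to the harder cases (e.g.\ Theorem~\ref{th:InexactOGD_rel_param}, where the three-branch $\hat\rho_t$ would be painful to guess and a tight instance is less obvious); your argument proves something strictly stronger, namely that the sequential SDP has no relaxation gap against the actual class of admissible instances, which the paper's purely algebraic solution does not address. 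The degenerate cases ($\sigma_t=0$, $c_t=0$, $\mu_t=0$, $L_t=m_t$) do need the limiting argument you mention, since your multipliers are undefined there.
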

\begin{proof}
 Based on Proposition \ref{Prop:Greedy_app}, we need to solve the following problem:
\begin{equation}\label{Eq:InEx_OGD_Opt_stg1}
\hat{U}_{t+1}= \minimize_{(\rho_t,\lambda_t^{(1)},\lambda_t^{(2)},\lambda_t^{(3)})\in\mathcal{T}_t} \rho_t^2 \hat{U}_t + \lambda_t^{(2)} \sigma_t^2 + \lambda_t^{(3)} c_t^2. 
\end{equation}
Solving \eqref{Eq:InEx_OGD_Opt_stg1} exactly is not easy due to the LMI constraint.  We will transform \eqref{Eq:InEx_OGD_Opt_stg1} to an equivalent problem with simpler constraints.
Set $\lambda_t^{(2)} =1+\frac{1}{\nu_t}$ and $\lambda_t^{(3)} =\alpha_t^2(1+\nu_t)\left( 1+\frac{1}{\zeta_t} \right)$. 
By the Schur complement, \eqref{eq: LMI_InExOGD_Abs} is equivalent to $\zeta_t>0,\,\,\nu_t>0$~and 
\begin{equation*}
\bmat{(1+\nu_t)(1+\zeta_t)-\rho_t^2-2\lambda_t^{(1)} L_t m_t   & \lambda_t^{(1)}\left(L_t+m_t\right)-\alpha_t(1+\nu_t)(1+\zeta_t) \\
 \lambda_t^{(1)}\left(L_t+m_t\right)-\alpha_t(1+\nu_t)(1+\zeta_t)   & \alpha_t^2 (1+\nu_t)(1+\zeta_t)-2 \lambda_t^{(1)} }\preceq 0.
\end{equation*}
Set $\tau_t:=\frac{2 \lambda_t^{(1)}}{(1+\nu_t)(1+\zeta_t)}-\alpha_t^2$. We  have $\lambda_t^{(1)}= \frac{1}{2}\left(\alpha_t^2 +\tau_t \right)(1+\nu_t)(1+\zeta_t)$. Define $h(\tau_t)$~as
\begin{align}\label{eq:h_tau}
h(\tau_t)=1-\alpha_t (L_t+m_t)+\frac{\alpha_t^2(m_t^2+L_t^2)}{2} +\frac{\tau_t\left(L_t-m_t\right)^2}{4} +\frac{\alpha_t^2\left(\alpha_t\left(L_t+m_t\right)-2\right)^2}{4 \tau_t}.   
\end{align}
Then the above matrix inequality condition holds if and only if $\zeta_t >0 $, $\nu_t >0$, $\tau_t \geq 0$, and $(1+\nu_t)(1+\zeta_t)h(\tau_t) \leq \rho_t^2$.
It is straightforward to verify $h(\tau_t)\ge 0$ for all $\tau_t\ge 0$.
 In addition, we can show that the optimal choice of $\tau_t$ minimizing $h(\tau_t)$ has to satisfy $\frac{\tau_t}{4}\left(L_t-m_t\right)^2 =\frac{\alpha_t^2\left(\alpha_t\left(L_t+m_t\right)-2\right)^2}{4 \tau_t}$ and eventually leads to
 $\min_{\tau_t\ge 0} h(\tau_t)=\mu_t^2$, where $\mu_t$ is defined by \eqref{Eq:Mu_def}. Hence we have
 \begin{align*}
\hat{U}_{t+1}&=\min_{\nu_t>0}\,\,\left(\min_{\zeta_t>0}\,\,\left( (1+\nu_t) \left( \min_{\tau_t\ge 0} \left( (1+\zeta_t)h(\tau_t)\hat{U}_t + \alpha_t^2 c_t^2 \left( 1+\frac{1}{\zeta_t}\right)  +\frac{\sigma_t^2}{\nu_t} \right)\right)\right) \right) \\
    &=\min_{\nu_t>0}\,\,\left(\min_{\zeta_t>0}\,\, \left( (1+\nu_t) \left( (1+\zeta_t)\left(\min_{\tau_t\ge 0}h(\tau_t)\right)\hat{U}_t +  \alpha_t^2 c_t^2 \left( 1+\frac{1}{\zeta_t}\right)  + \frac{\sigma_t^2}{\nu_t}  \right)\right) \right) \\
    &=\min_{\nu_t>0}\,\,\left(\min_{\zeta_t>0}\,\, \left(  (1+\nu_t) \left( (1+\zeta_t)\mu_t^2\hat{U}_t + \left( 1+\frac{1}{\zeta_t}\right) \alpha_t^2 c_t^2  +\frac{\sigma_t^2}{\nu_t}  \right)\right) \right) 
\end{align*}
Now we can solve \eqref{Eq:InEx_OGD_Opt_stg1} as 
\begin{align*}
\hat{U}_{t+1}        =\min_{\nu_t>0} \left((1+\nu_t)\left(\mu_t \sqrt{\hat{U}_t}+\alpha_t c_t\right)^2+\alpha_t^2 \left(1+\frac{1}{\nu_t}\right)\sigma_t^2\right)
    =\left(\mu_t \sqrt{\hat{U}_t}+\sigma_t+\alpha_t c_t\right)^2.
\end{align*}
This completes our proof.
\end{proof}

We can see that \eqref{Eq:U_inexactOGD_abs_full} gives a simple analytical formula  describing the recursion of~$\{\hat{U}_t\}$, and $\hat{U}_t$ can be easily rewritten as a function of $\hat{U}_0$ and $\{\mu_k,\sigma_k\}_{k=0}^{t-1}$ given the fact $\sqrt{\hat{U}_{t+1}}=\mu_t \sqrt{\hat{U}_t}+\sigma_t+\alpha_t c_t$.
In Section~\ref{sec:dynamicregret}, we will show how to convert this recursive tracking error bound into a dynamic regret bound.
It is also easy to verify that the choice of $\alpha_t$ that minimizes the iterative bound in \eqref{Eq:U_inexactOGD_abs_full} is given by $\hat{\alpha}_t=\frac{2}{L_t+m_t}$. The recursive bound in \eqref{Eq:U_inexactOGD_abs_full} can be further simplified if we take $L_t=L$, $m_t=m$, and $\alpha_t=\alpha$ for all $t$. 
In this case, we have $\mu_t=\mu$ $\forall t$
with $\mu$ given~by
\begin{align*}
  \mu:=  \left\{
    \begin{array}{ll}
    1-m\alpha  & \quad  \mbox{if } 0\le \alpha \le \frac{2}{m+L}\\
     \alpha L-1 & \quad  \mbox{if } \frac{2}{m+L}\le \alpha \le \frac{2}{L}
    \end{array}
  \right.,
\end{align*}
Then \eqref{Eq:U_inexactOGD_abs_full} can be simplified as $\sqrt{\hat{U}_{t+1}}=\mu\sqrt{\hat{U}_t}+\sigma+\alpha c$, which can be equivalently rewritten as  $\sqrt{\hat{U}_t}= 
    \mu^t\left( \sqrt{\hat{U}_0} - \frac{\sigma+ \alpha c }{1-\mu} \right) + \frac{\sigma+ \alpha c }{1-\mu}$.
Clearly, $\sqrt{\hat{U}_t}$ converges linearly to the steady state value $(\sigma+ \alpha c )/(1-\mu)$ with a rate specified by $\mu$.

\begin{remark}[Connections with exact OGD results]\label{remark1}
If $c_t=0$, we can modify the above argument to show that the exact analytical solution to \eqref{Eq:OGD_Opt_stg1} is given by $\hat{U}_{t+1}= (\mu_t\sqrt{\hat{U}_t}+\sigma_t)^2$. In contrast, the previous feasible point argument in Section~\ref{sec:prelim} only proves $\hat{U}_{t+1}\le (\mu_t\sqrt{\hat{U}_t}+\sigma_t)^2$.
If $m_t=m$, $L_t=L$, and $\alpha_t=\alpha$ for all $t$, the formula for $\hat{U}_t$ becomes
$\sqrt{\hat{U}_t}=\mu^t \left(\sqrt{\hat{U}_0}-\frac{\sigma}{1-\mu}\right)+\frac{\sigma}{1-\mu}$.
Clearly, $\sqrt{\hat{U}_t}$ converges to its steady-state value (which is $\sigma/(1-\mu)$) at a linear rate given by~$\mu$. Since we have $\norm{x_t-x_t^*}\le \sqrt{U_t}$, we can see that our analysis recovers the previous exact OGD bound in \cite[Theorem 3.1]{madden2020bounds}. If $\sigma \equiv 0$, we can also recover the convergence bound for the case of static optimization, i.e. $\sqrt{\hat{U}_t} \leq \mu^t \sqrt{\hat{U}_0}$. This provides a good sanity check for our unified analysis.
\end{remark}

\subsection{Analysis of inexact OGD with additive relative error}\label{sec:OGD_relative_err}
Next, we consider the inexact OGD method \eqref{eq:inexactOGD} with a more complicated error model.
We assume that the bias term $v_t$ satisfies $\norm{v_t}\le \delta_t \norm{\nabla f_t(x_t)}$ for some known constant $\delta_t$. 
Such a relative error bound has been extensively studied in the stationary cost setting~\cite{Lessard2014,de2017worst,gannot2021frequency,hu2021analysis}. 
As commented in \cite{de2017worst}, this assumption means  $|\sin (\theta)| \le \delta_t$ with $\theta$ being the angle between $\nabla f_t(x_t)$ and the true update direction $(\nabla f_t(x_t)+v_t)$. 
Now we discuss the impacts of such a relative error term on the tracking capability of OGD.
We have the following LMI condition.

\begin{lemma}
\label{lem:InexactOGD_adv_rel}
Consider the inexact OGD method \eqref{eq:inexactOGD}.
For all $t$, we assume: i)  $f_t \in \mathcal{S}\left(m_t,L_t\right)$; ii) $\|x_{t+1}^*-x_t^*\|\le \sigma_t$; iii) $\norm{v_t}\le \delta_t \norm{\nabla f_t(x_t)} $ for $\delta_t >0$. If there exist non-negative scalars $(\rho_t, \lambda_t^{(1)}, \lambda_t^{(2)},\lambda_t^{(3)} )$ such that the following matrix inequality holds
\begin{align}
\label{eq:LMI_InexOGD_rel}
\bmat{1-\rho_t^2-2\lambda_t^{(1)}L_t m_t & -\alpha_t+\lambda_t^{(1)} (L_t+m_t) & 1 & -\alpha_t\\ -\alpha_t+\lambda_t^{(1)}(L_t+m_t) & \alpha_t^2-2\lambda_t^{(1)}+\delta_t^2 \lambda_t^{(3)} & -\alpha_t & \alpha_t^2\\ 1 & -\alpha_t & 1-\lambda_t^{(2)} & -\alpha_t \\ -\alpha_t & \alpha_t^2 & -\alpha_t & \alpha_t^2-\lambda_t^{(3)}}\preceq 0,
\end{align}
then  the inexact OGD update \eqref{eq:inexactOGD} satisfies $\norm{x_{t+1}-x_{t+1}^*}^2 \le \rho_t^2 \norm{x_t-x_t^*}^2+\lambda_t^{(2)} \sigma_t^2$.
\end{lemma}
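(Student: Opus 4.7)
The plan is to apply the general SDP framework from Section~\ref{sec:prelim}, reusing essentially the same state-space embedding as in Lemma~\ref{lem:InexactOGD_adv_abs} but swapping out the third supply rate condition for one that captures the relative (rather than absolute) bound on $v_t$.

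First, I would rewrite the update \eqref{eq:inexactOGD} in terms of the shifted state $\xi_t := x_t - x_t^*$. Setting $e_t := x_t^* - x_{t+1}^*$, we obtain
$$
\xi_{t+1} = \xi_t - \alpha_t \nabla f_t(x_t) + e_t - \alpha_t v_t,
$$
which fits the template \eqref{eq:gen_algo} with $w_t = \bmat{\nabla f_t(x_t)^\tp & e_t^\tp & v_t^\tp}^\tp$ and $B_t = \bmat{-\alpha_t & 1 & -\alpha_t}$. This recasting is identical to that used in the absolute-error case.

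Next, I would assemble the three supply rate conditions. The sector-bounded gradient assumption $f_t \in \mathcal{S}(m_t, L_t)$ and the combined inequality \eqref{eq:gradient3} yield $X_t^{(1)} = \diag(Y_t, 0, 0)$ with $\Lambda_t^{(1)} = 0$, as before. The bound $\norm{x_{t+1}^* - x_t^*} \le \sigma_t$ becomes $\norm{e_t}^2 \le \sigma_t^2$, encoded by $X_t^{(2)} = \diag(0,0,1,0)$ and $\Lambda_t^{(2)} = \sigma_t^2$. The only genuinely new ingredient is the third supply rate. Squaring the relative error bound gives $\norm{v_t}^2 - \delta_t^2 \norm{\nabla f_t(x_t)}^2 \le 0$, which in our coordinates reads
$$
w_t^\tp \bigl( X_t^{(3)} \otimes I_p \bigr) w_t \le 0, \qquad X_t^{(3)} = \diag(0,\,-\delta_t^2,\,0,\,1), \qquad \Lambda_t^{(3)} = 0.
$$
The crucial difference from Lemma~\ref{lem:InexactOGD_adv_abs} is that this constraint is homogeneous of degree two in $w_t$, so $\Lambda_t^{(3)} = 0$; this is precisely why no extra additive term appears on the right-hand side of the conclusion. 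In addition, the coupling of the gradient channel to the noise channel shows up as the $-\delta_t^2$ entry in the $(2,2)$ block of $X_t^{(3)}$, which is what produces the $\delta_t^2 \lambda_t^{(3)}$ term in the $(2,2)$ entry of \eqref{eq:LMI_InexOGD_rel}.

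Finally, I would substitute $B_t$ and $\{X_t^{(j)}\}$ directly into the LMI \eqref{eq:LMI1} of Proposition~\ref{prop:main}. A quick block-by-block computation of $\bmat{1-\rho_t^2 & B_t \\ B_t^\tp & B_t^\tp B_t} - \sum_{j=1}^{3} \lambda_t^{(j)} X_t^{(j)}$ reproduces the left-hand side of \eqref{eq:LMI_InexOGD_rel}, and Proposition~\ref{prop:main} then yields $\norm{\xi_{t+1}}^2 \le \rho_t^2 \norm{\xi_t}^2 + \lambda_t^{(2)} \sigma_t^2 + \lambda_t^{(3)} \cdot 0$, which is the claimed bound. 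I do not expect any substantive obstacle here: all the dissipativity machinery is already in place, and the only points requiring care are (i) correctly placing the $-\delta_t^2$ in the gradient block of $X_t^{(3)}$, as opposed to an $I$ in the noise block only, and (ii) verifying the sign conventions so that the combined LMI matches \eqref{eq:LMI_InexOGD_rel} exactly. The rest is bookkeeping.
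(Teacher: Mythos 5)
Your proposal is correct and follows essentially the same route as the paper: reuse the state-space embedding and the first two supply rates from the absolute-error case, encode $\norm{v_t}\le\delta_t\norm{\nabla f_t(x_t)}$ as the homogeneous supply rate $X_t^{(3)}=\diag(0,-\delta_t^2,0,1)$ with $\Lambda_t^{(3)}=0$, and invoke Proposition~\ref{prop:main}. The sign bookkeeping you flag (the $-\delta_t^2$ in the gradient block producing the $+\delta_t^2\lambda_t^{(3)}$ term in the $(2,2)$ entry) checks out against \eqref{eq:LMI_InexOGD_rel}.
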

\begin{proof}
We can adopt $(X_t^{(1)}, \Lambda_t^{(1)}, X_t^{(2)}, \Lambda_t^{(2)})$ from the proof of Lemma~\ref{lem:InexactOGD_adv_abs}. The inequality $\norm{v_t}\le \delta_t\norm{\nabla f_t(x_t)}$ can be rewritten as a third supply rate condition with $X_t^{(3)}= \diag(0,-\delta_t^2,0,1)$ and $\Lambda_t^{(3)}=0$. This leads to the desired conclusion.
\end{proof}

It is important to notice that the LMI in Lemma \ref{lem:InexactOGD_adv_rel} is only feasible when some reasonable bound on $\delta_t$ is posed.  We will provide such bounds in our next result. We can see that  \eqref{eq:LMI_InexOGD_rel} and \eqref{eq: LMI_InExOGD_Abs} are quite similar. The only difference is that there is an extra term $\delta_t^2 \lambda_t^{(3)}$ in the $(2,2)$-th entry of the LMI \eqref{eq:LMI_InexOGD_rel}. However, such a small change will cause the analytical solution of the resultant SDP problem  \eqref{Eq:Optimal_U} to become much more complicated. We formalize this result as below.

\begin{theorem}\label{th:InexactOGD_rel_param}
Suppose $0\le \alpha_t\le \frac{2}{(1+\delta_t)L_t}$ and  $\delta_t \in [0, \frac{2 m_t}{L_t+m_t} )$ for all $t$. Let $\mathcal{T}_t \subseteq \R_+^4$ denote the set of tuples $\left( \rho_t, \lambda_t^{(1)}, \lambda_t^{(2)},\lambda_t^{(3)} \right) $ that satisfy the condition \eqref{eq:LMI_InexOGD_rel}. Suppose the assumptions in Lemma~\ref{lem:InexactOGD_adv_rel} hold for all $t$. Then the recursive bound $\hat{U}_t$ defined in \eqref{Eq:Optimal_U} with $J=3$ is given by $\hat{U}_{t+1} =       \left(\hat{\rho}_t\sqrt{\hat{U}_t}+\sigma_t\right)^2$,
with $\hat{\rho}_t$ being defined~as
\begin{align}
\label{eq:rho_delta}
  \hat{\rho}_t :=  \left\{
    \begin{array}{ll}
        1-\alpha_t m_t(1-\delta_t)  & \mbox{if } 0\le \alpha_t \le \alpha_{t_-} \\
       \left(1-\frac{2\alpha_t L_t m_t}{L_t+m_t}+\frac{\alpha_t\delta_t^2(L_t+m_t-2\alpha_t L_tm_t)}{2-\alpha_t(L_t+m_t)}\right)^\frac{1}{2} & \mbox{if } \alpha_{t_-} \le \alpha_t \le \alpha_{t_+}\\
      (1+\delta_t)\alpha_t L_t-1 & \mbox{if } \alpha_{t_+} \le \alpha_t \le \frac{2}{(1+\delta_t)L_t}
    \end{array}
  \right.
\end{align} 
for $\alpha_{t_-}:=\frac{1}{1-\delta_t}(\frac{2}{L_t+m_t}-\frac{\delta_t}{m_t})$ and $\alpha_{t_+}:=\frac{1}{1+\delta_t}(\frac{2}{L_t+m_t}+\frac{\delta_t}{L_t})$.
\end{theorem}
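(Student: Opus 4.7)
The strategy is to mirror Theorem~\ref{th:InexactOGD_abs_param}, adapted to account for the absence of a $c_t$ term in the cost and for the extra $\delta_t^2\lambda_t^{(3)}$ entry in the LMI \eqref{eq:LMI_InexOGD_rel}. By Proposition~\ref{Prop:Greedy_app}, it suffices to solve
\[
\hat{U}_{t+1} = \min_{(\rho_t,\lambda_t^{(1)},\lambda_t^{(2)},\lambda_t^{(3)})\in\mathcal{T}_t} \rho_t^2 \hat{U}_t + \lambda_t^{(2)}\sigma_t^2.
\]
Since $\lambda_t^{(3)}$ no longer appears in the cost, I plan to treat it as a free slack variable and eliminate it together with $\lambda_t^{(2)}$ via a first Schur complement, then eliminate the $\nabla f_t$-direction via a second Schur complement, arriving at a scalar lower bound on $\rho_t^2$ analogous to the $h(\tau_t)$ bound in \eqref{eq:h_tau}.

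Concretely, I would substitute $\lambda_t^{(2)}=1+1/\nu_t$ and $\lambda_t^{(3)}=\alpha_t^2(1+\nu_t)(1+1/\zeta_t)$ with $\nu_t,\zeta_t>0$, exactly as in the absolute-error proof. The first Schur complement produces a rank-one correction $[(1+\nu_t)(1+\zeta_t)-1]\,u u^T$ with $u=[1,-\alpha_t]^T$; the combinatorial structure is identical to the absolute-error case because it is determined by $B_t$. The only new feature is an extra $\delta_t^2$-dependent term in the $(2,2)$-entry of the resulting $2\times 2$ residual. Introducing $\tau_t:=2\lambda_t^{(1)}/[(1+\nu_t)(1+\zeta_t)]-\alpha_t^2$ and applying a second Schur complement yields a scalar feasibility condition of the form $\rho_t^2 \geq (1+\nu_t)(1+\zeta_t)\, h_\delta(\tau_t,\zeta_t)$, where $h_\delta$ is $h$ from \eqref{eq:h_tau} modified by an explicit $\delta_t^2/\zeta_t$ correction to the coefficient of the quadratic-in-$\alpha_t$ terms. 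Minimizing over $\tau_t$ gives a one-dimensional expression in $\zeta_t$ whose value depends on the sign of an effective ``$p$-term'' $p(\zeta_t):=\alpha_t-\alpha_t^2(1+\delta_t^2/\zeta_t)(L_t+m_t)/2$. The outer $\nu_t$ minimization then combines with $\sigma_t^2$ via the same AM-GM step used in Theorem~\ref{th:InexactOGD_abs_param} to yield the claimed $(\hat{\rho}_t\sqrt{\hat{U}_t}+\sigma_t)^2$ form, once $\hat{\rho}_t^2$ is identified as the minimum of $(1+\zeta_t)\min_{\tau_t}h_\delta(\tau_t,\zeta_t)$ over $\zeta_t>0$.

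The main obstacle is the three-way case analysis for this joint $(\tau_t,\zeta_t)$ minimization, which is the source of the piecewise definition of $\hat{\rho}_t$ in \eqref{eq:rho_delta}. The sign of $p(\zeta_t)$ splits the $\zeta_t$-axis at the threshold $\zeta_*:=\alpha_t\delta_t^2(L_t+m_t)/[2-\alpha_t(L_t+m_t)]$. In the $p>0$ regime ($\zeta_t>\zeta_*$) an AM-GM argument gives an interior stationary point $\zeta_t^\star = \alpha_t m_t\delta_t/(1-\alpha_t m_t)$ with value $(1-\alpha_t m_t(1-\delta_t))^2$; this stationary point is feasible (i.e., $\zeta_t^\star\geq\zeta_*$) precisely when $\alpha_t\leq\alpha_{t_-}$, giving Case~1 of \eqref{eq:rho_delta}. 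In the $p<0$ regime ($\zeta_t<\zeta_*$) the analogous computation yields $\zeta_t^\star=\alpha_t L_t\delta_t/(\alpha_t L_t-1)$ with value $((1+\delta_t)\alpha_t L_t-1)^2$, feasible when $\alpha_t\geq\alpha_{t_+}$, giving Case~3. In the intermediate range $\alpha_{t_-}\leq\alpha_t\leq\alpha_{t_+}$, neither interior stationary point is feasible, so the optimum is attained at the boundary $\zeta_t=\zeta_*$ where $p(\zeta_t)=0$; direct evaluation of $(1+\zeta_*)\bigl(1-2\alpha_t L_tm_t/(L_t+m_t)\bigr)$ yields the middle-branch formula. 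The hypothesis $\delta_t<2m_t/(L_t+m_t)$ is precisely what ensures $\alpha_{t_-}<\alpha_{t_+}$, so that the three regimes tile the stepsize interval, and the remaining technical work is to verify nonnegativity of the optimal SDP multipliers in each regime.
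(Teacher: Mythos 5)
Your proposal is correct and follows essentially the same route as the paper's proof: the same substitutions $\lambda_t^{(2)}=1+1/\nu_t$, $\lambda_t^{(3)}=\alpha_t^2(1+\nu_t)(1+1/\zeta_t)$, the same double Schur-complement reduction to a scalar bound on $\rho_t^2$, the same threshold $\bar\zeta_t=\alpha_t\delta_t^2(L_t+m_t)/(2-\alpha_t(L_t+m_t))$ governing the sign split, the same two interior minimizers $\alpha_t\delta_t m_t/(1-\alpha_t m_t)$ and $\alpha_t\delta_t L_t/(\alpha_t L_t-1)$ with boundary attainment at $\bar\zeta_t$ in the intermediate stepsize regime, and the same final AM--GM over $\nu_t$. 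The only cosmetic differences are your normalization of $\tau_t$ and phrasing the case analysis via feasibility of stationary points rather than the paper's explicit monotonicity argument.
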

\begin{proof}
Based on Proposition \ref{Prop:Greedy_app}, we need to solve the following problem:
\begin{equation}\label{Eq:InEx_OGD_rel_stg1}
\hat{U}_{t+1}= \minimize_{(\rho_t,\lambda_t^{(1)},\lambda_t^{(2)},\lambda_t^{(3)}  )\in\mathcal{T}_t} \rho_t^2 \hat{U}_t + \lambda_t^{(2)} \sigma_t^2.
\end{equation}
In order to solve \eqref{Eq:InEx_OGD_rel_stg1} analytically, we will apply a change of variables again. Set $\lambda_t^{(2)} = 1+\frac{1}{\nu_t}$ and $\lambda_t^{(3)} =\alpha_t^2\left( 1+\nu_t \right)\left(1+\frac{1}{\zeta_t}\right)$. Then we can apply the Schur complement to convert \eqref{eq:LMI_InexOGD_rel} to the following equivalent form:
\[
 \bmat{ \psi\left(\zeta_t,\nu_t\right)-\rho_t^2 -2 L_t m_t \lambda_t^{(1)}  & \lambda_t^{(1)} \left(L_t+m_t\right) -\alpha_t \psi\left(\zeta_t,\nu_t\right)  \\ \lambda_t^{(1)} \left(L_t+m_t\right)-\alpha_t \psi\left(\zeta_t,\nu_t\right)  & \alpha_t^2 \psi\left(\zeta_t,\nu_t\right) \left( 1+\frac{\delta_t^2}{\zeta_t} \right) -2\lambda_t^{(1)}  } \preceq 0, \,\, \zeta_t>0 , \,\, \nu_t>0,
\]
where $\psi(\zeta_t,\nu_t):=(1+\zeta_t)(1+\nu_t)$.
Then we set $\tau_t:=2 \lambda_t^{(1)}-\alpha_t^2\psi(\zeta_t,\nu_t)\left(1+\frac{\delta_t^2}{\zeta_t}\right)$. Consequently, we have $\lambda_t^{(1)} =\frac{1}{2} \left( \tau_t +\alpha_t^2 \psi(\zeta_t,\nu_t)\left(1+\frac{\delta_t^2}{\zeta_t}\right) \right)$.
By applying the Schur complement and setting $\chi_t:= \alpha_t \left( 1+\frac{\delta_t^2}{\zeta_t} \right)$, we can show that the above LMI holds if and only if $\zeta_t>0$, $\nu_t>0$, $\tau_t \geq 0$ and $h(\zeta_t,\nu_t,\tau_t)\le \rho_t^2$, where $h(\zeta_t,\nu_t,\tau_t)$ is defined as
\begin{align*}
        h\left( \zeta_t,\nu_t,\tau_t \right) = - \psi(\zeta_t,\nu_t)  & \left(\alpha_t L_t m_t\chi_t  -1 \right) +\frac{\alpha_t^2 \psi(\zeta_t,\nu_t)^2}{4 \tau_t} \left( 2-\chi_t  \left(L_t + m_t\right)  \right)^2 +\frac{\tau_t}{4} \\ 
        & \left(L_t-m_t\right)^2 -\frac{\alpha_t \psi(\zeta_t,\nu_t)}{2} \left(L_t+m_t\right)\left( 2-\chi_t \left(L_t+m_t\right)\right).
\end{align*}
Therefore, \eqref{Eq:InEx_OGD_rel_stg1} is equivalent to the following optimization:
\begin{align}\label{eq:opt_U_rela}
    \hat{U}_{t+1}&=\min_{\nu_t>0}\,\,\left(\min_{\zeta_t>0}\,\,\left(\min_{\tau_t\geq 0}\left( \min_{h\left( \zeta_t,\nu_t,\tau_t \right) \le \rho_t^2} \left(\rho_t^2\hat{U}_t + \left( 1+\frac{1}{\nu_t}\right)\sigma_t^2 \right)\right)\right) \right).
\end{align}
Notice $1-\alpha_t (m_t+L_t)+\frac{\alpha_t^2 (m_t^2+L_t^2)}{2}\ge 0$, and hence we have $h(\zeta_t, \nu_t, \tau_t)\ge 0$ for all $\nu_t>0$, $\zeta_t>0$, and $\tau_t\ge 0$. Hence we can simplify \eqref{eq:opt_U_rela} as
\begin{align*}
    \hat{U}_{t+1}&=\min_{\nu_t>0}\,\,\left(\min_{\zeta_t>0}\,\,\left(\min_{\tau_t\geq 0}\left( h(\zeta_t,\nu_t,\tau_t)\hat{U}_t + \left( 1+\frac{1}{\nu_t}\right)\sigma_t^2 \right)\right)\right)\\
    &=\min_{\nu_t>0}\,\,\left(\min_{\zeta_t>0}\,\,\left( \hat{h}(\zeta_t,\nu_t)\hat{U}_t + \left( 1+\frac{1}{\nu_t}\right)\sigma_t^2 \right)\right)\\
    &=\min_{\nu_t>0}\,\,\left(\left(\min_{\zeta_t>0}\hat{h}(\zeta_t,\nu_t)\right)\hat{U}_t + \left( 1+\frac{1}{\nu_t}\right)\sigma_t^2 \right),
\end{align*}
where $\hat{h}(\zeta_t,\nu_t):=\min_{\tau_t\ge 0} h(\zeta_t,\nu_t,\tau_t)$ yields the following explicit formula:
\begin{align}\label{eq:h_hat1}
    \begin{split}
        \hat{h}\left( \zeta_t,\nu_t \right) := \psi(\zeta_t,\nu_t) & \left(1-\alpha_t L_t m_t\chi_t \right) + \frac{\alpha_t \psi(\zeta_t,\nu_t) \left(L_t-m_t\right) }{2}  \Biggl| \alpha_t \left( L_t +m_t \right)  \\ 
        &\left(1 +\frac{\delta_t^2}{\zeta_t} \right)  - 2 \Biggr| -\frac{\alpha_t \psi(\zeta_t,\nu_t) \left(L_t+m_t\right)}{2} \left( 2-\chi_t \left(L_t+m_t\right)\right).
    \end{split}
\end{align}
Clearly, the above optimal $\hat{h}$ is achieved at
$\tau_t  =
\frac{\alpha_t \psi(\zeta_t,\nu_t)  }{L_t - m_t}\left| \alpha_t \left( L_t +m_t \right) \left(1 +\frac{\delta_t^2}{\zeta_t} \right) - 2 \right|$.
The rest of the proof relies on 
the following key observation: 
\begin{align}\label{eq:hath}
\min_{\zeta_t>0} \hat{h}(\zeta_t,\nu_t)=(1+\nu_t)\hat{\rho}_t^2,
\end{align}
where $\hat{\rho}_t$ is defined by \eqref{eq:rho_delta}. Once \eqref{eq:hath} is shown, it is trivial to verify
\[
\hat{U}_{t+1}=\min_{\nu_t>0} \left((1+\nu_t)\hat{\rho}_t^2 \hat{U}_t+\left(1+\frac{1}{\nu_t}\right)\sigma_t^2\right)=\left(\hat{\rho}_t \sqrt{\hat{U}_t}+\sigma_t\right)^2,
\]
which directly leads to the desired conclusion. 

To verify \eqref{eq:hath}, we will use \eqref{eq:h_hat1} to derive 
the analytical form of $\min_{\zeta_t>0} \hat{h}(\zeta_t,\nu_t)$, which heavily relies on the sign of
 $\bar{\zeta_t}:=\frac{\delta_t^2 \alpha_t \left( L_t +m_t \right)}{2-\alpha_t \left( L_t +m_t \right)}$.
If $\alpha_t \ge \frac{2}{L_t +m_t}$, then we have $\bar{\zeta_t}\le 0$. 
 In this case, we can apply \eqref{eq:h_hat1} to show
$\hat{h}\left( \zeta_t,\nu_t \right)=(1+\zeta_t)(1+\nu_t)\left( \left(\alpha_t L_t-1\right)^2+\frac{\alpha_t^2 \delta_t^2 L_t^2}{\zeta_t} \right)$ for all $\zeta_t>0$. Then we can minimize $\hat{h}$
via choosing $\zeta_t=\frac{\alpha_t \delta_t L_t}{|\alpha_t L_t-1|}$. Consequently, \eqref{eq:hath} holds for $\alpha\ge \frac{2}{m_t+L_t}$. Notice we require $\hat{\rho}_t<1$ such that the iterative bound makes sense. This poses an upper bound on the stepsize $\alpha_t$. Specifically, we need $\alpha_t\le \frac{2}{(1+\delta_t) L_t}$. 

If $\alpha_t< \frac{2}{m+L}$, we have
$\bar{\zeta_t} >0$. Then \eqref{eq:h_hat1} leads to
\begin{align}\label{eq:Inexact_OGD_rel_h} \hat{h} (\zeta_t, \nu_t) =\begin{cases}
(1+\zeta_t)(1+\nu_t)\left( \left(\alpha_t L_t-1\right)^2+\frac{\alpha_t^2 \delta_t^2 L_t^2}{\zeta_t} \right) \,\ &\text{if} \,\,\,  0 < \zeta_t \leq \bar{\zeta_t} \\
(1+\zeta_t)(1+\nu_t) \left( \left(1-\alpha_t m_t\right)^2+\frac{\alpha_t^2 \delta_t^2 m_t^2}{\zeta_t} \right) \,\ & \text{if} \,\,\,  \zeta_t>\bar{\zeta_t}  
\end{cases}
\end{align}
To obtain the analytical form of $\min_{\zeta_t>0} \hat{h}(\zeta_t,\nu_t)$, we define
 $\zeta_{tL}:=\frac{\alpha_t \delta_t L_t}{\left| \alpha_t L_t -1 \right| }$ and $\zeta_{tm}:=\frac{\alpha_t \delta_t m_t}{1- m_t \alpha_t} $. 
 Due to the fact $\alpha_t<\frac{2}{m_t+L_t}$, we can show $\zeta_{tm}\le \zeta_{tL}$. The analytical form of $\min_{\zeta_t>0} \hat{h}(\zeta_t,\nu_t)$ depends on the ordering relation between $\bar{\zeta}_t$ and $(\zeta_{tm},\zeta_{tL})$ as follows. 
 \begin{itemize}
     \item If $\bar{\zeta}_t\le \zeta_{tm}\le \zeta_{tL}$, we can fix $\nu_t$ and show $\hat{h}$ is monotonically decreasing for $0<\zeta_t\le \bar{\zeta}_t$. Since $\zeta_{tm}\ge \bar{\zeta}_t$, we know $\hat{h}$ is monotonically decreasing for $\zeta_t\le \zeta_{tm}$ and monotonically increasing for $\zeta_t\ge \zeta_{tm}$. Therefore, $\hat{h}$ attains the minimum value at $\zeta_t=\zeta_{tm}$. The relationship $\bar{\zeta}_t\le \zeta_{tm}$ can be equivalently transformed into the stepsize bound $0\le \alpha_t \le \alpha_{t_-} $. 
     
     \item If $ \zeta_{tm} \leq \bar{\zeta_t}  \leq \zeta_{tL}$, then $\hat{h}$ is monotonically decreasing for $0<\zeta_t \le \bar{\zeta}_t$ and monotonically increasing for $\zeta_t\ge \bar{\zeta}_t$. Hence $\hat{h}$ attains the global minimum at $\zeta_t=\bar{\zeta}_t$. 
     The relationship $ \zeta_{tm} \leq \bar{\zeta_t}\leq \zeta_{tL}$ can be transformed into the stepsize bound $\alpha_{t_-} \leq \alpha_t \leq \alpha_{t_+}$.
     
     \item If $ \zeta_{tm} < \zeta_{tL} \leq \bar{\zeta_t}$, we can apply a similar argument to show that $\hat{h}$ is monotonically decreasing for $\zeta_t \le \zeta_{tL}$
    and monotonically increasing for $\zeta_t\ge \zeta_{tL}$.  
     Therefore $\hat{h}$ attains its minimum at $\zeta_t=\zeta_{tL}$, and the bound $ \zeta_{tm} < \zeta_{tL} \leq \bar{\zeta_t}$ is equivalent to the stepzise bound $\alpha_t \ge \alpha_{t_+} $.
  \end{itemize}
To summarize, for any $\nu_t>0$, the function $\hat{h}(\zeta_t,\nu_t)$ is minimized via choosing $\zeta_t =\frac{\alpha_t \delta_t m_t}{1- m_t \alpha_t}$ for $0\le \alpha_t \le \alpha_{t_-}$, $\zeta_t = \frac{\delta_t^2 \alpha_t \left( L_t +m_t \right)}{2-\alpha_t \left( L_t +m_t \right)}$ for $\alpha_{t_-} \leq \alpha_t \leq \alpha_{t_+}$, and $\zeta_t =\frac{\alpha_t \delta_t L_t}{ \alpha_t L_t -1}$ for $\alpha_{t_+} \le \alpha_t \le \frac{2}{(1+\delta_t)L_t}$.
Hence \eqref{eq:hath} holds as desired, and this completes our proof.
\end{proof}

The above theorem characterizes the dependence of the tracking error on $\delta_t$. Obviously, increasing $\delta_t$ leads to slower convergence speed and larger steady-state error. If $\delta_t=0$ for all $t$, our bound reduces to the exact OGD bound, i.e. $\sqrt{\hat{U}_{t+1}}=\mu_t \sqrt{\hat{U}_t}+\sigma_t$. If $\sigma_t=0$ (i.e. $x_{t+1}^*=x_t^*$) for all $t$ and $(m_t,L_t,\delta_t)$ do not change over $t$, our bound reduces to \cite[Propositions 1.3 \& 1.4]{gannot2021frequency}, which give the convergence rate of the inexact gradient method subject to a relative error in the stationary cost setting.
\begin{remark}
The recursive bound $\hat{U}_{t+1} =(\hat{\rho}_t\sqrt{\hat{U}_t}+\sigma_t)^2$ can be further simplified if we take $L_t=L$, $m_t=m$, $\alpha_t=\alpha$ and $\delta \geq \delta_t$ for all $t$ . In this case, we have $\hat{\rho}_t=\hat{\rho}$.
The simplified convergence bound is given by $\sqrt{\hat{U}_{t+1}}=\hat{\rho}\sqrt{\hat{U}_t}+\sigma_t$ which leads us to the asymptotic bound $\sqrt{\hat{U}_t}= \hat{\rho}^t\left( \sqrt{\hat{U}_0} - \frac{\sigma}{1-\hat{\rho}} \right) + \frac{\sigma }{1-\hat{\rho}}$. Clearly, $\sqrt{\hat{U}_t}$ converges linearly to the steady state value $\sigma/(1-\hat{\rho})$ with the convergence rate quantified by $\hat{\rho}$.
\end{remark}

\subsection{Analysis in the inexact variational inequality setting}

Now we consider a different generalized setting where
the inexact OGD method is extended to  the variational inequality (VI) problem. 
At every time step~$t$, the VI problem considers 
finding the point $x_t^*$ such that $F_t(x_t^*)^\tp (x-x_t^*)\ge 0\,\,\forall x\in \mathcal{X}$, where $F_t:\R^p \rightarrow \R^p$ is a vector field, and $\mathcal{X}$ is a prescribed convex set. For simplicity, we consider $\mathcal{X}=\R^p$, and hence $x_t^*$ satisfies
$F_t(x_t^*)=0$. The following algorithm can be viewed as the extension of the inexact OGD method to the VI setting:
\begin{align}\label{eq:VI_OGD}
    x_{t+1}=x_t-\alpha_t (F_t(x_t)+v_t).
\end{align}
We assume $v_t$ is a bias term satisfying a general error bound $\norm{v_t}^2 \le \delta_t^2 \norm{F_t(x_t)}^2 + c_t^2 $ for some $\delta_t$ and $c_t$. 
If $F_t$ happens to be the gradient of some function $f_t$, then the above algorithm reduces to the inexact OGD method. However, the above VI setting is more general, and can cover the minimax game problem where \eqref{eq:VI_OGD} actually becomes the online version of the gradient descent-ascent method \cite{zhang2021unified}. 

We are interested in analyzing how closely the iterates $\{x_t\}$ generated by \eqref{eq:VI_OGD} can track the sequence $\{x_t^*\}$. We make the standard assumptions \cite{zhang2021unified} that $F_t$ is strongly monotone\footnote{The mapping $F_t$ is said to be $m_t$-strongly-monotone if $(F_t(x)-F_t(\tilde{x}))^\tp (x-\tilde{x})\ge m_t\norm{x-\tilde{x}}^2$ for all $x,\tilde{x}\in \R^p$ for a given $m_t>0$. } and Lipschitz. These assumptions lead to new supply rate conditions which are used to formulate the following LMI condition.
\begin{lemma}
\label{lem:VI}
Consider the update rule \eqref{eq:VI_OGD}.
For all $t$, we assume: i)  $F_t$ is $m_t$-strongly-monotone, ii) $F_t$ is $L_t$-Lipschitz, i.e. $\norm{F_t(x)-F_t(\tilde{x})}\le L_t \norm{x-\tilde{x}}$ for all $x,\tilde{x}\in \R^p$, iii) $\norm{v_t}^2 \le \delta_t^2 \norm{F_t(x_t)}^2 + c_t^2$, and  iv) $\|x_{t+1}^*-x_t^*\|\le \sigma_t$. If there exist non-negative scalars $(\rho_t, \lambda_t^{(1)}, \lambda_t^{(2)},\lambda_t^{(3)},\lambda_t^{(4)})$ such that
\begin{align}
\label{eq:LMI_VI}
\bmat{1-\rho_t^2 + \lambda_t^{(4)}L_t^2 -2 \lambda_t^{(1)}  m_t & \lambda_t^{(1)} -\alpha_t & 1  & -\alpha_t \\
\lambda_t^{(1)} -\alpha_t  & \alpha_t^2 -\lambda_t^{(4)} + \delta_t^2 \lambda_t^{(3)} & -\alpha_t & \alpha_t^2\\
1 & -\alpha_t & 1-\lambda_t^{(2) } & -\alpha_t \\
-\alpha_t & \alpha_t^2 & -\alpha_t & \alpha_t^2-\lambda_t^{(3)} }  \preceq 0,
\end{align}
then the iterates of \eqref{eq:VI_OGD} satisfy
$\norm{x_{t+1}-x_{t+1}^*}^2 \le \rho_t^2 \norm{x_t-x_t^*}^2+\lambda_t^{(2)} \sigma_t^2 +\lambda_t^{(3)} c_t^2$.
\end{lemma}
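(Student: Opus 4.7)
The plan is to mimic the structure used for Lemmas \ref{lem:InexactOGD_adv_abs} and \ref{lem:InexactOGD_adv_rel}: cast the update \eqref{eq:VI_OGD} as a special case of the generic dynamical system \eqref{eq:gen_algo}, translate each of the four structural hypotheses into a supply rate condition of the form \eqref{eq:gen_ineq}, and then appeal directly to Proposition \ref{prop:main} to obtain the LMI \eqref{eq:LMI_VI}. Since there are no algebraic minimizations to perform (unlike in Theorems \ref{th:InexactOGD_abs_param} and \ref{th:InexactOGD_rel_param}), the work lies entirely in correctly identifying the supply rate matrices and checking that their linear combination matches the displayed $4\times 4$ pattern.

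First, I would introduce $\xi_t := x_t - x_t^*$ and $e_t := x_t^* - x_{t+1}^*$. Subtracting $x_{t+1}^*$ from both sides of \eqref{eq:VI_OGD} and using that $F_t(x_t^*) = 0$ gives $\xi_{t+1} = \xi_t - \alpha_t F_t(x_t) + e_t - \alpha_t v_t$. With the choices $w_t = \bmat{F_t(x_t)^\tp & e_t^\tp & v_t^\tp}^\tp$ and $B_t = \bmat{-\alpha_t & 1 & -\alpha_t}$, this is exactly \eqref{eq:gen_algo}.

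Next I would enumerate four supply rate conditions indexed by $j = 1,\dots,4$. For (i), strong monotonicity combined with $F_t(x_t^*) = 0$ yields $F_t(x_t)^\tp \xi_t \geq m_t \|\xi_t\|^2$, which is encoded by $X_t^{(1)} = \diag(M_t, 0, 0)$ with $\Lambda_t^{(1)} = 0$, where $M_t$ is defined in \eqref{Eq:Mu_def}. For (ii), the bound $\|x_{t+1}^* - x_t^*\| \leq \sigma_t$ gives $X_t^{(2)} = \diag(0, 0, 1, 0)$ with $\Lambda_t^{(2)} = \sigma_t^2$. For (iii), rearranging the error bound $\|v_t\|^2 \leq \delta_t^2 \|F_t(x_t)\|^2 + c_t^2$ gives $X_t^{(3)} = \diag(0, -\delta_t^2, 0, 1)$ with $\Lambda_t^{(3)} = c_t^2$. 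For (iv), the $L_t$-Lipschitz property of $F_t$ combined with $F_t(x_t^*) = 0$ gives $\|F_t(x_t)\|^2 \leq L_t^2 \|\xi_t\|^2$, which is encoded by $X_t^{(4)} = \diag(-L_t^2, 1, 0, 0)$ with $\Lambda_t^{(4)} = 0$. Invoking Proposition \ref{prop:main} with these four pairs and computing $\sum_j \lambda_t^{(j)} X_t^{(j)}$ then produces exactly the LMI \eqref{eq:LMI_VI}, and the stated tracking error inequality follows because $\Lambda_t^{(1)} = \Lambda_t^{(4)} = 0$ while $\Lambda_t^{(2)} = \sigma_t^2$ and $\Lambda_t^{(3)} = c_t^2$.

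The only real hazard is bookkeeping: matching the ten independent entries of the symmetric $4 \times 4$ LMI against contributions from four differently-placed sparse supply rate matrices. Unlike in the OGD setting with $f_t \in \mathcal{S}(m_t, L_t)$, where the single sector inequality \eqref{eq:gradient3} encoded in $Y_t$ simultaneously captures both monotonicity and Lipschitz behavior, here $F_t$ is \emph{not} assumed to be a gradient, so these two properties must be supplied as two separate quadratic constraints. This is precisely why \eqref{eq:LMI_VI} carries the extra multiplier $\lambda_t^{(4)}$ (producing $+L_t^2 \lambda_t^{(4)}$ at position $(1,1)$ and $-\lambda_t^{(4)}$ at position $(2,2)$) that does not appear in \eqref{eq: LMI_InExOGD_Abs} or \eqref{eq:LMI_InexOGD_rel}, and why the supply rate for (i) uses $M_t$ rather than the sector matrix $Y_t$.
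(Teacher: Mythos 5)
Your proposal is correct and matches the paper's proof essentially verbatim: the same state/input identification $\xi_t = x_t - x_t^*$, $w_t = \bmat{F_t(x_t)^\tp & e_t^\tp & v_t^\tp}^\tp$, $B_t = \bmat{-\alpha_t & 1 & -\alpha_t}$, and the same four supply rate pairs $(X_t^{(j)},\Lambda_t^{(j)})$, with Proposition \ref{prop:main} yielding the LMI \eqref{eq:LMI_VI}. Your closing observation — that monotonicity and Lipschitzness must be encoded as two separate quadratic constraints (hence the extra multiplier $\lambda_t^{(4)}$) because $F_t$ need not be a gradient — is exactly the right reading of why this LMI differs from the OGD ones.
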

\begin{proof}
Denote $e_t= x_t^* -x_{t+1}^*$. Then \eqref{eq:VI_OGD} can be rewritten as
$x_{t+1}- x_{t+1}^*=x_t-x_t^*-\alpha_t F_t(x_t)+e_t-\alpha_t v_t$, which is a special case of \eqref{eq:gen_algo} with $\xi_t=x_t-x_t^*$, $B_t=\bmat{-\alpha_t & 1 & -\alpha_t}$, and $w_t=\bmat{F_t(x_t)^\tp &  e_t^\tp&  v_t^\tp}^\tp$. Since $F_t$ is assumed to be $m_t$-strongly-monotone, we know $F_t(x_t)^\tp (x_t-x_t^*)\ge m_t\norm{x_t-x_t^*}^2$, 
Denoting $M_t=\bmat{2m_t & -1 \\ -1 & 0}$, we 
obtain a supply rate
 \eqref{eq:gen_ineq} with $\left(X_t^{(1)},\Lambda_t^{(1)} \right) = \left( \bmat{M_t & 0_{2\times 2} \\ 0_{2\times 2} & 0_{2\times 2}} ,0 \right)$.
The assumption $\|x_{t+1}^*-x_t^*\|\le \sigma_t$ leads to the second supply rate condition with $X_t^{(2)}= \diag(0,0,1,0)$ and $\Lambda_t^{(2)}=\sigma_t^2$. The error bound on $v_t$ can be recast as the third supply condition with $X_t^{(3)} = \diag( 0,-\delta_t^2,0,1)$ and $\Lambda_t^{(3)}=c_t^2$. Finally, the assumption that $F_t$ is $L_t$-Lipschitz leads to the fourth supply rate condition with $X_t^{(4)}= \diag(-L_t^2,1,0,0)$ and $\Lambda_t^{(4)}=0$. Then we can directly apply Proposition \ref{prop:main} to obtain the LMI condition~\eqref{eq:LMI_VI}.
\end{proof}
Similarly, we can solve the resultant sequential SDP and obtain the following bound. 
\begin{theorem}\label{th:optimal_param_VI}
Suppose $\alpha_t\le \frac{2(m_t-\delta_t L_t)}{L_t^2 (1-\delta_t^2)}$ and $\delta_t \leq \frac{m_t}{L_t}$ for all $t$. Let $\mathcal{T}_t \subseteq \R_+^5$ be the set of tuples $( \rho_t, \lambda_t^{(1)} , \lambda_t^{(2)}, \lambda_t^{(3)}, \lambda_t^{(4)})$ that satisfy \eqref{eq:LMI_VI}. Suppose the assumptions in Lemma~\ref{lem:VI} hold. Then the bound $\hat{U}_t$ defined in \eqref{Eq:Optimal_U} with $J=4$ is given by
\begin{align}
  \label{Eq:U_VI}
  \hat{U}_{t+1} = 
     \left( \sqrt{(1-2 m_t \alpha_t + \alpha_t^2 L_t^2) \hat{U}_t} +\alpha_t \sqrt{c_t^2+\delta_t^2 L_t^2 \hat{U}_t}+\sigma_t \right)^2.
\end{align}
\end{theorem}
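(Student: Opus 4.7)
The plan is to adapt the sequential Schur-complement technique developed in the proofs of Theorems~\ref{th:InexactOGD_abs_param} and~\ref{th:InexactOGD_rel_param} to the five-multiplier LMI~\eqref{eq:LMI_VI}. By Proposition~\ref{Prop:Greedy_app} the problem reduces to
\[
\hat{U}_{t+1} = \min_{(\rho_t, \lambda_t^{(1)}, \lambda_t^{(2)}, \lambda_t^{(3)}, \lambda_t^{(4)}) \in \mathcal{T}_t} \rho_t^2 \hat{U}_t + \lambda_t^{(2)} \sigma_t^2 + \lambda_t^{(3)} c_t^2.
\]
The first step is to parametrize the multipliers of the two ``disturbance'' rows by $\lambda_t^{(2)} = 1 + \frac{1}{\nu_t}$ and $\lambda_t^{(3)} = \alpha_t^2 (1+\nu_t)\bigl(1+\frac{1}{\zeta_t}\bigr)$ with $\nu_t, \zeta_t > 0$, and then eliminate rows/columns $3$ and $4$ via two successive Schur complements, exactly as in the proof of Theorem~\ref{th:InexactOGD_rel_param}. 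This leaves a $2\times 2$ LMI on the $(\xi_t, F_t(x_t))$ block whose only structural difference from that in Theorem~\ref{th:InexactOGD_rel_param} is that the strong-monotonicity multiplier $\lambda_t^{(1)}$ now appears linearly rather than in sector form, and that the Lipschitz multiplier $\lambda_t^{(4)}$ contributes $+\lambda_t^{(4)}L_t^2$ to the $(1,1)$ entry and $-\lambda_t^{(4)}$ to the $(2,2)$ entry.

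Next I would introduce a slack $\tau_t \geq 0$ playing the role of the Schur complement of the $(2,2)$ entry, solve for $\lambda_t^{(1)}$ in terms of $\tau_t, \zeta_t, \nu_t, \lambda_t^{(4)}$, and apply one more Schur complement to obtain a scalar bound $\rho_t^2 \geq h(\zeta_t, \nu_t, \tau_t, \lambda_t^{(4)})$ with the familiar $\frac{(\cdot)^2}{\tau_t}$ shape plus an affine-in-$\lambda_t^{(4)}$ piece. Minimizing $h$ over $\tau_t \geq 0$ by the AM--GM balancing used in the earlier proofs collapses $\tau_t$, and minimizing the result over $\lambda_t^{(4)} \geq 0$ should be a simple univariate minimization whose optimum lands where the Lipschitz multiplier exactly compensates the strong-monotonicity and relative-error contributions, yielding
\[
\min_{\tau_t \geq 0,\,\lambda_t^{(4)} \geq 0} h(\zeta_t,\nu_t,\tau_t,\lambda_t^{(4)}) = (1+\zeta_t)(1+\nu_t)\Bigl(1 - 2m_t\alpha_t + \alpha_t^2 L_t^2 + \frac{\alpha_t^2 \delta_t^2 L_t^2}{\zeta_t}\Bigr).
\]

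The final step is the joint minimization over $\zeta_t, \nu_t > 0$ of
\[
(1+\zeta_t)(1+\nu_t)\Bigl(1 - 2m_t\alpha_t + \alpha_t^2 L_t^2 + \frac{\alpha_t^2 \delta_t^2 L_t^2}{\zeta_t}\Bigr)\hat{U}_t + \alpha_t^2(1+\nu_t)\Bigl(1+\frac{1}{\zeta_t}\Bigr)c_t^2 + \Bigl(1+\frac{1}{\nu_t}\Bigr)\sigma_t^2.
\]
Using $(1+\zeta_t)\cdot \frac{\alpha_t^2 \delta_t^2 L_t^2}{\zeta_t} = \bigl(1+\frac{1}{\zeta_t}\bigr)\alpha_t^2 \delta_t^2 L_t^2$, the $\hat{U}_t$- and $c_t^2$-terms regroup cleanly as $(1+\zeta_t)A^2 + (1+\frac{1}{\zeta_t})B^2$ with $A = \sqrt{(1 - 2m_t\alpha_t + \alpha_t^2 L_t^2)\hat{U}_t}$ and $B = \alpha_t\sqrt{c_t^2 + \delta_t^2 L_t^2 \hat{U}_t}$. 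The Cauchy--Schwarz identity $\min_{s>0}[(1+s)A^2 + (1+\frac{1}{s})B^2] = (A+B)^2$ used in Theorem~\ref{th:InexactOGD_abs_param} collapses the inner minimization to $(1+\nu_t)(A+B)^2$, and applying the same identity once more over $\nu_t$ with $(A+B)$ and $\sigma_t$ gives the claimed formula~\eqref{Eq:U_VI}.

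The main obstacle I anticipate is the joint $(\tau_t, \lambda_t^{(4)})$ step: unlike in the previous theorems, $\lambda_t^{(4)}$ is a new free multiplier entering the LMI nontrivially, so one must carefully verify that the stepsize/bias restrictions $\alpha_t \leq \frac{2(m_t - \delta_t L_t)}{L_t^2(1-\delta_t^2)}$ and $\delta_t \leq m_t/L_t$ are precisely what is needed both to keep the optimal $\lambda_t^{(4)}$ nonnegative and to keep the contraction factor $1 - 2m_t\alpha_t + \alpha_t^2 L_t^2 + \alpha_t^2 \delta_t^2 L_t^2/\zeta_t$ strictly below one at its optimal $\zeta_t$, so that the recursion in~\eqref{Eq:U_VI} is meaningful. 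The remaining Schur-complement and Cauchy--Schwarz bookkeeping should then be routine.
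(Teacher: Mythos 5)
Your proposal follows the paper's own proof (Appendix~A) essentially step for step: the same reparametrization $\lambda_t^{(2)}=1+\tfrac{1}{\nu_t}$, $\lambda_t^{(3)}=\alpha_t^2(1+\nu_t)(1+\tfrac{1}{\zeta_t})$, the same two Schur complements down to a $2\times 2$ block, the same slack $\tau_t$ for the $(2,2)$ entry, the identical intermediate value $\min h=(1+\zeta_t)(1+\nu_t)\bigl(1-2m_t\alpha_t+\alpha_t^2L_t^2+\tfrac{\alpha_t^2\delta_t^2L_t^2}{\zeta_t}\bigr)$, and the same final regrouping plus double application of $\min_{s>0}[(1+s)A^2+(1+\tfrac1s)B^2]=(A+B)^2$. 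The only minor discrepancy is mechanical: in the paper one optimizes the remaining free multiplier $\lambda_t^{(1)}$ first (optimum $\lambda_t^{(1)}=\tau_t m_t+\alpha_t\psi$), after which $h$ is affine in $\tau_t$ with slope $L_t^2-m_t^2\ge 0$, so $\tau_t$ collapses to the boundary $\tau_t=0$ rather than to an AM--GM balance point --- but this does not change the outcome you state.
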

\begin{proof}
The proof still relies on the Schur complement lemma and some algebraic manipulations.
We defer the detailed proof to the appendix.
\end{proof}

From \eqref{Eq:U_VI}, we have 
$\sqrt{{U}_{t+1}}\le (\sqrt{1-2 m_t \alpha_t + \alpha_t^2 L_t^2}+\alpha_t L_t\delta_t)\sqrt{\hat{U}_t}+\alpha_t c_t+\sigma_t$.  The conditions  $\alpha_t\le \frac{2(m_t-\delta_t L_t)}{L_t^2 (1-\delta_t^2)}$ and $\delta_t \leq \frac{m_t}{L_t}$ ensure $\sqrt{1-2 m_t \alpha_t + \alpha_t^2 L_t^2}+\alpha_t L_t\delta_t\le 1$. We can further derive a dynamic regret bound. Such a result will be presented in Section \ref{sec:dynamicregret}.
If $L_t=L$, $m_t=m$, $\delta_t=\delta$, $c_t=c$, $\sigma_t=\sigma$, and $\alpha_t=\alpha$ for all $t$, then we can easily show that $\hat{U}_t$ converges linearly to a small ball as follows:
\begin{align*}
    \hat{U}_t\le (\sqrt{1-2 m \alpha + \alpha^2 L^2}+\alpha L\delta)^t \hat{U}_0+\frac{\alpha c +\sigma}{1-\sqrt{1-2 m \alpha + \alpha^2 L^2}+\alpha L\delta}.
\end{align*}

\begin{remark}\label{remark2}
Our proof for Theorem \ref{th:optimal_param_VI} can be easily modified to show that the recursive bound in \cite[Remark 3]{hu2021analysis} actually gives the exact solution for the sequential SDP in  \cite[Proposition 1]{hu2021analysis}. 
Hence our proof technique bridges the analysis gap in~\cite{hu2021analysis}. More explanations are provided in the appendix.
\end{remark}

\section{Tracking error bounds for online stochastic gradient methods}
\label{sec:Inexact_SGD}
In this section, we consider the case where the inexactness in the online gradient oracle has a stochastic nature.
We will tailor our sequential SDP approach for analyzing the mean-square error of such stochastic online algorithms.  

\subsection{Dissipativity theory for stochastic dynamic systems}
The dissipativity framework presented in Section~\ref{sec:prelim} can be tailored to analyze dynamic systems subject to stochastic noise \cite{hu2018dissipativity,hu2021analysis}.
Consider the dynamic system \eqref{eq:gen_algo} with the input sequence $\{w_t\}$ being a stochastic process.
We can modify Proposition \ref{prop:main} as follows.
\begin{proposition}\label{prop:main_stoc}
Consider the system \eqref{eq:gen_algo} with $\{w_t\}$ being a stochastic process. 
Suppose  $\{\xi_t,w_t\}$ satisfies the following expected supply rate condition for $j=1,\cdots, J$:
\begin{align}\label{eq:exp_supply}
  \mathbb{E} \left( \bmat{ \xi_t \\ w_t }^T (X_t^{(j)}\otimes I_p) \bmat{ \xi_t \\ w_t } \right)\le \Lambda_t^{(j)}.
\end{align}
 If there exist non-negative scalars $\{\lambda_t^{(j)}\}_{j=1}^J$ and $\rho_t$
 such that the LMI condition \eqref{eq:LMI1} holds,
then one must have
$\mathbb{E}\norm{\xi_{t+1}}^2 \le \rho_t^2 \mathbb{E}\norm{\xi_t}^2 + \sum_{j=1}^J \lambda_t^{(j)} \Lambda_t^{(j)}$.
In addition, if \eqref{eq:exp_supply} holds as an equality for $j\in \Omega$ where $\Omega $ is a subset of $\{1,2,\ldots,J\}$, then $\lambda_t^{(j)}$ is allowed to be negative for any $j\in \Omega$ and the LMI condition still holds. 
\end{proposition}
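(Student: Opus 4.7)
The plan is to closely mirror the proof of Proposition~\ref{prop:main}, inserting expectations at the appropriate step so that the deterministic inequalities become inequalities on second moments. First I would start from the assumed LMI condition~\eqref{eq:LMI1} and take a Kronecker product with $I_p$ on both sides, then sandwich by the augmented vector $\bmat{\xi_t^\tp & w_t^\tp}^\tp$ to obtain the pointwise quadratic inequality
\begin{equation*}
\bmat{\xi_t \\ w_t}^\tp \left(\bmat{1-\rho_t^2 & B_t \\ B_t^\tp & B_t^\tp B_t}\otimes I_p\right) \bmat{\xi_t \\ w_t} \le \sum_{j=1}^J \lambda_t^{(j)} \bmat{\xi_t \\ w_t}^\tp \left(X_t^{(j)}\otimes I_p\right) \bmat{\xi_t \\ w_t}.
\end{equation*}
Exactly as in the deterministic proof, the left-hand side simplifies using the state-space update~\eqref{eq:gen_algo} to $\|\xi_{t+1}\|^2-\rho_t^2\|\xi_t\|^2$.

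Next I would take expectations on both sides. Linearity of expectation pushes $\mathbb{E}$ inside the sum on the right, yielding
\begin{equation*}
\mathbb{E}\|\xi_{t+1}\|^2 - \rho_t^2\,\mathbb{E}\|\xi_t\|^2 \le \sum_{j=1}^J \lambda_t^{(j)}\, \mathbb{E}\!\left[\bmat{\xi_t\\w_t}^\tp (X_t^{(j)}\otimes I_p) \bmat{\xi_t\\w_t}\right].
\end{equation*}
For each $j\notin \Omega$, the expected supply rate condition~\eqref{eq:exp_supply} is only an inequality, so I would use $\lambda_t^{(j)}\ge 0$ to conclude that the corresponding summand is bounded above by $\lambda_t^{(j)}\Lambda_t^{(j)}$. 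Summing over $j$ immediately gives the desired one-step mean-square bound.

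The only subtlety — and the reason for the second half of the statement — is that for $j\in\Omega$ the expected supply rate holds with equality, so $\mathbb{E}[\cdots]=\Lambda_t^{(j)}$ and the product $\lambda_t^{(j)}\Lambda_t^{(j)}$ appears on the right regardless of the sign of $\lambda_t^{(j)}$. Thus I can relax the non-negativity requirement for these indices without breaking the chain of inequalities; the LMI~\eqref{eq:LMI1} is still required, it simply now permits solutions with some negative multipliers. I would close by noting that this flexibility will be important in the stochastic setting when certain supply rates are noise-variance identities rather than conservative bounds. No induction step is needed here since the proposition only asserts a one-step recursion; iteration to obtain a closed-form mean-square bound is handled by the analogue of Proposition~\ref{Prop:Greedy_app}. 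There is no real obstacle in this proof — the only technical care is in keeping track of which supply-rate indices are equalities versus inequalities so that the sign constraints on $\lambda_t^{(j)}$ are correctly relaxed.
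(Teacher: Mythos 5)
Your proposal is correct and follows essentially the same route as the paper: the paper's proof likewise notes that the pointwise quadratic inequality \eqref{eq:step1} holds almost surely under \eqref{eq:LMI1}, takes expectations, and invokes \eqref{eq:exp_supply}, with the same observation that equality-type supply rates let the corresponding multipliers be negative. Your write-up is just a more explicit version of the paper's sketch.
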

\begin{proof}
From \eqref{eq:LMI1}, we know \eqref{eq:step1} holds almost surely. Then we can take the expectation of both sides of \eqref{eq:step1}, and apply the expected supply rate condition~\eqref{eq:exp_supply} to get the desired conclusion\footnote{Consider the case where \eqref{eq:exp_supply} holds as an equality for $j\in \Omega$. For any $j\in \Omega$, we can allow $\lambda_t^{(j)}$ to be negative, and it is straightforward to verify that the desired conclusion still holds.}.
\end{proof}

Based on the above result, we can still apply the sequential SDP approach in Proposition \ref{Prop:Greedy_app} to obtain mean-square tracking error bounds for online optimization methods subject to stochastic noise. Next, we will present two case studies. 

\subsection{Analysis of stochastic OGD with additive IID noise}
Now we study the performance of the inexact OGD scheme \eqref{eq:inexactOGD} under the alternative assumption that the sequence $\{v_t\}$ is a zero-mean IID process satisfying the mean-square error bound $\mathbb{E}\norm{v_t}^2\le c_t^2$.  
We can apply Proposition~\ref{eq:exp_supply} to obtain the following result.

\begin{lemma}
\label{lem:stoc_OGD} Consider the recursion \eqref{eq:inexactOGD}. For all $t$,
we assume : i)  $f_t \in \mathcal{S}\left(m_t, L_t\right)$, ii) $\|x_{t+1}^*-x_t^*\|\le \sigma_t$, and iii) $\{v_t\}$ is a zero-mean IID process satisfying $\mathbb{E}\norm{v_t}^2\le c_t^2$. If there exist non-negative scalars $(\rho_t, \lambda_t^{(1)}, \lambda_t^{(2)}, \lambda_t^{(3)} )$ and real scalars $(\lambda_t^{(4)},\lambda_t^{(5)}, \lambda_t^{(6)})$ such that the following matrix inequality holds (the definition of $Y_t$ is given by \eqref{Eq:Mu_def})

\begin{align}
\label{eq:LM_SGD}
\bmat{1-\rho^2 & -\alpha_t & 1 & -\alpha_t-\lambda_t^{(4)}\\ -\alpha_t & \alpha_t^2 & -\alpha_t & \alpha_t^2-\lambda_t^{(5)}\\ 1 & -\alpha_t & 1-\lambda_t^{(2)} & -\alpha_t-\lambda_t^{(6)} \\ -\alpha_t-\lambda_t^{(4)} & \alpha_t^2-\lambda_t^{(5)} & -\alpha_t-\lambda_t^{(6)} & \alpha_t^2-\lambda_t^{(3)}}-\lambda_t^{(1)} \bmat{ Y_t& 0_{2\times 2}\\0_{2\times 2} & 0_{2\times 2}}\preceq 0,
\end{align}
then we must have
$\mathbb{E}\norm{x_{t+1}-x_{t+1}^*}^2 \le \rho_t^2 \mathbb{E}\norm{x_t-x_t^*}^2+\lambda_t^{(2)} \sigma_t^2 +\lambda_t^{(3)} c_t^2$.
\end{lemma}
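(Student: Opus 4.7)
The plan is to apply Proposition~\ref{prop:main_stoc} directly, using the stochastic dissipativity machinery developed in Section~\ref{sec:Inexact_SGD}. First I would rewrite the iteration as $x_{t+1}-x_{t+1}^* = x_t - x_t^* - \alpha_t \nabla f_t(x_t) + e_t - \alpha_t v_t$ with $e_t := x_t^* - x_{t+1}^*$, so that defining $\xi_t := x_t - x_t^*$, $w_t := \bmat{\nabla f_t(x_t)^\tp & e_t^\tp & v_t^\tp}^\tp$, and $B_t := \bmat{-\alpha_t & 1 & -\alpha_t}$ puts the recursion in the form~\eqref{eq:gen_algo}. A direct computation then gives
\[
\bmat{1-\rho_t^2 & B_t \\ B_t^\tp & B_t^\tp B_t} = \bmat{1-\rho_t^2 & -\alpha_t & 1 & -\alpha_t\\ -\alpha_t & \alpha_t^2 & -\alpha_t & \alpha_t^2\\ 1 & -\alpha_t & 1 & -\alpha_t\\ -\alpha_t & \alpha_t^2 & -\alpha_t & \alpha_t^2},
\]
which matches the base part of the LMI \eqref{eq:LM_SGD}.

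Next I would list the expected supply rate conditions \eqref{eq:exp_supply} feeding into Proposition~\ref{prop:main_stoc}. Three of them are \emph{inequality} supply rates handled exactly as in Lemma~\ref{lem:InexactOGD_adv_abs}: (i) $f_t\in \mathcal{S}(m_t,L_t)$ gives $X_t^{(1)} = \diag(Y_t,0,0)$, $\Lambda_t^{(1)}=0$; (ii) $\|x_{t+1}^*-x_t^*\|\le \sigma_t$ gives $X_t^{(2)} = \diag(0,0,1,0)$, $\Lambda_t^{(2)} = \sigma_t^2$; (iii) $\mathbb{E}\|v_t\|^2 \le c_t^2$ gives $X_t^{(3)} = \diag(0,0,0,1)$, $\Lambda_t^{(3)} = c_t^2$. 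Subtracting $\lambda_t^{(1)} X_t^{(1)}$, $\lambda_t^{(2)} X_t^{(2)}$, and $\lambda_t^{(3)} X_t^{(3)}$ accounts for the $Y_t$ block, the $-\lambda_t^{(2)}$ on the $(3,3)$ entry, and the $-\lambda_t^{(3)}$ on the $(4,4)$ entry of \eqref{eq:LM_SGD}.

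The new ingredient, which I would emphasize next, is three \emph{equality} supply rates that originate from the zero-mean IID assumption. Since $v_t$ is independent of the entire past $\{v_0,\ldots,v_{t-1}\}$ and the objective sequence $\{f_s\}$ is deterministic, the quantities $\xi_t$, $\nabla f_t(x_t)$, and $e_t$ are all independent of $v_t$. Combined with $\mathbb{E} v_t = 0$ this yields $\mathbb{E}[\xi_t^\tp v_t] = 0$, $\mathbb{E}[\nabla f_t(x_t)^\tp v_t] = 0$, and $\mathbb{E}[e_t^\tp v_t] = 0$. These correspond to supply rates with $X_t^{(4)} = E_{14}+E_{41}$, $X_t^{(5)} = E_{24}+E_{42}$, $X_t^{(6)} = E_{34}+E_{43}$ (symmetric matrices with $1$'s in the indicated entries), each with $\Lambda_t^{(j)} = 0$ and all of them holding with equality. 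By the second half of Proposition~\ref{prop:main_stoc}, the multipliers $\lambda_t^{(4)}, \lambda_t^{(5)}, \lambda_t^{(6)}$ for these equality supply rates are allowed to be real (not necessarily non-negative), which exactly matches the sign constraints in the lemma statement. Subtracting $\lambda_t^{(4)} X_t^{(4)} + \lambda_t^{(5)} X_t^{(5)} + \lambda_t^{(6)} X_t^{(6)}$ places the entries $-\alpha_t - \lambda_t^{(4)}$, $\alpha_t^2 - \lambda_t^{(5)}$, and $-\alpha_t - \lambda_t^{(6)}$ symmetrically in row/column $4$, exactly reproducing \eqref{eq:LM_SGD}.

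Finally, Proposition~\ref{prop:main_stoc} immediately yields $\mathbb{E}\|\xi_{t+1}\|^2 \le \rho_t^2 \mathbb{E}\|\xi_t\|^2 + \lambda_t^{(2)}\sigma_t^2 + \lambda_t^{(3)} c_t^2$ (the $\Lambda_t^{(j)}$ for $j\in\{1,4,5,6\}$ all vanish), which is the desired bound. I do not expect a serious obstacle here; the only subtle point is the equality-supply-rate bookkeeping, and that is already accommodated by the second half of Proposition~\ref{prop:main_stoc}. All that needs care is the independence argument justifying $\mathbb{E}[\nabla f_t(x_t)^\tp v_t] = 0$, which hinges on $v_t$ being independent of $x_t = x_t(v_0,\ldots,v_{t-1})$, a direct consequence of the IID hypothesis.
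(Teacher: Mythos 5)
Your proposal is correct and follows essentially the same route as the paper's proof: the same state-space embedding, the same three inequality supply rates carried over from the absolute-error case, and the same three equality supply rates $\mathbb{E}[\xi_t^\tp v_t]=\mathbb{E}[\nabla f_t(x_t)^\tp v_t]=\mathbb{E}[e_t^\tp v_t]=0$ with sign-unconstrained multipliers justified by the second half of Proposition~\ref{prop:main_stoc}. Your write-up is somewhat more explicit than the paper's (spelling out the independence of $v_t$ from $x_t$ and the exact placement of the off-diagonal entries), but there is no substantive difference.
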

\begin{proof}
Set $\xi_t=x_t-x_t^*$, $w_t=\bmat{ \nabla f_t(x_t)^\tp &  e_t^\tp &  v_t^\tp}^\tp$, and  $B_t=\bmat{-\alpha_t & 1 & -\alpha_t}$. 
The assumptions (i) and (ii) allow us to specify the first two expected supply rate conditions with 
 $(X_t^{(1)},\Lambda_t^{(1)})=(\diag(Y_t,0,0),0)$ and $(X_t^{(2)},\Lambda_t^{(2)})=(\diag(0,0,1,0),\sigma_t^2)$. The mean-square error bound $\mathbb{E} \|v_t\|^2\le c_t^2$ can be rewritten as another expected supply rate condition with $(X_t^{(3)},\Lambda_t^{(3)})=(\diag(0,0,0,1),c_t^2)$. Based on the fact that $\{v_t\}$ is a zero-mean IID process, we have $\mathbb{E} \left( v_t^\tp (x_t-x_t^*) \right)=0$, $ \mathbb{E} \left( v_t^\tp \nabla f_t(x_t)\right) =0$, and  $\mathbb{E} \left( v_t^\tp e_t\right)=0$. These three conditions lead to the expected supply rate conditions in the equality form and can be combined with the first three supply rate conditions to derive the desired LMI condition.
  Notice that $(\lambda_t^{(4)},\lambda_t^{(5)},\lambda_t^{(6)})$ are allowed to be negative since the associated supply rate conditions are in the equality form.
\end{proof}

Next, we can apply the sequential SDP approach to obtain the following recursive mean-square tracking error bound\footnote{In this case, we will have $\mathbb{E}\norm{x_t-x_t^*}^2\le \hat{U}_t$, where $\hat{U}_t$ is solved from the sequential SDP problem in Proposition \ref{Prop:Greedy_app}.}. 

\begin{theorem}\label{th:stoc_OGD_param}
Suppose $\alpha_t\le \frac{2}{L_t}$. Let $\mathcal{T}_t \subseteq \R_+^4\times \R^3$ denote the set of tuples $\left( \rho_t, \lambda_t^{(1)} , \hdots , \lambda_t^{(6)} \right) $ that satisfy the LMI condition \eqref{eq:LM_SGD} .  If the assumptions in \eqref{lem:stoc_OGD} hold, then the bound $\hat{U}_t$ defined in \eqref{Eq:Optimal_U} with $J=6$  is exactly given by
\begin{align}
  \label{Eq:U_SOGD_abs_full}
  \hat{U}_{t+1} = 
     \left( \mu_t \sqrt{\hat{U}_t}+\sigma_t \right)^2 + \alpha_t^2 c_t^2. 
\end{align}
\end{theorem}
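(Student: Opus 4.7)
The plan is to exploit a clean decoupling special to the stochastic setting. Because the supply rates for $v_t$ that come from $v_t$ being zero-mean IID hold with equality in expectation, the multipliers $\lambda_t^{(4)}, \lambda_t^{(5)}, \lambda_t^{(6)}$ are unrestricted in sign; moreover, these three multipliers appear only in the off-diagonal entries $(1,4), (2,4), (3,4)$ (and their symmetric counterparts) of the $4\times 4$ LMI \eqref{eq:LM_SGD}, and the cost $\rho_t^2 \hat{U}_t + \lambda_t^{(2)}\sigma_t^2 + \lambda_t^{(3)} c_t^2$ does not depend on them at all. Thus they act purely as free parameters that can be tuned to simplify the matrix without changing the objective.

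The key structural lemma is the following ``iff'': the $4\times 4$ LMI is feasible for some $(\lambda_t^{(4)},\lambda_t^{(5)},\lambda_t^{(6)}) \in \mathbb{R}^3$ if and only if (a) its $3\times 3$ principal submatrix indexed by the first three rows/columns is $\preceq 0$, and (b) its scalar $(4,4)$ entry $\alpha_t^2 - \lambda_t^{(3)}$ is $\le 0$. The forward direction is just the standard fact that a principal submatrix of a negative semidefinite matrix is itself negative semidefinite, noting that neither of these two diagonal blocks depends on $\lambda_t^{(4,5,6)}$. The reverse direction is constructive: the choices $\lambda_t^{(4)} = -\alpha_t$, $\lambda_t^{(5)} = \alpha_t^2$, $\lambda_t^{(6)} = -\alpha_t$ zero out all three off-diagonal entries, producing a block-diagonal matrix whose two blocks are exactly the objects in (a) and~(b).

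Once this equivalence is in hand, the sequential SDP decomposes into two independent subproblems. The $3\times 3$ submatrix in (a) is \emph{verbatim} the exact-OGD LMI \eqref{eq:LMI_OGD}, so by Remark~\ref{remark1} its optimum over $(\rho_t,\lambda_t^{(1)},\lambda_t^{(2)})$ for the cost $\rho_t^2 \hat{U}_t + \lambda_t^{(2)}\sigma_t^2$ equals $(\mu_t\sqrt{\hat{U}_t}+\sigma_t)^2$ (this is where the assumption $\alpha_t \le 2/L_t$ is used). The scalar constraint (b) forces $\lambda_t^{(3)} \ge \alpha_t^2$, whose minimum contribution to the cost is $\alpha_t^2 c_t^2$. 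Summing the two independently optimal values produces \eqref{Eq:U_SOGD_abs_full}; achievability is realized by combining the exact-OGD optimizer, the choice $\lambda_t^{(3)} = \alpha_t^2$, and the zeroing choice of $\lambda_t^{(4,5,6)}$.

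The main obstacle is not the algebra but the bookkeeping needed to justify the decoupling rigorously: one must confirm that the cost genuinely does not involve $\lambda_t^{(4,5,6)}$, that these multipliers appear only in the chosen off-diagonal positions, and that the reduced $3\times 3$ LMI matches \eqref{eq:LMI_OGD} term by term (including signs, since $Y_t$ in \eqref{Eq:Mu_def} already incorporates the negation relative to \eqref{eq:gradient3}). With these structural observations nailed down, the argument reduces cleanly to invoking the already-established exact OGD result, and the same technique should transparently generalize to other online algorithms driven by zero-mean IID noise.
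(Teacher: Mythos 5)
Your proposal is correct and follows essentially the same route as the paper: the paper's lower bound is exactly the forward direction of your ``iff'' lemma (the $3\times 3$ principal submatrix must satisfy the exact-OGD LMI \eqref{eq:LMI_OGD} and the $(4,4)$ entry forces $\lambda_t^{(3)}\ge \alpha_t^2$), and its upper bound is exactly your reverse direction, namely the feasible point with $\lambda_t^{(4)}=-\alpha_t$, $\lambda_t^{(5)}=\alpha_t^2$, $\lambda_t^{(6)}=-\alpha_t$ that block-diagonalizes the matrix. Packaging the two directions as a single structural equivalence is a slightly cleaner presentation, but the underlying argument is identical.
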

\begin{proof}
Based on Proposition \ref{Prop:Greedy_app}, we know that $\hat{U}_{t+1}$ can be calculated as
\begin{equation}\label{eq:SGD_Opt_stg1a}
\hat{U}_{t+1}= \minimize_{(\rho_t,\lambda_t^{(1)},\hdots,\lambda_t^{(6)})\in\mathcal{T}_t}
 \rho_t^2 \hat{U}_t +\lambda_t^{(2)} \sigma_t^2 +\lambda_t^{(3)} c_t^2.
\end{equation}
For any $(\rho_t,\lambda_t^{(1)},\cdots,\lambda_t^{(6)})\in\mathcal{T}_t$, the $3\times 3$ upper-left block of the left side of \eqref{eq:LM_SGD} must be negative semidefinite, and hence we know
\eqref{eq:LMI_OGD} holds. Let $\hat{\mathcal{T}}_t\subset \R_+^4\times \R^3$ denote the set of tuples $\left( \rho_t, \lambda_t^{(1)} , \cdots , \lambda_t^{(6)} \right) $ that satisfy the LMI condition \eqref{eq:LMI_OGD}. Then we know $\mathcal{T}_t\subset \hat{\mathcal{T}}_t$. Noticing $\lambda_t^{(3)}\ge \alpha_t^2$ for any point in $\mathcal{T}_t$, we can make the following~claim:
 \begin{align*}
\hat{U}_{t+1} &\ge \minimize_{(\rho_t, \lambda_t^{(1)}, \cdots, \lambda_t^{(6)}) \in \mathcal{T}_t} \rho_t^2 \hat{U}_t+\lambda_t^{(2)} \sigma_t^2 +\alpha_t^2 c_t^2\\ &\ge \minimize_{(\rho_t, \lambda_t^{(1)}, \cdots, \lambda_t^{(6)}) \in \hat{\mathcal{T}}_t}  \rho_t^2 \hat{U}_t+\lambda_t^{(2)} \sigma_t^2 +\alpha_t^2 c_t^2\\
&= \minimize_{(\rho_t, \lambda_t^{(1)},\lambda_t^{(2)}) \in \hat{\mathcal{V}}_t}  \rho_t^2 \hat{U}_t+\lambda_t^{(2)} \sigma_t^2 +\alpha_t^2 c_t^2\\
&= \left( \mu_t \sqrt{\hat{U}_t}+\sigma_t \right)^2 + \alpha_t^2 c_t^2,
 \end{align*}
where $\hat{\mathcal{V}}_t\subset \R_+^3$ denotes the set of tuples $\left( \rho_t, \lambda_t^{(1)}, \lambda_t^{(2)} \right) $ that satisfy the LMI condition~\eqref{eq:LMI_OGD}, and the last step can be easily verified using the argument in Section \ref{sec:Inexact_OGD_Abs}. It is also straightforward to verify that 
$
\rho_t^2 =\mu_t^2 (1+\nu_t), \,\,
\lambda_t^{(1)}=\frac{\mu_t \alpha_t(1+\nu_t)}{L_t-m_t}, \,\,
\lambda_t^{(2)}= \alpha_t^2, \,\,
\lambda_t^{(3)}= 1+\frac{1}{\nu_t}, \,\,
\lambda_t^{(4)}= -\alpha_t, \,\,
\lambda_t^{(5)}=  \alpha_t^2, \,\,
\lambda_t^{(6)}=  -\alpha_t
$  is a feasible point in $\mathcal{T}_t$.
For the above feasible point, we have $\rho_t^2 \hat{U}_t+\lambda_t^{(2)} \sigma_t^2 +\alpha_t^2 c_t^2=\left( \mu_t \sqrt{\hat{U}_t}+\sigma_t \right)^2 + \alpha_t^2 c_t^2$. Therefore, we also have $\hat{U}_{t+1}\le \left( \mu_t \sqrt{\hat{U}_t}+\sigma_t \right)^2 + \alpha_t^2 c_t^2$. Combining the upper and lower bounds for $\hat{U}_{t+1}$, we reach the desired conclusion.
\end{proof}
\begin{remark}\label{rem:4}
We can see that the bound in \eqref{Eq:U_SOGD_abs_full} is always smaller than or equal to the bound in \eqref{Eq:U_inexactOGD_abs_full}. Obviously, the stochastic assumption on $\{v_t\}$ allows us to derive a refined bound. 
If we have $L_t=L$, $m_t=m$, $\alpha_t=\alpha$, $\sigma_t=\sigma$ and $c_t=c$ for all $t$, then \eqref{Eq:U_SOGD_abs_full} becomes $\hat{U}_{t+1}= ( \mu \sqrt{\hat{U}_t}+\sigma )^2 + \alpha^2 c^2 $. Then it seems that both \eqref{Eq:U_SOGD_abs_full} and \eqref{Eq:U_inexactOGD_abs_full} lead to the bound
 $\sqrt{\hat{U}_t} \leq \mu^t \sqrt{\hat{U}_0} + \frac{\sigma + \alpha c}{1-\mu} $, which states that $\sqrt{\hat{U}_t}$ converges linearly to an asymptotic value $\frac{\alpha c}{1-\mu} $ at the rate $\mu$.
However, we can use \eqref{Eq:U_SOGD_abs_full} to derive a different bound with an improved asymptotic value. Noticing $0\le \mu \le 1$, the fixed point of \eqref{Eq:U_SOGD_abs_full} is unique and satisfies $U^*=(\mu\sqrt{U^*}+\sigma)^2+\alpha^2 c^2$. We have $\sqrt{U^*}= \frac{ \mu \sigma +\sqrt{\sigma^2 +\alpha^2 c^2 (1- \mu^2 )}}{1-\mu^2}$.
Notice that the right side of \eqref{Eq:U_SOGD_abs_full} is concave in $\hat{U}_t$, and hence we have
$\hat{U}_{t+1}\le U^*+ \left(\mu +\frac{\sigma}{\sqrt{U^*}}\right)(\hat{U}_t-U^*) \leq \left( \mu +\frac{\sigma }{\sqrt{U^*}}\right)^{t+1}\hat{U}_0 +U^*$.  
Since we know $(\mu+\sigma/\sqrt{U^*})^2=1-\alpha^2 c^2 /U^*\le 1$, we have obtained an alternative bound which gives the true value $U^*$ at the price of yielding a slower rate. 
\end{remark}

\subsection{Analysis of stochastic OGD in the finite-sum setting}
For many machine learning tasks, the cost function has a finite-sum structure. For instance, the total cost in supervised learning can be typically decomposed as a sum of the loss on different data points in the training set. If the data points used for training are time-varying, then we have
$f_t=\frac{1}{n}\sum_{i=1}^n f_t^{(i)}$, where $n$ is the size of the training set. Instead of utilizing the full gradient information, one can randomly sample one data point from the training set and use the following stochastic online gradient scheme:
\begin{align}\label{eq:SOGD}
x_{t+1}=x_t-\alpha \nabla f_t^{(i_t)}(x_t),
\end{align}  
where $i_t\in \{1,2,\cdots,n\}$ is an index sampled in an IID manner. In such a setting, we can still apply Proposition \ref{prop:main_stoc} to obtain the following LMI condition.

\begin{lemma}
\label{lem:SGD_sum}
Consider the recursion \eqref{eq:SOGD}.
For all $t$, we assume: i)  $f_t$ is $m_t$-strongly convex, and ii) $\|x_{t+1}^*-x_t^*\|\le \sigma_t$. Denote $G_t=\frac{1}{n}\sum_{i=1}^n \norm{\nabla f_t^{(i)}(x_t^*)}^2$.  Based on the assumption on $f_t^{(i)}$, we define $\tilde{X}_t\in \R^{2\times 2}$ as follows.
\begin{align}
  \label{Eq:X_def}
  \tilde{X}_t :=  \left\{
    \begin{array}{ll}
      \bmat{2L_t^2 & 0\\ 0 & -1} & \mbox{if } f_t^{(i)} \mbox{\,is\,\,\,} L_t\mbox{-smooth}\\
      \bmat{0 & L_t \\ L_t & -1} & \mbox{if } f_t^{(i)} \mbox{\,is\,\,\,} L_t \mbox{-smooth and convex} \\
     \bmat{-2L_t m_t & L_t+m_t\\ L_t+m_t & -1} &  \mbox{if } f_t^{(i)} \mbox{\,is\,\,\,} L_t\in\mathcal{F}(m_t,L_t)
    \end{array}
  \right..
\end{align}
If there exists non-negative scalars $(\rho_t, \lambda_t^{(1)}, \lambda_t^{(2)},\lambda_t^{(3)})$ such that
\begin{align}
\label{Eq:LMI_SGD_fin_sum}
\bmat{1-\rho_t^2 & -\alpha_t & 1 \\ -\alpha_t & \alpha_t^2 & -\alpha_t \\ 1 & -\alpha_t & 1-\lambda_t^{(2)}}+\lambda_t^{(1)} \bmat{-2m_t & 1 & 0  \\ 1 & 0 & 0  \\ 0 & 0 & 0 }+\lambda_t^{(3)}\bmat{\tilde{X}_t & 0_{2\times 1} \\ 0_{1\times 2} & 0}\preceq 0,
\end{align}
then the iterates of \eqref{eq:SOGD} satisfy 
$\mathbb{E}\norm{x_{t+1}-x_{t+1}^*}^2 \le \rho_t^2 \mathbb{E}\norm{x_t-x_t^*}^2+\lambda_t^{(2)} \sigma_t^2+2\lambda_t^{(3)} G_t^2$.
\end{lemma}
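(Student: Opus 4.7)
The plan is to recast the stochastic update \eqref{eq:SOGD} into the state-space template of Proposition \ref{prop:main_stoc} and to construct three expected supply rate conditions whose weighted combination reproduces the LMI \eqref{Eq:LMI_SGD_fin_sum}. Specifically, setting $\xi_t=x_t-x_t^*$, $e_t=x_t^*-x_{t+1}^*$, $g_t=\nabla f_t^{(i_t)}(x_t)$, $w_t=\bmat{g_t^\tp & e_t^\tp}^\tp$, and $B_t=\bmat{-\alpha_t & 1}$, the iteration \eqref{eq:SOGD} becomes $\xi_{t+1}=\xi_t+(B_t\otimes I_p)w_t$, a special case of \eqref{eq:gen_algo}. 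With this $B_t$, the matrix $\bmat{1-\rho_t^2 & B_t\\ B_t^\tp & B_t^\tp B_t}$ appearing on the left of \eqref{eq:LMI1} coincides with the $3\times 3$ block $\bmat{1-\rho_t^2 & -\alpha_t & 1\\ -\alpha_t & \alpha_t^2 & -\alpha_t\\ 1 & -\alpha_t & 1}$, which is exactly the part of \eqref{Eq:LMI_SGD_fin_sum} not accounted for by the three $\lambda_t^{(j)}$-terms.

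Next, I would identify three expected supply rates that match those three $\lambda_t^{(j)}$-terms one by one. The first uses $m_t$-strong convexity of $f_t$ together with the IID unbiasedness $\mathbb{E}[g_t\mid x_t]=\nabla f_t(x_t)$ and $\nabla f_t(x_t^*)=0$ to derive $\mathbb{E}[g_t^\tp\xi_t]\ge m_t\,\mathbb{E}\|\xi_t\|^2$. In supply rate form this is $X_t^{(1)}=\diag(M_t,0)$ with $\Lambda_t^{(1)}=0$, and $M_t$ from \eqref{Eq:Mu_def} produces the $\lambda_t^{(1)}$-block in \eqref{Eq:LMI_SGD_fin_sum}. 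The second supply rate is pathwise and encodes $\|e_t\|^2\le\sigma_t^2$, giving $X_t^{(2)}=\diag(0,0,1)$ with $\Lambda_t^{(2)}=\sigma_t^2$, which accounts for the $(3,3)$ entry $-\lambda_t^{(2)}$. The third, which is the crux of the argument, bounds the conditional second moment $\mathbb{E}[\|g_t\|^2\mid x_t]=\tfrac{1}{n}\sum_i\|\nabla f_t^{(i)}(x_t)\|^2$ and will take the form $X_t^{(3)}=-\diag(\tilde X_t,0)$ with $\Lambda_t^{(3)}=2G_t^2$, producing the $\lambda_t^{(3)}$-block involving $\tilde X_t$.

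Verifying this third supply rate in each of the three regimes of \eqref{Eq:X_def} is the main technical obstacle. The common ingredient is the identity $\tfrac{1}{n}\sum_i\nabla f_t^{(i)}(x_t^*)=\nabla f_t(x_t^*)=0$ combined with the elementary bound $\|a\|^2\le 2\|a-b\|^2+2\|b\|^2$ applied to $a=\nabla f_t^{(i)}(x_t)$ and $b=\nabla f_t^{(i)}(x_t^*)$. In the smooth case, $L_t$-smoothness of each $f_t^{(i)}$ directly yields $\mathbb{E}[\|g_t\|^2\mid x_t]\le 2L_t^2\|\xi_t\|^2+2G_t^2$. In the convex-plus-smooth case, I would instead invoke Baillon--Haddad co-coercivity $\|\nabla f_t^{(i)}(x_t)-\nabla f_t^{(i)}(x_t^*)\|^2\le L_t(\nabla f_t^{(i)}(x_t)-\nabla f_t^{(i)}(x_t^*))^\tp(x_t-x_t^*)$ and average over $i$ (so that the term linear in $\nabla f_t^{(i)}(x_t^*)^\tp\xi_t$ vanishes), obtaining $\mathbb{E}[\|g_t\|^2\mid x_t]\le 2L_t\,\nabla f_t(x_t)^\tp\xi_t+2G_t^2$. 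In the $\mathcal{F}(m_t,L_t)$ case, I would apply the two-point version of \eqref{eq:gradient3} to each $f_t^{(i)}$ at the pair $(x_t,x_t^*)$, average over $i$, and again deploy the elementary bound to reach $\mathbb{E}[\|g_t\|^2\mid x_t]\le 2(L_t+m_t)\nabla f_t(x_t)^\tp\xi_t-2L_tm_t\|\xi_t\|^2+2G_t^2$. Taking total expectations and rewriting each inequality as a quadratic form in $[\xi_t;g_t;e_t]$ yields exactly the three choices of $\tilde X_t$ listed in \eqref{Eq:X_def}.

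Finally, I would invoke Proposition \ref{prop:main_stoc}. Plugging the three pairs $(X_t^{(j)},\Lambda_t^{(j)})$ into \eqref{eq:LMI1} and expanding $B_t^\tp B_t=\bmat{\alpha_t^2 & -\alpha_t\\-\alpha_t & 1}$ reproduces \eqref{Eq:LMI_SGD_fin_sum} verbatim, while $\sum_j\lambda_t^{(j)}\Lambda_t^{(j)}=\lambda_t^{(2)}\sigma_t^2+2\lambda_t^{(3)}G_t^2$ delivers the claimed mean-square tracking error bound. I expect the main difficulty to lie in the third supply rate: each of the three cases in \eqref{Eq:X_def} requires its own interpolation-style inequality for $f_t^{(i)}$ and a careful averaging argument that crucially uses $\tfrac{1}{n}\sum_i\nabla f_t^{(i)}(x_t^*)=0$; once this step is in place, the remaining assembly is essentially bookkeeping.
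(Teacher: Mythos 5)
Your proposal is correct and follows essentially the same route as the paper: the same state-space embedding with $\xi_t=x_t-x_t^*$, $B_t=\bmat{-\alpha_t & 1}$, and the same three expected supply rates $(X_t^{(1)},\Lambda_t^{(1)})=(\diag(M_t,0),0)$, $(X_t^{(2)},\Lambda_t^{(2)})=(\diag(0,0,1),\sigma_t^2)$, and $(X_t^{(3)},\Lambda_t^{(3)})=(-\diag(\tilde X_t,0),2G_t^2)$, assembled via Proposition~\ref{prop:main_stoc}. Your case-by-case derivation of the third supply rate (via $\|a\|^2\le 2\|a-b\|^2+2\|b\|^2$, co-coercivity, and the interpolation inequality, each averaged over $i$ using $\frac{1}{n}\sum_i\nabla f_t^{(i)}(x_t^*)=0$) is exactly the step the paper leaves implicit, and it is carried out correctly.
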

\begin{proof}
We can set $e_t=x_t^*-x_{t+1}^*$ and rewrite \eqref{eq:SOGD} as
$x_{t+1}-x_{t+1}^*=x_t-x_t^*-\alpha \nabla f_t^{(i_t)}(x_t)+e_t$,
which becomes a special case of \eqref{eq:gen_algo} if we set $\xi_t= x_t-x_t^*$, $B_t=\bmat{-\alpha_t & 1 }$, and $w_t=\bmat{\nabla f_t^{(i_t)}(x_t)^\tp  & e_t^\tp}^\tp$.  The $m_t$-strong convexity of $f_t$ leads to a supply rate condition with $(X_t^{(1)},\Lambda_t^{(1)})=(\diag(M_t,0),0)$, where $M_t$ is given by \eqref{Eq:Mu_def}. The assumption $\norm{e_t} \le \sigma_t$ leads to the second supply rate condition with $X_t^{(2)}= \diag(0,0,1)$ and $\Lambda_t^{(2)}=\sigma_t^2$. The assumption on $f_t^{(i)}$ can be translated to the third supply rate condition with $X_t^{(3)}=- \diag(\tilde{X}_t,0)$ and $\Lambda_t^{(3)}=2G_t^2$. Combining these supply rate conditions with Proposition \ref{prop:main_stoc} immediately leads to the desired~LMI.
\end{proof}
Incorporating \eqref{Eq:LMI_SGD_fin_sum} into our sequential SDP framework, we obtain the following result. 
\begin{theorem}\label{th:SGD_fin_sum_param}
Suppose $\alpha_t\in [0, \bar{\alpha}_t]$, where $\bar{\alpha}_t$ is defined as
\begin{align*}
  \bar{\alpha}_t :=  \left\{
    \begin{array}{ll}
      \frac{m_t}{L_t^2} & \mbox{if } f_t^{(i)} \mbox{\,is\,\,\,} L_t\mbox{-smooth}\\
      \frac{1}{L_t} & \mbox{if } f_t^{(i)} \mbox{\,is\,\,\,} L_t \mbox{-smooth and convex} \\
     \frac{1}{L_t+m_t} &  \mbox{if } f_t^{(i)} \mbox{\,is\,\,\,} L_t\mbox{-smooth and $m_t$-strongly convex}
    \end{array}
  \right..
\end{align*}
Let $\mathcal{T}_t \subseteq \R_+^4$ denote the set of tuples $\left( \rho_t, \lambda_t^{(1)} , \lambda_t^{(2)}, \lambda_t^{(3)}\right) $ that satisfy \eqref{Eq:LMI_SGD_fin_sum} . Suppose the assumptions in Lemma~\ref{lem:SGD_sum} hold. Then  $\hat{U}_t$ from \eqref{Eq:Optimal_U} with $J=3$ is given by
\begin{align}
  \label{Eq:U_SOGD_sum_full}
  \hat{U}_{t+1} = 
     \left( \sqrt{\hat{\rho}_t \hat{U}_t +2 \alpha_t^2 G_t^2} +\sigma_t \right)^2,
\end{align}
where $\hat{\rho}_t=1-2 \alpha_t m_t +\tilde{m} \alpha_t^2$ and $\tilde{m}_t:=\tilde{X}_t(1,1)+2 m_t \tilde{X}_t(2,1)$.
\end{theorem}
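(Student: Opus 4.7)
The plan is to apply Proposition~\ref{Prop:Greedy_app} to reduce the claim to solving, at each time step, the single-step SDP
\[
\hat{U}_{t+1} = \min_{(\rho_t,\lambda_t^{(1)},\lambda_t^{(2)},\lambda_t^{(3)})\in\mathcal{T}_t}\Bigl(\rho_t^2\hat{U}_t + \lambda_t^{(2)}\sigma_t^2 + 2\lambda_t^{(3)}G_t^2\Bigr),
\]
and then to solve this SDP exactly by mimicking the Schur-complement/change-of-variable recipe already used in the proofs of Theorems~\ref{th:InexactOGD_abs_param} and \ref{th:InexactOGD_rel_param}. First I would set $\lambda_t^{(2)}=1+1/\nu_t$ for $\nu_t>0$ and apply the Schur complement on the $(3,3)$ entry $-1/\nu_t$ of the LMI \eqref{Eq:LMI_SGD_fin_sum} to obtain an equivalent $2\times 2$ matrix inequality in $(\rho_t,\lambda_t^{(1)},\lambda_t^{(3)})$ parametrized by $\nu_t$.

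The $(2,2)$ entry of that reduced inequality equals $\alpha_t^2(1+\nu_t)-\lambda_t^{(3)}$, which forces the slack $s:=\lambda_t^{(3)}-\alpha_t^2(1+\nu_t)\ge 0$. For fixed $s$ and $\nu_t$ I would minimize $\rho_t^2$ over $\lambda_t^{(1)}$ by completing the square (equivalently, a second Schur complement on the $(2,2)$ entry). The main algebraic obstacle is to show that this completion collapses cleanly into the decomposition
\[
\rho_t^2 \;=\; (1+\nu_t)\,\hat{\rho}_t \;+\; (\tilde{m}_t-m_t^2)\,s,
\]
attained at $\lambda_t^{(1)} = \alpha_t(1+\nu_t)\bigl(1-\alpha_t\tilde{X}_t(1,2)\bigr)+\bigl(m_t-\tilde{X}_t(1,2)\bigr)s$, where by construction $\tilde{m}_t=\tilde{X}_t(1,1)+2m_t\tilde{X}_t(1,2)$ and $\hat{\rho}_t=1-2\alpha_t m_t+\tilde{m}_t\alpha_t^2$.

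Next I would run a short case check against \eqref{Eq:X_def} to verify that $\tilde{m}_t\ge m_t^2$ in each of the three regimes (indeed $\tilde{m}_t$ equals $2L_t^2$, $2m_tL_t$, or $2m_t^2$ respectively). This makes the coefficient of $s$ in the cost
\[
\rho_t^2\hat{U}_t + 2\lambda_t^{(3)}G_t^2 \;=\; (1+\nu_t)\bigl[\hat{\rho}_t\hat{U}_t+2\alpha_t^2 G_t^2\bigr] + s\bigl[(\tilde{m}_t-m_t^2)\hat{U}_t+2G_t^2\bigr]
\]
nonnegative, so the minimum is attained at $s=0$, giving $\lambda_t^{(3)}=\alpha_t^2(1+\nu_t)$ and $\lambda_t^{(1)}=\alpha_t(1+\nu_t)\bigl(1-\alpha_t\tilde{X}_t(1,2)\bigr)$. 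A second case check against \eqref{Eq:X_def} shows that the remaining constraint $\lambda_t^{(1)}\ge 0$ is implied by the assumed stepsize bound $\alpha_t\le\bar{\alpha}_t$ in each of the three regimes.

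With $s=0$ the cost simplifies to $(1+\nu_t)\bigl[\hat{\rho}_t\hat{U}_t+2\alpha_t^2 G_t^2\bigr]+(1+1/\nu_t)\sigma_t^2$, and I would then minimize over $\nu_t>0$ using the standard identity $\min_{\nu>0}\bigl[(1+\nu)A+(1+1/\nu)B\bigr]=\bigl(\sqrt{A}+\sqrt{B}\bigr)^2$ with $A=\hat{\rho}_t\hat{U}_t+2\alpha_t^2 G_t^2$ and $B=\sigma_t^2$. This yields $\hat{U}_{t+1}=\bigl(\sqrt{\hat{\rho}_t\hat{U}_t+2\alpha_t^2 G_t^2}+\sigma_t\bigr)^2$ as claimed. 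Structurally this mirrors the two-sided argument used in Theorem~\ref{th:stoc_OGD_param}: a lower bound from the Schur-complement reduction and a matching upper bound from the explicit feasible point $(\rho_t,\lambda_t^{(1)},\lambda_t^{(2)},\lambda_t^{(3)})$ just identified.
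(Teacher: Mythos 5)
Your proposal is correct and follows essentially the same route as the paper's proof: the reduction via Proposition~\ref{Prop:Greedy_app}, the substitution $\lambda_t^{(2)}=1+1/\nu_t$, the Schur-complement reduction with slack $s=\lambda_t^{(3)}-\alpha_t^2(1+\nu_t)$ (the paper's $\tau_t$), minimization over $\lambda_t^{(1)}$, the conclusion that the optimum sits at $s=0$, and the final $\min_{\nu>0}\bigl[(1+\nu)A+(1+1/\nu)B\bigr]=(\sqrt{A}+\sqrt{B})^2$ step. The only difference is cosmetic: you carry out the $\lambda_t^{(1)}$-minimization uniformly in the entries of $\tilde{X}_t$ and isolate the single condition $\tilde{m}_t\ge m_t^2$, whereas the paper runs the same computation as three separate cases of \eqref{Eq:X_def}.
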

\begin{proof}
Based on Proposition \ref{Prop:Greedy_app}, we need to solve the following problem:
\begin{equation}\label{Eq:SGD_Fsum_Opt_stg1}
\hat{U}_{t+1}= \minimize_{(\rho_t,\lambda_t^{(1)},\lambda_t^{(2)},\lambda_t^{(3)}  )\in\mathcal{T}_t} \rho_t^2 \hat{U}_t + \lambda_t^{(2)} \sigma_t^2 +2\lambda_t^{(3)} G_t^2.
\end{equation}
Set $\lambda_t^{(2)}=1+\frac{1}{\nu_t}$. We can apply the Schur complement to convert \eqref{Eq:LMI_SGD_fin_sum} to the equivalent condition, i.e. $\nu_t >0, \,\, \lambda_t^{(1)}\ge 0,  \,\, \lambda_t^{(3)}\ge 0$ and  :
\[
\bmat{\left(1+\nu_t \right) -\rho_t^2-2 m_t \lambda_t^{(1)} + \lambda_t^{(3)} \tilde{X}_{t}(1,1) & \lambda_t^{(1)} +\lambda_t^{(3)}\tilde{X}_t(1,2)-\alpha_t \left( 1 + \nu_t \right) \\
\lambda_t^{(1)} +\lambda_t^{(3)}\tilde{X}(1,2)-\alpha_t \left( 1 + \nu_t \right)  & \alpha_t^2 \left(1+\nu_t \right)+\lambda_t^{(3)}\tilde{X}(2,2)   } \preceq 0.
\]
Noticing $\tilde{X}_t(2,2)=-1$ from (\ref{Eq:X_def}), we set $\tau_t=\lambda_t^{(3)} -\alpha_t^2 \left(1+\nu_t\right)$, and this leads to $\lambda_t^{(3)}=\tau_t +\alpha_t^2 \left(1+\nu_t\right)$. Based on the Schur complement lemma, we can set $\phi_t:=\left(1+\nu_t\right) \left(1 -\alpha_t \tilde{X}_t(2,1)\right)$ and show that the above condition is equivalent to $\nu_t >0$, $\tau_t \geq 0$, $\lambda_t^{(1)}\ge 0$, and $h(\nu_t,\tau_t,\lambda_t^{(1)}) \leq \rho_t^2$, where $h(\nu_t,\tau_t,\lambda_t^{(1)})$ is defined as
\begin{align*}
h(\nu_t,\tau_t,\lambda_t^{(1)})= \left(1+\nu_t \right) & \left(1+\alpha_t^2 \tilde{X}_t(1,1)\right) +2\lambda_t^{(1)}\left( \tilde{X}_t(2,1)-m_t\right) \\+ & \tau_t\left(\tilde{X}_t(1,1)+(\tilde{X}_t(2,1))^2 \right)+ 
 \frac{ (  \lambda_t^{(1)} -\alpha_t \phi_t )^2}{\tau_t} -2\alpha_t \phi_t \tilde{X}_t(2,1).
\end{align*}
Next, we want to show $h(\nu_t,\tau_t,\lambda_t^{(1)}) \ge 0$. We need to consider three different cases corresponding to various assumptions on $f_t^{(i)}$ used in \eqref{Eq:X_def}. 

In the first case, we assume $f_t^{(i)}$ is only $L_t$-smooth. We have $\tilde{X}_t(1,1)=2L_t^2$ and $\tilde{X}_t(2,1)=0$
For any fixed $\nu_t>0$ and $\tau_t \ge 0$, we have
\[
h(\nu_t,\tau_t,\lambda_t^{(1)})= \left(1+\nu_t \right)  \left(1+\alpha_t^2 \tilde{X}_t(1,1)\right) -2\lambda_t^{(1)}m_t +2L_t^2 \tau_t+\frac{ (  \lambda_t^{(1)} -\alpha_t(1+\nu_t))^2}{\tau_t}, 
\]
%$\lambda_t^{(1)}=\alpha_t(1+\nu_t)+m_t\tau_t$, 
which is non-negative based on the following fact:
\begin{align*}
h(\nu_t, \tau_t, \lambda_t^{(1)}) \ge h(\nu_t, \tau_t, \alpha_t \phi_t +\tau_t m_t) 
= \left(1+\nu_t \right)  \left(1-2m_t\alpha_t+2L_t^2 \alpha_t^2\right) + \tau_t(2L_t^2 -m_t^2).
\end{align*}
 In the second case, we assume $f_t^{(i)}$ is $L_t$-smooth and convex. We have $\tilde{X}_t(1,1)=0$ and $\tilde{X}_t(2,1)=L_t$.
For any fixed $\nu_t>0$ and $\tau_t \ge 0$, we have:
 \[
 h(\nu_t,\tau_t,\lambda_t^{(1)})= 1+\nu_t    +2\lambda_t^{(1)}\left( L_t-m_t\right) +\tau_t L_t^2+\frac{ (  \lambda_t^{(1)} -\alpha_t \phi_t )^2}{\tau_t} -2\alpha_t \phi_t L_t.
   \]
We can prove that $h(\nu_t,\tau_t,\lambda_t^{(1)})$ has a non-negative lower bound given below by $\hat{h}(\nu_t, \tau_t)$ for all $\lambda_t^{(1)}$:
\begin{align*}
    \hat{h}(\nu_t, \tau_t):= \begin{cases}
\left(1+\nu_t \right)\left( 1+2\alpha_t m _t \left( \alpha_t L_t - 1 \right) \right) + \tau_t m_t(2 L_t - m_t)  
 & \text{if} \,\ 0 \le \tau_t \le \frac{\alpha_t\phi_t}{L_t-m_t} \\  
1+\nu_t +\tau_t L_t^2+\frac{ \alpha_t^2 \phi_t^2}{\tau_t} -2\alpha_t \phi_t L_t  & \text{if} \,\  \tau_t > \frac{\alpha_t\phi_t}{L_t-m_t} 
\end{cases}
\end{align*}
Since we know $\frac{\alpha_t \phi_t}{L_t} \le \frac{\alpha_t \phi_t}{L_t-m_t}$ for any fixed $\nu_t$, we can easily conclude  that $\hat{h}(\nu_t,\tau_t)$ linearly increases for $0 \le \tau_t \le \frac{\alpha_t\phi_t}{L_t-m_t}$ and then again monotonically increases for $\tau_t > \frac{\alpha_t\phi_t}{L_t-m_t} $. Therefore $\hat{h}$ achieves its minimum at $\tau_t=0$. Therefore, we must have $h(\nu_t, \tau_t, \lambda_t^{(1)}) \ge h(\nu_t, \tau_t, \alpha_t(1+\nu_t)(1-\alpha_t L_t)-\tau_t(L_t-m_t))= \hat{h}(\nu_t, \tau_t)\ge 0$.

 In the third case, $f_t^{(i)}$ is assumed to be $L_t$-smooth and $m_t$-strongly convex. We have $\tilde{X}_t(1,1)=-2 L_t m_t$ and $\tilde{X}_t(2,1)=L_t+m_t$.
For any $\nu_t>0$ and $\tau_t \ge 0$, we~have
 \begin{align*}
       h(\nu_t,\tau_t,\lambda_t^{(1)})= \left(1+\nu_t \right)  \left(1-2\alpha_t^2 L_t m_t\right) +  & 2\lambda_t^{(1)}L_t +\tau_t\left(L_t^2+m_t^2\right) \\
       & +\frac{(\lambda_t^{(1)} -\alpha_t \phi_t )^2}{\tau_t} -2\alpha_t \phi_t (L_t+m_t).
 \end{align*}
Again, we can prove that $h(\nu_t,\tau_t,\lambda_t^{(1)})$ has a non-negative lower bound $\hat{h}(\nu_t, \tau_t)$ for all $\lambda_t^{(1)}$. Specifically, we set $\hat{h}(\nu_t,\tau_t)=\left(1+\nu_t \right)\left( 1-2\alpha_t m_t+2\alpha_t^2 m_t^2 \right) +\tau_t m_t^2 $
for $ 0 \le \tau_t \le \frac{\alpha_t\phi_t}{L_t}$, and set $\hat{h}(\nu_t,\tau_t)=\left(1+\nu_t \right) \left(1- \right. \left. 2\alpha_t^2 L_t m_t\right)+\tau_t\left(L_t^2+m_t^2\right)  +\frac{ \alpha_t^2 \phi_t^2}{\tau_t} -2\alpha_t \phi_t (L_t+m_t)$
for $\tau_t>\frac{\alpha_t \phi_t}{L_t}$. Since we have $\frac{\alpha_t \phi_t}{L_t^2+m_t^2} \le \frac{\alpha_t \phi_t}{L_t}$ for any fixed $\nu_t$, we can conclude that $\hat{h}(\nu_t,\tau_t)$ linearly increases for $0 \le \tau_t \le \frac{\alpha_t\phi_t}{L_t}$ and then again monotonically increases for $\tau_t > \frac{\alpha_t\phi_t}{L_t} $. Therefore $\hat{h}$ achieves its minimum at $\tau_t=0$. This lead us to the fact that $h(\nu_t, \tau_t, \lambda_t^{(1)}) \ge h(\nu_t, \tau_t, \alpha_t(1+\nu_t)(1-\alpha_t (L_t+m_t)-\tau_t L_t)= \hat{h}(\nu_t, \tau_t)\ge 0$.

Now we know $h(\nu_t,\tau_t,\lambda_t^{(1)})\ge 0$. Denote $F(\nu_t,\tau_t):= \left( 1+\frac{1}{\nu_t}\right) \sigma_t^2  +2\left(\tau_t +\alpha_t^2\left( 1+\nu_t \right) \right) G_t^2 $, and the optimization problem \eqref{Eq:SGD_Fsum_Opt_stg1} can be solved as follows
\begin{align*}
    \hat{U}_{t+1}&=\min_{\nu_t>0}\,\,\left( \min_{\tau_t \ge 0} \,\,\left( \min_{\lambda_t^{(1)}\ge 0} \,\,\left( \min_{\rho^2_t\ge h\left(\nu_t,\tau_t,\lambda_t^{(1)} \right) }  \left(\rho_t^2 \hat{U}_t + F(\nu_t,\tau_t)\right) \right)\right) \right)\\
    &=\min_{\nu_t>0}\,\,\left( \min_{\tau_t \ge 0} \,\,\left( \min_{\lambda_t^{(1)}\ge 0} \, \left( h(\nu_t,\tau_t,\lambda_t^{(1)})\hat{U}_t + F(\nu_t,\tau_t) \right) \right)\right)\\
     &=\left( \sqrt{\hat{\rho}_t  \hat{U}_t +2 \alpha_t^2 G_t^2} +\sigma_t \right)^2.
\end{align*}
 Notice the optimal $\lambda_t^{(1)}$ is given by $\alpha_t (1+\nu_t)(1-\alpha_t \tilde{X}_t(2,1))$. Since $\lambda_t^{(1)} \ge 0$ and $\nu_t >0$, therefore the derived solution of the above optimization holds if $0 \le \alpha_t \tilde{X}_t(2,1) \le 1$ which is consistent with the existing result in \cite{hu2021analysis}. Additionally, notice that $\rho_t \le 1$ and $0 \le \alpha_t \tilde{X}_t(2,1) \le 1$ lead to the desired range of $\alpha_t$ for which this result holds. 
\end{proof}

If we assume $L_t=L$, $m_t=m$, $G_t=G$, $\sigma_t=\sigma$, and $\alpha_t=\alpha$ for all $t$, we can simplify \eqref{Eq:U_SOGD_sum_full} as $\sqrt{\hat{U}_{t+1}}= \sqrt{ \hat{\rho}\hat{U}_t +2 \alpha^2 G^2} +\sigma $. This leads to the bound $\sqrt{\hat{U}_t} \leq \hat{\rho}^\frac{t}{2} \sqrt{\hat{U}_0} +\frac{\sqrt{2} \alpha G +\sigma}{1-\sqrt{\hat{\rho}}}$, which shows that $\sqrt{\hat{U}_t}$ converges below a steady state value $\frac{\sqrt{2} \alpha G +\sigma}{1-\sqrt{\hat{\rho}}}$ at a linear rate $\hat{\rho}^\frac{1}{2}$.
We can also use the argument in Remark \ref{rem:4} to get an alternative bound incoporated with the true steady-state value limit $U^*$ satisfying $\sqrt{U^*}=\sqrt{\hat{\rho} U^*+2\alpha^2 G^2}+\sigma$. 
The details are omitted.

\section{New Bounds for Inexact Proximal Online Gradient Descent}
\label{sec:Inexact_IPOGD}
The inexact proximal online gradient descent (IP-OGD) method is proposed in \cite{dixit2019online} and has shown great promise in addressing nonsmooth time-varying optimization problems such as robust subspace tracking. In this setting, the cost at time $t$ is given by a sum $f_t+g_t$, where $f_t$ is a smooth convex function and $g_t$ is non-smooth and convex. 
For each $t$, the IP-OGD method (or equivalently Algorithm 1 in \cite{dixit2019online}) iterates as follows
\begin{align}\label{Eq:InExact_Proximal_Grad_main}
x_{t+1}=\argmin_{x\in \R^p} \left\{\frac{1}{2\alpha}\norm{x-x_t+\alpha \left(\nabla f_t(x_t) +v_t\right)}^2+g_t(x)\right\},
\end{align}
where the term $v_t$ captures the inexactness in the gradient computation and is assumed to satisfy some prescribed norm bound. 
The assumption adopted in \cite{dixit2019online} is $\mathbb{E}\norm{v_t}\le c_t$. 
Consequently, the tracking error bounds derived in \cite{dixit2019online} hold in $L_1$ sense, i.e. upper bounds are obtained for $\mathbb{E}\norm{x_t-x_t^*}$. 
In this paper, we will adopt a more standard mean-squared error bound for $v_t$, i.e. we assume $\mathbb{E}\norm{v_t}^2\le c_t^2$. We are interested in deriving upper bounds for the mean-square tracking error $\mathbb{E}\norm{x_t-x_t^*}^2$.
Our tracking error bounds hold in the $L_2$ sense, and hence naturally complements the $L_1$ error bounds in \cite{dixit2019online}.
Other than this one key difference in the assumption for $v_t$, all other assumptions in our analysis will be the same as the ones used in \cite{dixit2019online}.
We can obtain the following LMI condition.

 \begin{lemma}
\label{lem:inexact_prox_grad}
For all $t$, we assume: i)  $f_t\in \mathcal{F}(m_t,L_t)$, ii) $g_t$ is convex and $L_g$-Lipschitz, iii)
$\|x_{t+1}^*-x_t^*\|\le \sigma_t$, and iv) $\mathbb{E}\norm{v_t}^2\le c_t^2$. If there exist non-negative scalars $\rho_t$ and $\{\lambda_t^{(j)}\}_{j=1}^5$  such that the following matrix inequality holds
\begin{align*}
M(Q_t,S_t,R_t)=\bmat{Q_t & S_t \\ S_t^\tp & R_t}\preceq 0,
\end{align*}
where $(Q_t, S_t, R_t)$ are defined as
\begin{align*}
    Q_t&=\bmat{1-\rho_t^2-2\lambda_t^{(1)}L_t m_t & \lambda_t^{(1)}(L_t + m_t)-\alpha_t \\ \lambda_t^{(1)}(L_t + m_t)-\alpha_t & \alpha_t^2 -2 \lambda_t^{(1)}},\\
    S_t&=\bmat{\lambda_t^{(5)}-\alpha_t & -\alpha_t & -\alpha_t & 1\\ \alpha_t^2-\alpha_t\lambda_t^{(5)}& \alpha_t^2 & \alpha_t^2 & -\alpha_t},\\
    R_t&=\bmat{ \alpha_t^2 -2\alpha_t \lambda_t^{(5)} & \alpha_t^2 -\alpha_t \lambda_t^{(5)} & \alpha_t^2 -\alpha_t \lambda_t^{(5)} & \lambda_t^{(5)} -\alpha_t \\ 
  \alpha_t^2 -\alpha_t \lambda_t^{(5)} & \alpha_t^2-\lambda_t^{(4)} & \alpha_t^2 & -\alpha_t \\ 
  \alpha_t^2-\alpha_t \lambda_t^{(5)}  & \alpha_t^2 & \alpha_t^2-\lambda_t^{(3)} & -\alpha_t \\
  \lambda_t^{(5)}-\alpha_t & -\alpha_t & -\alpha_t &  1-\lambda_t^{(2)}},
\end{align*}
then the IP-OGD method satisfies the following mean-squared tracking error bound:
\[
\mathbb{E}\norm{x_{t+1}-x_{t+1}^*}^2 \le \rho_t^{2} \mathbb{E}\norm{x_t-x_t^*}^2+\lambda_t^{(2)} \sigma_t^2+\lambda_t^{(3)} c_t^2 +4 \lambda_t^{(4)} L_g^2.
\]
\end{lemma}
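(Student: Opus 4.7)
The plan is to instantiate Proposition \ref{prop:main_stoc} on the IP-OGD recursion by casting it in the state-space form \eqref{eq:gen_algo} and identifying the supply-rate conditions. The first step is algebraic: the optimality of the prox step yields a subgradient $s_t \in \partial g_t(x_{t+1})$ with $x_{t+1}=x_t-\alpha_t(\nabla f_t(x_t)+v_t+s_t)$, while the optimality of $x_t^*$ for $f_t+g_t$ yields $s_t^* \in \partial g_t(x_t^*)$ with $\nabla f_t(x_t^*)+s_t^*=0$. Subtracting these relations and setting $e_t := x_t^*-x_{t+1}^*$ produces
\[
\xi_{t+1} = \xi_t - \alpha_t\bigl(\nabla f_t(x_t)-\nabla f_t(x_t^*)\bigr) - \alpha_t(s_t-s_t^*) - \alpha_t v_t + e_t,
\]
where $\xi_t := x_t-x_t^*$. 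I would cast this as \eqref{eq:gen_algo} with a five-block input $w_t$ consisting of (i) the gradient difference, (ii) an auxiliary subgradient-at-$x_{t+1}$ placeholder used only for the monotonicity supply rate, (iii) the subgradient difference $s_t-s_t^*$, (iv) the noise $v_t$, and (v) the drift $e_t$; correspondingly $B_t=[-\alpha_t,\,-\alpha_t,\,-\alpha_t,\,-\alpha_t,\,1]$.

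The second step is to enumerate five supply-rate pairs $(X_t^{(j)},\Lambda_t^{(j)})$. The $\mathcal{F}(m_t,L_t)$ assumption on $f_t$ contributes the usual $Y_t$-quadratic form on the $(\xi_t,w_t^{(1)})$ block, giving $\Lambda_t^{(1)}=0$. The step bound $\|e_t\|\le\sigma_t$ gives $\Lambda_t^{(2)}=\sigma_t^2$, the noise assumption gives the expected supply rate $\mathbb{E}\|v_t\|^2\le c_t^2$ with $\Lambda_t^{(3)}=c_t^2$, and $L_g$-Lipschitz continuity of $g_t$ yields $\|s\|\le L_g$ for every $s\in\partial g_t$, hence by the triangle inequality $\|s_t-s_t^*\|^2\le 4L_g^2$, i.e.\ $\Lambda_t^{(4)}=4L_g^2$. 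The fifth supply rate comes from monotonicity of $\partial g_t$: for any $\tilde{s}_t\in\partial g_t(x_{t+1})$ one has $(\tilde{s}_t-s_t^*)^T(x_{t+1}-x_t^*)\ge 0$; substituting the state equation turns this into a quadratic form in $[\xi_t;w_t]$ with $\Lambda_t^{(5)}=0$, which determines the $X_t^{(5)}$ entries that carry the $\lambda_t^{(5)}$ factor in the final LMI.

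The third step is mechanical. Plugging $B_t$ and the five $X_t^{(j)}$ into \eqref{eq:LMI1}, the $2\times 2$ upper-left block in $(\xi_t,w_t^{(1)})$ reproduces $Q_t$, the $2\times 4$ off-diagonal block reproduces $S_t$, and the $4\times 4$ lower-right block reproduces $R_t$, giving exactly $M(Q_t,S_t,R_t)\preceq 0$. Proposition \ref{prop:main_stoc} then delivers the conclusion, since $\sum_j \lambda_t^{(j)}\Lambda_t^{(j)}$ collapses to $\lambda_t^{(2)}\sigma_t^2+\lambda_t^{(3)}c_t^2+4\lambda_t^{(4)}L_g^2$ because $\Lambda_t^{(1)}=\Lambda_t^{(5)}=0$.

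The hard part will be correctly encoding the fifth supply rate. Because the monotonicity inequality involves $x_{t+1}$ rather than $\xi_t$ and $w_t$ individually, expanding $(\tilde{s}_t-s_t^*)^T(x_{t+1}-x_t^*)$ through the state equation produces a quadratic form with cross-terms between the auxiliary-subgradient block and \emph{every} other component of $w_t$; in particular the diagonal coefficient at the auxiliary block must be $2\alpha_t$ (not $\alpha_t$) due to the symmetric double-counting of off-diagonal entries in the quadratic form, and the cross-term with $w_t^{(5)}=e_t$ acquires the constant $-1$ that leaves the residual entry $\lambda_t^{(5)}-\alpha_t$ in $R_t(1,4)$ and $S_t(1,1)$. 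Reconciling these $\lambda_t^{(5)}$-scaled entries with the $B_t^T B_t$ contributions is the main bookkeeping. A mild additional subtlety is that the monotonicity and Lipschitz supply rates hold almost surely, so they are preserved under the expectation taken inside Proposition \ref{prop:main_stoc} without requiring any zero-mean argument on $v_t$.
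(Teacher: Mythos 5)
Your overall strategy --- casting IP-OGD into the form \eqref{eq:gen_algo}, listing five supply rates, and invoking Proposition \ref{prop:main_stoc} --- is the same as the paper's, but your decomposition of the input $w_t$ has a genuine gap that prevents it from producing the stated LMI. Your recursion contains a single subgradient term $-\alpha_t(s_t-s_t^*)$ with $s_t\in\partial g_t(x_{t+1})$ and $s_t^*\in\partial g_t(x_t^*)$, yet you declare five blocks with $B_t=[-\alpha_t,-\alpha_t,-\alpha_t,-\alpha_t,1]$, where block (ii) is an ``auxiliary placeholder'' and block (iii) is all of $s_t-s_t^*$. These are inconsistent: for $\xi_{t+1}=\xi_t+(B_t\otimes I_p)w_t$ to reproduce your recursion, blocks (ii) and (iii) must \emph{sum} to $s_t-s_t^*$, so a nonzero placeholder double-counts the subgradient and a zero placeholder makes the $\lambda_t^{(5)}$ supply rate vacuous. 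The missing idea is the paper's three-way split through the intermediate point $x_{t+1}^*$: introduce $r_{t+1}^*\in\partial g_t(x_{t+1}^*)$ and write $s_t-s_t^*=(r_t-r_{t+1}^*)+(r_{t+1}^*-r_t^*)$, so that block (ii) is $r_t-r_{t+1}^*$ (controlled by monotonicity, $\Lambda_t^{(5)}=0$) and block (iii) is $r_{t+1}^*-r_t^*$ (controlled by $\|r_{t+1}^*-r_t^*\|\le 2L_g$, $\Lambda_t^{(4)}=4L_g^2$), with both genuinely entering the dynamics with coefficient $-\alpha_t$.

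A second, related error is the anchoring of the monotonicity inequality. You use $(\tilde{s}_t-s_t^*)^T(x_{t+1}-x_t^*)\ge 0$, pairing subgradients at $x_{t+1}$ and $x_t^*$; the lemma's $X_t^{(5)}$ encodes $(r_t-r_{t+1}^*)^T(x_{t+1}-x_{t+1}^*)\ge 0$, pairing $x_{t+1}$ with $x_{t+1}^*$. Since $x_{t+1}-x_t^*=\xi_{t+1}-e_t$, your version cancels the cross term between the subgradient block and $e_t$ (the coefficient becomes $-1+1=0$ instead of $-1$), so after multiplying by $\lambda_t^{(5)}$ you would obtain $-\alpha_t$ rather than $\lambda_t^{(5)}-\alpha_t$ in the $R_t(1,4)$ position. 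You correctly anticipate that this $-1$ cross term is needed to match $R_t$, but the inequality you wrote down does not supply it; only monotonicity anchored at $x_{t+1}^*$ does. Your identification of the other four supply rates, the $2\alpha_t$ diagonal entry, and the observation that the almost-sure supply rates survive the expectation are all correct.
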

\begin{proof}
The IP-OGD recursion
can be rewritten as $x_{t+1}=x_t-\alpha \left(\nabla f_t(x_t) +v_t\right)-\alpha r_t$, where 
$r_t\in \partial g_t(x_{t+1})$. Denote $e_t=x_t^*-x_{t+1}^*$, and \eqref{Eq:InExact_Proximal_Grad_main} can be rewritten as
$x_{t+1}-x_{t+1}^*=x_t-x_t^*-\alpha (\nabla f_t(x_t)-\nabla f_t(x_t^*))-\alpha (r_t-r_{t+1}^*)-\alpha (r_{t+1}^*-r_t^*)-\alpha v_t+e_t$,
where $r_t^*$ is a subgradient of $g_t$ evaluated at $x_t^*$ and satisfying $r_t^*=-\nabla f_t(x_t^*)$, and $r_{t+1}^*$ is any subgradient of $g_t$ evaluated at $x_{t+1}^*$. 
Set $\xi_t:=x_t-x_t^*$. Then the IP-OGD method is a special case of \eqref{eq:gen_algo} with $B_t$ and $w_t$ given as 
$B_t=\bmat{-\alpha_t & -\alpha_t & -\alpha_t & -\alpha_t   &  1}$,
and $w_t =\bmat{\nabla f_t(x_t)^\tp-\nabla f_t(x_t^*)^\tp & r_t^\tp-r_{t+1}^{*\tp} & r_{t+1}^{*\tp}-r_t^{*\tp} & v_t^\tp  & e_t^\tp   }^\tp$.
Next, we provide five different expected supply rate conditions to capture the assumptions in the above lemma statement. 
We need to specify
$(X_t^{(j)}, \Lambda_t^{(j)})$ for $j=1,2,\cdots,5$. The assumption $f_t \in \mathcal{F}(m_t,L_t)$ leads to an expected supply rate condition in the form of ~\eqref{eq:exp_supply} with $(X_t^{(1)}, \Lambda_t^{(1)})=\left( \diag(Y_t ,0,0,0,0) ,0 \right)$, where $Y_t$ is given by \eqref{Eq:Mu_def}.
The condition 
$\|e_t\|\le \sigma_t$ can be rewritten in the quadratic form \eqref{eq:exp_supply} with $ (X_t^{(2)}, \Lambda_t^{(2)})=\left( \diag(0,0,0,0,0,1), \sigma_t^2 \right)$.
It is also straightforward to rewrite $\mathbb{E}\norm{v_t}^2\le c_t^2$ in the form of \eqref{eq:exp_supply} with $ (X_t^{(3)}, \Lambda_t^{(3)})=\left( \diag(0,0,0,0,1,0),c_t^2\right)$.
Since $g_t$ is $L_g$-Lipschitz for all $t$, we have
 $\norm{r_t^*-r_{t+1}^*}\le 2L_g$ which can be recast in the form of \eqref{eq:exp_supply} with $(X_t^{(4)}, \Lambda_t^{(4)})=\left( \diag(0,0,0,1,0,0),  4L_g^2 \right)$. The last supply rate condition is a direct consequence of the convexity of $g_t$, and the condition $(r_t-r_{t+1}^*)^\tp (x_{t+1}-x_{t+1}^*)\ge 0$ can be used to derive a supply rate condition with the following parameters: 
\[
X_t^{(5)}= \bmat{0 & 0 & -1 & 0 & 0 & 0 \\ 0 & 0 & \alpha_t & 0 & 0 & 0 \\ -1 & \alpha_t & 2\alpha_t & \alpha_t & \alpha_t & -1 \\ 0 & 0 &\alpha_t & 0 & 0 & 0\\ 0 & 0 & \alpha_t & 0 & 0 & 0\\ 0 & 0 & -1 & 0 & 0 & 0 } ,  \,\,\mbox{and}\,\,\Lambda_t^{(5)}=0
\]
Then we can apply Proposition \ref{prop:main_stoc} to formulate the desired LMI.
\end{proof}
 Applying our sequential SDP approach, we can obtain the following mean-square tracking error bound for IP-OGD. 

\begin{theorem}\label{th:inexact_prox_grad_param}
Suppose $\alpha_t\le \frac{2}{L_t}$ for all $t$. Let $\mathcal{T}_t \subseteq \R_+^6$ denote the set of tuples $\left( \rho_t, \lambda_t^{(1)},  \hdots , \lambda_t^{(5)} \right) $ that satisfy $M(Q_t,S_t,R_t) \preceq 0$. Suppose the assumptions in Lemma~\ref{lem:inexact_prox_grad} hold, then the bound $\hat{U}_t$ defined in \eqref{Eq:Optimal_U} can be calculated as
\begin{align}
  \label{Eq:U_inexact_proxGrad}
  \hat{U}_{t+1} = 
    \left( \mu_t \sqrt{\hat{U}_t}+ \alpha_t \sigma_t + c_t + 2 \alpha_t L_g\right)^2.  
\end{align}
\end{theorem}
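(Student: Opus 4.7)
The plan is to follow the sequential-SDP template already used in the proofs of Theorems \ref{th:InexactOGD_abs_param}, \ref{th:InexactOGD_rel_param}, \ref{th:optimal_param_VI}, and \ref{th:stoc_OGD_param}. I would first invoke Proposition \ref{Prop:Greedy_app} to rewrite the recursion as the scalar optimization
\[
\hat{U}_{t+1}=\min_{(\rho_t,\lambda_t^{(1)},\ldots,\lambda_t^{(5)})\in\mathcal{T}_t}\!\left(\rho_t^2\hat{U}_t+\lambda_t^{(2)}\sigma_t^2+\lambda_t^{(3)}c_t^2+4\lambda_t^{(4)}L_g^2\right).
\]
Before touching the $6\times 6$ inequality $M(Q_t,S_t,R_t)\preceq 0$ head-on, I would try to eliminate the multiplier $\lambda_t^{(5)}$ tied to the convexity/subgradient supply rate on $g_t$, since it is the only multiplier that populates several rows and columns of $M$ off-diagonally. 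Inspecting the zero-pattern of $X_t^{(5)}$, a natural candidate is to choose $\lambda_t^{(5)}$ so that the rows and columns associated with the subgradient difference $r_t-r_{t+1}^*$ decouple; after this cancellation the remaining LMI is a block-diagonal augmentation of the absolute-error LMI in Lemma \ref{lem:InexactOGD_adv_abs} with two additional rows/columns for the error sources $v_t$ and $r_{t+1}^*-r_t^*$.

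Once the LMI is reduced to this cleaner form, I would apply the same change of variables as in the proof of Theorem \ref{th:InexactOGD_abs_param}: parametrize $\lambda_t^{(2)}=1+1/\nu_t$, and introduce further slacks so that $\lambda_t^{(3)}$ and $\lambda_t^{(4)}$ take the form $\alpha_t^2(1+\nu_t)(1+1/\zeta_t)$ and $\alpha_t^2(1+\nu_t)(1+\zeta_t)(1+1/\eta_t)$ respectively. Setting $\tau_t:=\tfrac{2\lambda_t^{(1)}}{(1+\nu_t)(1+\zeta_t)(1+\eta_t)}-\alpha_t^2$ and applying the Schur complement once more reduces feasibility to a scalar condition of the form $(1+\nu_t)(1+\zeta_t)(1+\eta_t)\,h(\tau_t)\le\rho_t^2$, where $h(\tau_t)$ has exactly the shape of \eqref{eq:h_tau}. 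Minimizing over $\tau_t\ge 0$ returns $\min_{\tau_t\ge 0}h(\tau_t)=\mu_t^2$ whenever $0\le\alpha_t\le 2/L_t$, just as in the proof of Theorem \ref{th:InexactOGD_abs_param}. Unwinding the nested minimizations over $\eta_t$, $\zeta_t$, and $\nu_t$ in that order, and invoking the identity $\min_{\nu>0}\bigl((1+\nu)A^2+(1+1/\nu)B^2\bigr)=(A+B)^2$ at each layer, should collapse the four additive terms into the claimed perfect square.

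The principal obstacle is the first step. Unlike the $3\times 3$ and $4\times 4$ matrices handled in Sections \ref{sec:Inexact_OGD} and \ref{sec:Inexact_SGD}, here the block matrix $M(Q_t,S_t,R_t)$ couples $\lambda_t^{(5)}$ into many entries simultaneously through the convexity supply rate, so a blind Schur-complement reduction produces unwieldy expressions that obscure the underlying structure. The cleanest route is to exploit the zero pattern of $X_t^{(5)}$ to guess the optimal $\lambda_t^{(5)}$, reduce to a smaller LMI, solve that analytically, and then verify a posteriori that the resulting feasible point achieves the lower bound, exactly in the spirit of the feasible-point-plus-optimality-check argument behind Remark \ref{remark1}. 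If the guess is not tight, one falls back on lower-bounding $\hat{U}_{t+1}$ by the minimum of the relaxed problem obtained by dropping the $(r_t-r_{t+1}^*)$ rows/columns (which cannot decrease the feasible set), and separately constructing an explicit feasible point in $\mathcal{T}_t$ whose objective value matches this lower bound, mimicking the sandwich argument used in the proof of Theorem \ref{th:stoc_OGD_param}.
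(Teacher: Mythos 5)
Your plan follows the paper's proof essentially step for step: Proposition \ref{Prop:Greedy_app}, a reparametrization of the multipliers by slack variables, nested Schur complements reducing feasibility to a scalar condition $\phi_t h(\tau_t)\le \rho_t^2$ with the same $h$ as in \eqref{eq:h_tau} (so $\min_{\tau_t\ge 0}h(\tau_t)=\mu_t^2$), and a layer-by-layer unwinding via $\min_{\nu>0}\bigl((1+\nu)A^2+(1+1/\nu)B^2\bigr)=(A+B)^2$. The only divergence is tactical: where you propose to guess the decoupling value of $\lambda_t^{(5)}$ (namely $\lambda_t^{(5)}=\alpha_t$, which zeroes out the off-diagonal entries of the $r_t-r_{t+1}^*$ row and column) and certify tightness with a sandwich argument, the paper instead parametrizes $\lambda_t^{(5)}$ by a slack $\chi_t$ and carries the resulting factor $\psi_t(\chi_t)^{-1}=1+\frac{(\chi_t-1)^2}{4\chi_t}\ge 1$ through the nested minimization, whose optimum at $\chi_t=1$ is exactly your guess.
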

\begin{proof}
We need to solve the following SDP:
\begin{equation}\label{eq:InEx_ProxGrad_stg1}
\hat{U}_{t+1}= \minimize_{(\rho_t, \lambda_t^{(1)},  \hdots , \lambda_t^{(5)}) \in \mathcal{T}_t } 
\rho_t^{2} \hat{U}_t +\lambda_t^{(2)} \sigma_t^2+\lambda_t^{(3)} c_t^2 +4 \lambda_t^{(4)} L_g^2.
\end{equation}
We can modify the previous arguments and apply the Schur complement lemma to convert the above optimization problem into the following equivalent form:
\begin{align}\label{eq:IPOGD_multistg}
    \hat{U}_{t+1}= \min_{\chi_t > 0}\left(\min_{\nu_t>0}\left(\min_{\zeta_t>0}\left(\min_{\upsilon_t>0}\,\,\left( \min_{\tau_t\ge 0}\left(\min_{\phi_t h(\tau_t) \le \rho_t^2}\,\, \left( \rho_t^{2} \hat{U}_t +  F(\chi_t,\nu_t,\zeta_t,\upsilon_t)\right) \right)\right)\right) \right) \right)
\end{align}
where $F(\chi_t,\nu_t,\zeta_t,\upsilon_t)$ is defined as
\[F(\chi_t,\nu_t,\zeta_t,\upsilon_t):= \psi_t(\chi_t)^{-1}\left( 1+\nu_t\right)\left(  \frac{c_t^2}{\nu_t} + \alpha_t^2\left( 1+\zeta_t\right)\left( \frac{\sigma_t^2}{\zeta_t } +4 \left( 1+\frac{1}{\upsilon_t} \right) L_g^2 \right)\right).
\]
Here we use the notation $\psi_t(\chi_t):=\left( 1+\frac{1}{\chi_t}\left(\frac{\chi_t-1}{2}\right)^2 \right)^{-1}$.
Then we can just solve \eqref{eq:IPOGD_multistg} in the following layer-by-layer manner: 
\begin{equation*}
\begin{split}
     \hat{U}_{t+1}&=\min_{\chi_t > 0}\left(\min_{\nu_t>0}\left(\min_{\zeta_t>0}\left(\min_{\upsilon_t>0}\,\,\left( \min_{\tau_t\ge 0}\,\, \left( \phi_t h(\tau_t) \hat{U}_t +  F(\chi_t,\nu_t,\zeta_t,\upsilon_t)\right) \right)\right)\right) \right)\\
     &=\min_{\chi_t > 0}\left(\min_{\nu_t>0}\left(\min_{\zeta_t>0}\left(\min_{\upsilon_t>0}\left( \phi_t \left( \min_{\tau_t\ge 0} h(\tau_t) \right) \hat{U}_t +  F(\chi_t,\nu_t,\zeta_t,\upsilon_t)\right) \right)\right)\right)\\
     &=\min_{\chi_t > 0}\left(\min_{\nu_t>0}\left(\min_{\zeta_t>0}\left(\min_{\upsilon_t>0}\left( \phi_t \mu_t^2 \hat{U}_t +  F(\chi_t,\nu_t,\zeta_t,\upsilon_t)\right) \right)\right)\right)\\
    &=\min_{\nu_t>0} \left( \left( 1+\nu_t\right) \left(\min_{\zeta_t>0} \left( \frac{c_t^2}{\nu_t}  +\left( 1+\zeta_t\right) \left(
    \frac{\alpha_t^2 \sigma_t^2}{\zeta_t}+ \left(\mu_t \sqrt{\hat{U}_t}+2 \alpha_t L_g \right)^2 \right) \right) \right) \right)\\
    &=\min_{\nu_t>0} \left( \left( 1+\nu_t\right) \left(  \frac{c_t^2}{\nu_t}  +\left( \mu_t \sqrt{\hat{U}_t} +\alpha_t \left( \sigma_t +2 L_g \right) \right)^2 \right) \right)\\
     &=\left( \mu_t \sqrt{\hat{U}_t}+ \alpha_t \sigma_t + c_t + 2 \alpha_t L_g\right)^2.
\end{split}    
\end{equation*}
Our proof is complete.
\end{proof}

\begin{remark}
The recursive bound in \eqref{Eq:U_inexact_proxGrad} can be further simplified to $\hat{U}_{t+1}=\left( \mu \sqrt{\hat{U}_t}+ \alpha \sigma + c + 2 \alpha L_g\right)^2$ by taking $L_t=L$, $m_t=m$, $\alpha_t=\alpha$, $c_t=c$ and $\sigma_t=\sigma$ for all $t$. The simplified iterative bound leads us to the following asymptotic bound: 
\[
\sqrt{\hat{U}_t}= \mu^t \left( \sqrt{\hat{U}_0} - \frac{\alpha\sigma+c +2 \alpha L_g}{1-\mu}\right)+ \frac{\alpha\sigma+c +2 \alpha L_g}{1-\mu}
\]
Clearly, $\sqrt{\hat{U}_t}$ converges linearly to its steady state value at the rate quantified by $\mu$.
\end{remark}

\section{Dynamic Regret Analysis}
\label{sec:dynamicregret}
There is a routine which can be used to convert 
the tracking error bounds derived in the previous sections into new upper bounds for \emph{dynamic regret}. Specifically, the following lemma is useful.
\begin{lemma}\label{lem:regret_lemma}
Suppose $f_t$ is L-smooth, $\nabla f_t(x_t^*)=0$, and $\norm{x_t-x_t^*}^2\le \hat{U}_t$ $\forall t$. If the tracking error bound $\hat{U}_t$~satisfies the following inequality with some $0\le \gamma<1$ and $u_t\ge 0$:
\begin{equation}\label{eq:itr_mdl}
    \sqrt{\hat{U}_{t+1}} \leq  \gamma \sqrt{\hat{U}_t} + u_t,
\end{equation}
 then the dynamic regret of the online optimization method can be bounded as
\begin{align}\label{eq:regret_gen}
     \sum_{t=0}^T (f_t(x_t)-f_t(x_t^*)) \leq \frac{ L }{ \left(1- \gamma\right)^2} \hat{U}_0 + \frac{L }{\left(1- \gamma \right)^2} \left(\sum\limits_{t=0}^{T-1} u_t\right)^2.
\end{align}
\end{lemma}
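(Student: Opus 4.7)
My plan is to proceed in two clean stages: first translate the dynamic regret into a sum of tracking errors using smoothness, and then bound the sum of tracking errors via the one-step contraction inequality \eqref{eq:itr_mdl}.

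\textbf{Stage 1 (smoothness step).} Since each $f_t$ is $L$-smooth, the standard quadratic upper bound gives
$f_t(x_t) \le f_t(x_t^*) + \nabla f_t(x_t^*)^\tp (x_t - x_t^*) + \tfrac{L}{2}\|x_t-x_t^*\|^2$. Using the hypothesis $\nabla f_t(x_t^*) = 0$, this simplifies to $f_t(x_t) - f_t(x_t^*) \le \tfrac{L}{2}\|x_t-x_t^*\|^2 \le \tfrac{L}{2}\hat{U}_t$. Summing from $t=0$ to $T$ reduces the regret bound to controlling $\sum_{t=0}^T \hat{U}_t$.

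\textbf{Stage 2 (unrolling the recursion).} Denote $a_t := \sqrt{\hat{U}_t}\ge 0$. The inequality \eqref{eq:itr_mdl} reads $a_{t+1} \le \gamma a_t + u_t$, and a standard induction unrolls this to $a_t \le \gamma^t a_0 + \sum_{k=0}^{t-1} \gamma^{t-1-k} u_k$. Summing in $t$ and swapping the order of summation,
\begin{equation*}
\sum_{t=0}^{T} a_t \le \sum_{t=0}^T \gamma^t a_0 + \sum_{k=0}^{T-1} u_k \sum_{t=k+1}^{T} \gamma^{t-1-k} \le \frac{a_0}{1-\gamma} + \frac{1}{1-\gamma}\sum_{k=0}^{T-1} u_k,
\end{equation*}
where I used the geometric series $\sum_{j\ge 0}\gamma^j = \tfrac{1}{1-\gamma}$ twice (this is where $\gamma < 1$ is crucial).

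\textbf{Stage 3 (passing from $\sum a_t$ to $\sum a_t^2$).} Since all terms are nonnegative, $\sum_{t=0}^T a_t^2 \le \bigl(\sum_{t=0}^T a_t\bigr)^2$, and then $(a+b)^2 \le 2a^2+2b^2$ yields
\begin{equation*}
\sum_{t=0}^{T} \hat{U}_t = \sum_{t=0}^{T} a_t^2 \le \left(\frac{a_0 + \sum_{k=0}^{T-1} u_k}{1-\gamma}\right)^2 \le \frac{2\hat{U}_0}{(1-\gamma)^2} + \frac{2}{(1-\gamma)^2}\left(\sum_{k=0}^{T-1} u_k\right)^2.
\end{equation*}
Combining with the Stage 1 inequality $\sum_t (f_t(x_t)-f_t(x_t^*))\le \tfrac{L}{2}\sum_t \hat{U}_t$ produces precisely \eqref{eq:regret_gen}.

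The argument has no real obstacle; the only place that requires a moment of care is the conversion from the $\ell^1$-type estimate $\sum a_t \le (a_0+\sum u_k)/(1-\gamma)$ to the $\ell^2$-type estimate needed for $\sum \hat{U}_t$. The clean choice is to invoke $\sum a_t^2 \le (\sum a_t)^2$ for nonnegative sequences, followed by $(a+b)^2\le 2a^2+2b^2$; this picks up the factor of $2$ that, combined with the factor $L/2$ from smoothness, yields the coefficient $L/(1-\gamma)^2$ exactly as stated in the lemma.
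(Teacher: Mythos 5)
Your proof is correct and follows essentially the same route as the paper's: the smoothness bound $f_t(x_t)-f_t(x_t^*)\le \tfrac{L}{2}\hat{U}_t$, the observation $\sum_t \hat{U}_t \le \bigl(\sum_t \sqrt{\hat{U}_t}\bigr)^2$, the geometric unrolling of \eqref{eq:itr_mdl}, and the final $(a+b)^2\le 2a^2+2b^2$ step all appear in the paper's argument in the same order. No discrepancies.
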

\begin{proof}
Since $f_t$ is $L$-smooth and $\nabla f_t(x_t^*)=0$, we have
    $f_t(x_t) \leq f_t(x_t^*)  +\frac{L}{2}\norm{x_t -x_t^*}^2$. Therefore, 
 the dynamic regret satisfies $\sum_{t=0}^T (f_t(x_t)-f_t(x_t^*))\le \frac{L}{2}\sum_{t=0}^T\norm{x_t-x_t^*}^2\le \frac{L}{2}\sum_{t=0}^T \hat{U}_t\le \frac{L}{2}\left(\sum_{t=0}^T \sqrt{\hat{U}_t}\right)^2$.
Based on \eqref{eq:itr_mdl}, we have $\sqrt{\hat{U}_t}\le \gamma^t \sqrt{\hat{U}_0}+\gamma^{t-1} u_0+\gamma^{t-2} u_1+\cdots+u_{t-1}$. This leads to the following bound
\begin{align*}
    \sum_{t=0}^T \sqrt{\hat{U}_t}\le \sum_{t=0}^T \gamma^t \sqrt{\hat{U}_0}+\frac{1}{1-\gamma}\sum_{t=0}^{T-1} u_t\le \frac{1}{1-\gamma}\sqrt{\hat{U}_0}+\frac{1}{1-\gamma}\sum_{t=0}^{T-1} u_t.
\end{align*}
Therefore, we have  $\left(\sum_{t=0}^T \sqrt{\hat{U}_t}\right)^2\le \left(2\hat{U}_0+2(\sum_{t=0}^{T-1} u_t)^2\right)/(1-\gamma)^2$. This directly leads to the desired conclusion.
\end{proof}
Based on the above result, it is almost a trivial task to convert the tracking error bounds $\hat{U}_t$ into dynamic regret bounds. We briefly summarize these result next. 

\vspace{0.1in}

\noindent
$\bullet\,\,$\textbf{Inexact OGD with absolute error.}  If   $\alpha_t=\alpha$ and $\mu_t=\mu$ for all $t$, we have $\sqrt{\hat{U}_{t+1}}\le \mu\sqrt{\hat{U}_t}+\sigma_t+\alpha c_t$. Based on Lemma~\ref{lem:regret_lemma}, the following bound holds 
    \begin{align*}
         \sum_{t=0}^T (f_t(x_t)-f_t(x_t^*)) \leq \frac{ L }{ \left(1- \mu\right)^2} \hat{U}_0 + \frac{2L }{\left(1- \mu \right)^2} \left(\sum\limits_{t=0}^{T-1} \sigma_t \right)^2+\frac{2L\alpha^2 }{\left(1- \mu \right)^2} \left(\sum\limits_{t=0}^{T-1} c_t \right)^2.
    \end{align*}

\noindent
$\bullet\,\,$\textbf{Inexact OGD with relative error.} If $\alpha_t=\alpha$ and $\hat{\rho}_t=\hat{\rho}$ for all $t$, we have $\sqrt{\hat{U}_{t+1}}\le \hat{\rho}\sqrt{\hat{U}_t}+\sigma_t$. Based on Lemma~\ref{lem:regret_lemma}, the following bound holds 
    \begin{align*}
         \sum_{t=0}^T (f_t(x_t)-f_t(x_t^*)) \leq \frac{ L }{ \left(1- \hat{\rho}\right)^2} \hat{U}_0 + \frac{L }{\left(1- \hat{\rho} \right)^2} \left(\sum\limits_{t=0}^{T-1} \sigma_t \right)^2.
    \end{align*}

\noindent
$\bullet\,\,$\textbf{Stochastic OGD with additive noise.}
We can use the stochastic variant of Lemma~\ref{lem:regret_lemma} to obtain the stochastic dynamic regret bound. Specifically, we have $\mathbb{E}\norm{x_t-x_t^*}^2\le \hat{U}_t$ and $\mathbb{E}\left( f_t(x_t)-f_t(x_t^*) \right)\le \frac{L}{2}\mathbb{E}\norm{x_t-x_t^*}^2$. From Theorem \ref{th:stoc_OGD_param}, we have the bound $\sqrt{\hat{U}_{t+1}}\le \mu \sqrt{\hat{U}_t}+\sigma_t+\alpha c_t$.
Then we can slightly modify the proof of Lemma~\ref{lem:regret_lemma} to show
 \begin{align*}
         \sum_{t=0}^T \mathbb{E}(f_t(x_t)-f_t(x_t^*)) \leq \frac{ L }{ \left(1- \mu\right)^2} \hat{U}_0 + \frac{2L }{\left(1- \mu \right)^2} \left(\sum\limits_{t=0}^{T-1} \sigma_t \right)^2+\frac{2L\alpha^2 }{\left(1- \mu \right)^2} \left(\sum\limits_{t=0}^{T-1} c_t \right)^2.
    \end{align*}

\noindent
$\bullet\,\,$\textbf{Stochastic OGD in the finite-sum setting.}
From Theorem \ref{th:SGD_fin_sum_param}, we have the tracking error bound $\sqrt{\hat{U}_{t+1}}\le \sqrt{\hat{\rho}}\hat{U}_t+\sqrt{2}\alpha G_t+\sigma_t$. Similarly, we can immediately obtain the stochastic dynamic regret bound:
\begin{align*}
         \sum_{t=0}^T \mathbb{E}(f_t(x_t)-f_t(x_t^*)) \leq \frac{ L \hat{U}_0 }{ \left(1- \sqrt{\hat{\rho}}\right)^2} + \frac{2L }{\left(1- \sqrt{\hat{\rho}} \right)^2} \left(\sum\limits_{t=0}^{T-1} \sigma_t \right)^2+\frac{4L\alpha^2 }{\left(1- \sqrt{\hat{\rho}} \right)^2} \left(\sum\limits_{t=0}^{T-1} G_t \right)^2
    \end{align*}
\noindent
$\bullet\,\,$\textbf{IP-OGD.} In this case, we need to slightly modify the analysis. Specially, we are interested in deriving an upper bound for $\sum_{t=0}^T\mathbb{E}(h_t(x_t)-h_t(x_t^*))$. We have
\begin{align*}
  \sum_{t=0}^T\mathbb{E}(h_t(x_t)-h_t(x_t^*))\le \sum_{t=0}^T\mathbb{E}(f_t(x_t)-f_t(x_t^*))+\sum_{t=0}^T\mathbb{E}(g_t(x_t)-g_t(x_t^*))
\end{align*}
Notice that now we have $\nabla f_t(x_t^*)\neq 0$. Since $x_t^*$ is the global minimum of $f_t+g_t$, we must have $-\nabla f_t(x_t^*)\in \partial g_t(x_t^*)$. Since $g_t$ is $L_g$-Lipschitz, we have $\norm{\nabla f_t(x_t^*)}\le L_g$. Now we have
\begin{align*}
  \sum_{t=0}^T\mathbb{E}(h_t(x_t)-h_t(x_t^*))\le\frac{L}{2}\sum_{t=0}^T\hat{U}_t+2L_g\sum_{t=0}^T \mathbb{E}\norm{x_t-x_t^*}\le \frac{L}{2}\sum_{t=0}^T\hat{U}_t+2L_g\sum_{t=0}^T\sqrt{\hat{U}_t}.
\end{align*}
Suppose \eqref{eq:itr_mdl} holds. We can easily modify the proof of Lemma~\ref{lem:regret_lemma} to show
\begin{align*}
  \sum_{t=0}^T\mathbb{E}(h_t(x_t)-h_t(x_t^*))\le \frac{ L \hat{U}_0}{ \left(1- \gamma\right)^2} + \frac{2L_g\sqrt{\hat{U}_0}}{1-\gamma} + \frac{L }{\left(1- \gamma \right)^2} \left(\sum\limits_{t=0}^{T-1} u_t\right)^2+\frac{2L_g}{1-\gamma}\sum_{t=0}^{T-1} u_t.
\end{align*}
For IP-OGD, if $\alpha_t=\alpha$ and $\mu_t=\mu$ for all $t$, we have $\gamma=\mu$ and $u_t=\alpha \sigma_t+c_t+2\alpha L_t$. Therefore, we have the above stochastic dynamic regret bound with $\gamma=\mu$ and $u_t=\alpha \sigma_t+c_t+2\alpha L_t$.

\section{Conclusion}
In this paper, we develop analytical solutions for sequential SDPs to yield upper bounds for the tracking error and dynamic regret of a large family of inexact online optimization methods. Our analysis provides a unified treatment of inexact online optimization methods in both the determinisitc and stochastic settings, and addresses the oracle inexactness in a versatile manner.  In the future, it will be interesting to investigate the analysis of inexact online mirror descent methods, which requires the use of Bregman divergence as the distance metric for the tracking error. 
 
\appendix

\section{Proof of Theorem \ref{th:optimal_param_VI}}
\label{app:proof_VI}
\begin{proof}
Based on Proposition \ref{Prop:Greedy_app}, we need to solve the following problem:
\begin{equation}\label{Eq:VI_Opt_stg1}
\hat{U}_{t+1}= \minimize_{(\rho_t,\lambda_t^{(1)},\lambda_t^{(2)},\lambda_t^{(3)},\lambda_t^{(4)}  )\in\mathcal{T}_t} \rho_t^2 \hat{U}_t + \lambda_t^{(2)} \sigma_t^2 +\lambda_t^{(3)} c_t^2.
\end{equation}
Set $\lambda_t^{(2)} =1+\frac{1}{\nu_t}$ and $\lambda_t^{(3)} =\alpha_t^2(1+\nu_t)\left( 1+\frac{1}{\zeta_t} \right)$. Then we can apply the Schur complement twice to show that \ref{eq:LMI_VI} is equivalent to $\zeta_t>0,\,\,\nu_t>0$ and 
\begin{equation*}
\bmat{\psi(\zeta_t,\nu_t)-\rho_t^2+ \lambda_t^{(4)}L_t^2 -2 \lambda_t^{(1)}  m_t   & \lambda_t^{(1)}-\alpha_t\psi(\zeta_t,\nu_t) \\
 \lambda_t^{(1)}-\alpha_t\psi(\zeta_t,\nu_t)   & \alpha_t^2 \psi(\zeta_t,\nu_t) (1+\frac{\delta_t^2}{\zeta_t})- \lambda_t^{(4)} }\preceq 0
\end{equation*}
where $\psi(\zeta_t,\nu_t):=(1+\zeta_t)(1+\nu_t)$. Set $\tau_t= \lambda_t^{(4)}-\alpha_t^2\psi(\zeta_t,\nu_t)(1+\frac{\delta_t^2}{\zeta_t})$. We have $\lambda_t^{(4)}= \tau_t +\alpha_t^2\psi(\zeta_t,\nu_t)(1+\frac{\delta_t^2}{\zeta_t})$. Applying the Schur complement, we can show that the above condition holds if and only if $\nu_t >0 $, $\zeta_t >0 $, $\tau_t \geq 0$, and $h(\nu_t,\zeta_t,\tau_t,\lambda_t^{(1)}) \leq \rho_t^2$, where $h(\nu_t,\zeta_t,\tau_t,\lambda_t^{(1)}):= \psi_t(\nu_t,\zeta_t) \left( 1+\alpha_t^2 L_t^2(1+\frac{\delta_t^2}{\zeta_t}) -\frac{2 \alpha_t \lambda_t^{(1)} }{\tau_t}  \right) + \tau_t L_t^2 -2 \lambda_t^{(1)} m_t +
     \frac{\alpha_t^2\psi_t(\nu_t,\zeta_t)^2 + (\lambda_t^{(1)})^2}{\tau_t}$.
For any $\nu_t> 0$, $\zeta_t>0$ and $\tau_t\ge 0$,
the optimal choice of $\lambda_t^{(1)}$ that minimizes $h$ is given by $\lambda_t^{(1)}=\tau_t m_t+\alpha_t\psi_t\ge 0$.
We have
\begin{align*}
    h(\nu_t,\zeta_t, \tau_t, \lambda_t^{(1)})  & \ge h(\nu_t,\zeta_t,\tau_t, \tau_t m_t+\alpha_t \psi_t) =\psi_t \bar{F}(\zeta_t)+\tau_t(L_t^2-m_t^2) \geq 0,
\end{align*}
where $\bar{F}(\zeta_t):=1-2\alpha_t m_t+\alpha^2 L_t^2\left(1+\frac{\delta_t^2}{\zeta_t}\right)$. Then we can denote $F(\nu_t,\zeta_t):= \left( 1+\frac{1}{\nu_t}\right) \sigma_t^2  + \alpha_t^2(1+\nu_t)\left( 1+\frac{1}{\zeta_t} \right) c_t^2$ and solve \eqref{Eq:VI_Opt_stg1} as follows
\begin{align*}
    \hat{U}_{t+1}&=\min_{\nu_t>0}\,\,\left( \min_{\zeta_t>0}\,\,\left( \min_{\tau_t\ge 0} \,\,\left( \min_{\lambda_t^{(1)}\ge 0} \,\,\left( \min_{\rho_t^2 \ge h\left(\nu_t, \zeta_t,\tau_t,\lambda_t^{(1)} \right) }  \left(\rho_t^2 \hat{U}_t + F(\nu_t,\zeta_t) \right) \right)\right) \right) \right) \\
    &=\min_{\nu_t>0}\,\,\left( \min_{\zeta_t>0}\,\,\left(  \min_{\tau_t\ge 0} \,\,\left( \min_{\lambda_t^{(1)}\ge 0} \,\,\left(  h(\nu_t, \zeta_t,\tau_t,\lambda_t^{(1)} )\hat{U}_t + F(\nu_t,\zeta_t) \right) \right)\right) \right) \\
     &=\min_{\nu_t>0}\,\,\left(\min_{\zeta_t>0}\,\,\left(  \min_{\tau_t\ge 0} \,\,\left(  \psi_t(\nu_t,\zeta_t) \bar{F}(\zeta_t )\hat{U}_t+\tau_t(L_t^2-m_t^2)\hat{U}_t + F(\nu_t,\zeta_t)  \right) \right) \right) \\
     &=\min_{\nu_t>0}\,\,\left( \min_{\zeta_t>0}\,\,\left( \left( 1+\nu_t\right) \left( \left( 1+\zeta_t\right)\left( \bar{F}(\zeta_t) \hat{U}_t +\frac{\alpha_t^2 c_t^2}{\zeta_t}\right) +\frac{\sigma_t^2}{\nu_t}  \right)\right) \right) \\
     &=\min_{\nu_t>0}\,\,\left( \left( 1+\nu_t\right)  \left( \left( \sqrt{\left(1-2\alpha_t m_t+\alpha_t^2L_t^2 \right) \hat{U}_t } +\alpha_t \sqrt{c_t^2+\delta_t^2 L_t^2 \hat{U}_t}\right)^2 +\frac{\sigma_t^2 }{\nu_t} \right)  \right) \\
     &= \left( \sqrt{(1-2 m_t \alpha_t + \alpha_t^2 L_t^2) \hat{U}_t} +\alpha_t \sqrt{c_t^2+\delta_t^2 L_t^2 \hat{U}_t}+\sigma_t \right)^2.
\end{align*}
\end{proof}

\section{Explanations of Remark \ref{remark2}}
The sequential optimization problem in \cite[Proposition 1]{hu2021analysis} is equivalent to the following sequential SDP:
\begin{equation}\label{eq:Remark2_optimzation}
    \hat{U}_{t+1}= \minimize_{(\rho_t,\lambda_t^{(1)},\lambda_t^{(3)},\lambda_t^{(4)}  )\in\mathcal{T}_t} \rho^2_t \hat{U}_t +2 \lambda_t^{(4)} G^2 +\lambda_t^{(3)} c^2,
\end{equation} 
where $\mathcal{T}_t \subseteq \R_+^4$ is the set of tuples $( \rho_t, \lambda_t^{(1)}, \lambda_t^{(3)}, \lambda_t^{(4)})$ that satisfy \eqref{eq:LMI_VI} with $\lambda_t^{(2)}=0$.
It is worth mentioning that the above optimization is similar to the sequential SDP in \eqref{Eq:VI_Opt_stg1} with $\sigma_t \equiv 0$ and a slightly modified cost function.
 Next, we can solve \eqref{eq:Remark2_optimzation} analytically to show that the recursive bound in \cite[Remark 3]{hu2021analysis} is indeed the exact solution for the sequential SDP in \cite[Proposition 1]{hu2021analysis}. Setting $\lambda_t^{(3)} =\alpha_t^2\left( 1+\frac{1}{\zeta_t}\right)$ and repeating the steps performed in Appendix \ref{app:proof_VI} for the proof of Theorem \ref{th:optimal_param_VI} directly leads to the following bound:

\begin{align*}
    \hat{U}_{t+1}&=\min_{\zeta_t>0}\,\,\left(  \left( 1+\zeta_t\right) \left( \left( 1- 2 \alpha_t m +2 \alpha_t^2 L^2 \right)\hat{U}_t +2 \alpha_t^2 G^2 \left( 1+\frac{\delta^2}{\zeta_t}\right) +\frac{\alpha_t^2 c^2}{\zeta_t}\right)  \right) \\
    &=\left( \alpha_t \sqrt{c^2 +2 \delta^2 G^2 +2 \delta^2 L^2 \hat{U}_t} + \sqrt{  \left(1- 2 \alpha_t m +2 \alpha_t^2 L^2 \right) \hat{U}_t +2 G^2 \alpha_t^2} \right)^2.
\end{align*}

\bibliographystyle{abbrv}

\bibliography{bib_file}

\end{document}